\documentclass[12pt,reqno]{amsart}
\usepackage[headings]{fullpage}
\usepackage{amssymb,amsmath,amscd,tikz,mathtools}
\usepackage{graphicx}
\usepackage{texdraw}
\usepackage[all,cmtip]{xy}
\usepackage{url}
\usepackage[bookmarks=true,%
    colorlinks=true,%
    linkcolor=blue,%
    citecolor=blue,%
    filecolor=blue,%
    menucolor=blue,%
    urlcolor=blue,%
    breaklinks=true]{hyperref}

\usepackage{slashed}
\usepackage{tikz-cd}
\usepackage{stmaryrd}
\usepackage{adjustbox}

\usepackage{verbatim}
\usetikzlibrary{hobby}
\usetikzlibrary{decorations.pathmorphing}
\tikzset{snake it/.style={decorate, decoration=snake}}
\usetikzlibrary{patterns}

\newtheorem{theorem}{Theorem}[section]
\theoremstyle{definition}
\newtheorem{proposition}[theorem]{Proposition}
\newtheorem{lemma}[theorem]{Lemma}
\newtheorem{definition}[theorem]{Definition}
\newtheorem{remark}[theorem]{Remark}
\newtheorem{corollary}[theorem]{Corollary}
\newtheorem{conjecture}[theorem]{Conjecture}

\newtheorem{example}[theorem]{Example}

\def\BZ{\mathbb Z}
\def\BQ{\mathbb Q}

\def\BC{\mathbb C}
\def\BK{\mathbb K}

\def\calA{\mathcal A}

\def\calS{\mathcal S}

\def\calL{\mathcal L}

\def\calH{\mathcal H}

\def\calR{\mathcal R}
\def\calO{\mathcal O}
\def\s{\sigma}

\def\Res{\mathrm{Res}}

\def\a{\alpha}

\def\d{\delta}
\def\ve{\varepsilon}
\def\th{\theta}

\def\Li{\mathrm{Li}}

\def\be{\begin{equation}}
\def\ee{\end{equation}}

\def\z{\zeta}

\def\diag{\mathrm{diag}}

\usepackage{accents}

\def\={\;=\;}
\def\SU{\mathrm{SU}}
\def\ZHS{\BZ\mathrm{HS}}
\def\QHS{\BQ\mathrm{HS}}

\def\vphi{\varphi}
\def\loop{\mathrm{loop}}

\def\fsl{\mathfrak{sl}}
\def\End{\mathrm{End}}

\def\fb{\mathfrak{b}}
\def\tr{\mathrm{tr}}
\def\trim{\mathrm{trim}}

\begin{document}


\title[A lift of the colored Jones polynomial of a knot]{
  A lift of the colored Jones polynomial of a knot}

\author{Stavros Garoufalidis}
\address{
  International Center for Mathematics, Department of Mathematics \\
  Southern University of Science and Technology \\
  Shenzhen, China \newline
  {\tt \url{http://people.mpim-bonn.mpg.de/stavros}}}
\email{stavros@mpim-bonn.mpg.de}

\author{Campbell Wheeler}
\address{Institut des Hautes Études Scientifiques\\ \newline
         35 rte de Chartres, 91440 Bures-sur-Yvette, France \newline
         {\tt \url{https://www.ihes.fr/~wheeler/}}}
\email{wheeler@ihes.fr}

\thanks{
  {\em Key words and phrases:}
  knots, Habiro ring, Habiro ring of an \'etale map, TQFT, Chern--Simons
  theory, Jones polynomial, colored Jones polynomial, $q$-hypergeometric sums,
  Witten-Reshetikhin-Turaev invariants, 3-manifolds, Betti number 1.
}

\date{27 February 2026}

\begin{abstract}
  Habiro lifted the Witten--Reshetikhin--Turaev invariant
  of an integer homology 3-sphere (a complex-valued function on the set
  of complex roots of unity) to an element of the Habiro ring.
  We lift the colored Jones polynomial of a knot, with Alexander polynomial
  $\Delta(t)$, to the recently introduced Habiro ring of the \'etale map
  $\BZ[t^{\pm 1}]\to \BZ[t^{\pm 1},\Delta(t)^{-1}]$ (with Frobenius lifts $t\mapsto t^p$ for all primes $p$). This implies the existence
  of a loop expansion at roots of unity (confirming a conjecture of Habiro),
  and a lift of power series invariants of Ohtsuki for 3-manifolds with Betti number 1
  to a Habiro ring. Our results have natural extensions to the skein module of
  a knot complement, and they  suggest a natural lift of the colored Jones polynomial
  colored by representations of a simple Lie (super) algebra. 
\end{abstract}

\maketitle

{\footnotesize
\tableofcontents
}


\section{Introduction}
\label{sec.intro}

\subsection{TQFT and the Habiro ring}
\label{sub.comp}

The discovery of the Jones polynomial of a knot or a link in $S^3$ by
V. Jones~\cite{Jones} and its interpretation as an observable of a topological
quantum field theory (a gauge theory with action the Chern-Simons function)
by Witten~\cite{Witten:CS} led in the notion of a topological quantum field theory
(in short, TQFT) by Atiyah~\cite{Atiyah}.

TQFT is a fruitful way to organize complex information about manifolds in dimensions
0,1,2 and 3, and known constructions use for instance a compact Lie group (such
as $\SU(2)$) and a complex root of unity $\z$. TQFTs with such data were constructed
originally by Reshetikhin--Turaev~\cite{RT:ribbon}, and subsequently by many authors.

These theories assign numerical invariants to closed 3-manifolds, finite-dimensional
vector spaces to closed surfaces, categories to closed 1-manifolds that satisfy rather
elaborate cut-and-paste properties that also allow manifolds with boundaries and
corners.

In particular, the WRT invariants of a closed 3-manifold is a complex-valued function
defined on the set $\mu$ of complex roots of unity.
At the time, it was remarked by Reshetikhin--Turaev that they did not know if their
invariants were the evaluation of a polynomial at roots of unity, like for the
Jones polynomial. A key discovery of Habiro~\cite{Habiro:WRT} was that this was not
the case and instead he found a lift of the WRT invariant $Z_M(\z)$
of an integer homology 3-sphere $M$ to an element $f_M(q)$ of the Habiro ring
\be
\label{habdef}
\calH \= \varprojlim_n \;\BZ[q]/(q;q)_n\BZ[q]
\ee
(where $(q;q)_n=(1-q) \dots (1-q^n)$ is the $q$-Pochhammer symbol), such that
for every $\z \in \mu$, we have $Z_M(\z)=f_M(\z)$.

Elements of the Habiro ring can be evaluated not only at roots of unity, but also
near roots of unity leading to homomorphisms 
\be
\label{f2fm}
f(q) \in \calH \to f_m(q-\z_m) \in \BZ[\z_m][\![q-\z_m]\!]
\ee
where $(\z_m)_{m \geq 1}$ is a compatible collection of complex roots of unity 
of order $m$, that is a collection satisfying
\be
\label{zmdef}
\z_{mm'} \=\z_m\z_{m'}, \qquad (m,m')\=1, \qquad
(\z_{p^r})^p \=\z_{p^{r-1}}
\ee
for all positive integers $m$ and $m'$, primes $p$ and positive integers $r$.

The family of homomorphisms given in~\eqref{f2fm} satisfy a $p$-adic gluing property,
namely 
\be
\label{gluefmp}
f_m(q-\z_{pm}+\z_{pm}-\z_m)\=f_{pm}(q-\z_{pm}) \in\BZ_p[\z_{pm}][\![q-\z_{pm}]\!],
\qquad (m \in \BZ_{>0}, \,\, p \,\, \text{prime})
\ee
where $\BZ_p=\lim_n \BZ/(p^n)$ is the ring of $p$-adic integers.
Equation~\eqref{gluefmp} makes sense because of the crucial fact that, for a suitable choice of $\{\z_m\}_m$, we have
$|\z_{pm}-\z_m|_p<1$ where $|\cdot|_p$ denotes the $p$-qdic norm on $\BQ_p$. 

Putting everything together, Habiro proves in~\cite{Habiro:completion}
that there is an isomorphism $\calH \cong \calH_\BZ$ where 
\be
\label{habdefZ}
\begin{aligned}
  \calH_\BZ
  \= \Big\{&(f_m(q-\z_m))_m\;\in\;\prod_{m\geq 1}
  \BZ[\z_m][\![q-\z_m]\!]\;\Big|\; \eqref{gluefmp} \,\, \text{holds} \Big\} \,.
\end{aligned}
\ee

\subsection{Rings for TQFT invariants}
\label{sub.rings}

Recently, motivated by arithmetic properties of perturbative Chern--Simons theory,
a Habiro ring for an \'etale $\BZ[t^{\pm1}]$-algebra $R$ was implicitly discussed in
~\cite{GSWZ}, introduced in Scholze's course~\cite{Scholze}
and further studied in~\cite{GW:explicit}. The definition of $\calH_{R/\BZ[t^{\pm1}]}$
is similar to~\eqref{habdefZ} where we replace $\BZ$ by $R$, $\BZ_p$ by
$R_p=\lim_n R/(p^n)$, and crucially, we insert in the right hand side of
Equation~\eqref{gluefmp} the $p$-Frobenius $\vphi_p$ with $\vphi_p(t)=t^p$.

\begin{definition}
\label{def.habR}  
The Habiro ring of $R/\BZ[t^{\pm1}]$, is defined to be
\be
\begin{aligned}
  \calH_{R}
  \=\Big\{&(f_m)_m\;\in\;\prod_{m\geq 1}R[\z_m,t^{1/m}][\![q-\z_m]\!]
  \;\Big|\;\text{for all }m\in\BZ_{>0}\text{ and }p \,\text{ prime}\\
  &f_m(x,q-\z_{pm}+\z_{pm}-\z_m)\=\varphi_pf_{pm}(x^p,q-\z_{pm})\Big\}\,,
\end{aligned}
\ee
where $\varphi_p(q)=q$ and $\varphi_p(\z)=\z$.
\end{definition}
The action $\vphi$ of the Frobenius makes $\calH_{R}$ an interesting and nontrivial
ring.
This is also where perturbative Chern--Simons invariants naturally live~\cite{GSWZ},
as opposed to the completion $\varprojlim_n \;R[q]/(q;q)_nR[q]$.

These rings are also fundamentally different from the cyclotomic expansion
\be
\varprojlim_n \BZ[q^{\pm 1},t^{\pm1}]/((t^{-1} q^a;q)_n  \, | \, a \in \BZ) \,.
\ee
Indeed, they contain strictly more information as can be seen from the following:

\begin{lemma}
\label{lem.unique}
If the ring homomorphism $\mathrm{ev}_{t=1}:\BZ[t^{\pm1}]\to\BZ$ extends to a map $\mathrm{ev}_{t=1}:R\to\BZ$, then there is an injective map
\be
\label{eq.unique}
  \calH_{R/\BZ[t^{\pm 1}]}
  \hookrightarrow
  \varprojlim_n
  \calH_{\BZ}[t^{\pm1}]/(t^{-1};q)_n\,.
\ee
Moreover, if $f(t,q) \in \calH_{R/\BZ[t^{\pm 1}]}$
satisfies $f(q^{n-1},q) \in \BZ[q^{\pm 1}]$ for all $n>0$,
then its image is in the subring given by $\varprojlim_n
\BZ[t^{\pm1},q^{\pm1}]/(t^{-1};q)_n$.
\end{lemma}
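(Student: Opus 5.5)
The plan is to realise the right-hand side of~\eqref{eq.unique} as a ring of ``functions on the points $t=q^j$''. Since $(t^{-1};q)_n=\prod_{j=0}^{n-1}(1-t^{-1}q^j)$ vanishes exactly at $t=q^j$ for $0\le j<n$, Newton ($q$-)interpolation should identify $\calH_\BZ[t^{\pm1}]/(t^{-1};q)_n$ with the tuples $(g_0,\dots,g_{n-1})\in\calH_\BZ^n$ whose divided $q$-differences are $\calH_\BZ$-integral. The map in~\eqref{eq.unique} will then be
\[
f \;\mapsto\; \big(f(q^j,q)\big)_{j\ge 0},
\]
and the first task is to make sense of $f(q^j,q)\in\calH_\BZ$.

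\textbf{Step 1: specialisation.} Fix $m$ and $j$. Put $x=t^{1/m}$ and specialise $x\mapsto q^j$ in $f_m\in R[\z_m,t^{1/m}][\![q-\z_m]\!]$. Then $t=x^m\mapsto q^{jm}$, which is $\equiv 1 \bmod (q-\z_m)$. The residual map $t\mapsto 1$ extends to $R$ by the hypothesis $\mathrm{ev}_{t=1}$. Since $R$ is \'etale over $\BZ[t^{\pm1}]$, Hensel's lemma, or the infinitesimal lifting property of \'etale maps, gives a unique extension $R\to \BZ[\z_m][\![q-\z_m]\!]$ sending $t\mapsto q^{jm}$ and reducing to $\mathrm{ev}_{t=1}$. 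Applying it coefficientwise yields $g_{j,m}\in \BZ[\z_m][\![q-\z_m]\!]$.

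\textbf{Step 2: gluing.} I would check that $(g_{j,m})_m$ satisfies~\eqref{gluefmp}. The Frobenius twist in Definition~\ref{def.habR}, $f_m(x,\cdot)=\vphi_p f_{pm}(x^p,\cdot)$, is designed for this. At level $pm$ the variable is $x^p\mapsto q^{jp}$, so $t\mapsto q^{jpm}$ on both sides. Uniqueness of the Hensel lift should then show that the two specialisations of $R$ agree after $p$-adic completion, and that $\vphi_p$ is compatible with them. By Habiro's theorem $\calH\cong\calH_\BZ$, this gives $f(q^j,q)\in\calH_\BZ$.

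Next I would show that the sequence is compatible with the projective system. Concretely, the $n$-th Newton coefficient $\sum_k(\cdots)g_k/(q;q)_n$ must lie in $\calH_\BZ$. I would try to obtain this directly on the level of $f_m$: $f_m(x)$ is a power series in $t-1$ and $q-\z_m$, and divided $q$-differences of $t\mapsto f(t)$ in the $t$-direction are again such series.

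\textbf{Step 3: injectivity (main obstacle).} Suppose $f(q^j,q)=0$ for all $j$. I expect this step to be the hardest. First, the gluing relations should reduce the claim to $f_1=0$; if not, I would run the same argument at each $m$ separately. Via $\mathrm{ev}_{t=1}$, view $f_1$ in $\BZ[\![t-1,q-1]\!]$ (after completing $R$ at the point $t=1$). Write $t=q^{j}$, so $t-1=(1+(q-1))^j-1$. Expanding $f_1=\sum a_{k,l}(t-1)^k(q-1)^l$, the coefficient of $(q-1)^N$ in $f_1(q^j,q)$ is a polynomial in $j$. Its top-degree part in $j$ records $a_{k,N-k}$. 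Vanishing for all $j\in\BZ_{\ge0}$ forces these polynomials to vanish identically, and an induction on $N$ gives all $a_{k,l}=0$. One then still has to know that $R\to \BZ[\![t-1]\!]$ is injective, or handle separately the components of $\mathrm{Spec}\,R$ not meeting $t=1$. This is the delicate point, and I would try to use \'etaleness plus the gluing to move any such component to $t=1$.

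\textbf{Step 4: the integrality statement.} Assume $f(q^{n-1},q)\in\BZ[q^{\pm1}]$ for all $n$. The Newton coefficients are then elements of $\BQ(q)$ whose images lie in $\calH_\BZ$. They are Laurent polynomials divided by products of cyclotomic polynomials, and I would show that such elements of $\calH_\BZ$ lie in $\BZ[q^{\pm1}]$. The reason is that an element of $\calH$ is determined by, and is regular at, every root of unity, so the cyclotomic denominators must cancel. This places the image in $\varprojlim_n\BZ[t^{\pm1},q^{\pm1}]/(t^{-1};q)_n$.
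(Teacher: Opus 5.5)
\textbf{Gap: you do not prove that the Newton coefficients lie in $\calH_\BZ$.} Your map $f\mapsto (f(q^j,q))_j$ followed by Newton interpolation is the same map as the paper's, since $f(q^n,q)=\sum_{k\le n}(q^{-n};q)_k a_k$. Steps 1--2 are just the specialisations \eqref{maptn}; for $R=\BZ[t^{\pm1},\Delta(t)^{-1}]$ no Hensel argument is needed, because $\Delta(1)=1$ makes $\Delta(q^{jm})$ a unit in $\BZ[\z_m][\![q-\z_m]\!]$. In your ordering, however, the whole first assertion rests on showing that the Newton coefficients, a priori only in $\calH_\BZ[(q;q)_n^{-1}]$, lie in $\calH_\BZ$. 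That is exactly the step you leave at ``I would try''.

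As phrased, your heuristic also needs care. At level $m$ the nodes are $x=t^{1/m}=q^j$, since the $m$-th component of $t$ is $t^{1/m}$ (cf.\ Example~\ref{ex.fun}). So divided differences must be taken in $x$. Differences of $t=q^{jm}$ create poles: $(q^{jm}-q^{im})/(q^j-q^i)$ vanishes at $q=\z_m$ when $i\not\equiv j \pmod m$.

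The missing mechanism is the paper's operator $Tf=(f(t,q)-f(1,q))/(t-1)$:
\begin{itemize}
\item $f(1,q)\in\calH_\BZ$ by Proposition~\ref{prop:eval}.
\item Multiplying by $(t-1)^{-1}$, whose $m$-th component is $(t^{1/m}-1)^{-1}$ by Example~\ref{ex.fun}, gives an element of $\calH_{R[(t-1)^{-1}]}$, so the Frobenius gluing is inherited for free.
\item The components $(f_m(x)-f_m(1))/(x-1)$ have no pole at $x=1$. For a coefficient $P(x)/\Delta(x^m)^k$, the numerator $P(x)-P(1)\Delta(x^m)^k$ is divisible by $x-1$; this uses $\Delta(1)=1$.
\end{itemize}
Hence $Tf\in\calH_R$, and $f=a_0+(t^{-1}-1)f_1$ with $a_0\in\calH_\BZ$ and $f_1=-t\,Tf\in\calH_R$. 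Iterating on $f_1(qt,q)$ (the shift $\s$) gives $f=\sum_k (t^{-1};q)_k a_k$ with every $a_k\in\calH_\BZ$. These $a_k$ are precisely your Newton coefficients.

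\textbf{Step 3.} Your injectivity argument is sound: reduce to the $m=1$ component by Proposition~\ref{prop.hab11}, then use that $t\mapsto q^j$ for all $j$ is injective on $\BZ[\![t-1,q-1]\!]$. It in fact spells out what the paper only invokes in \eqref{injects}. The ``delicate point'' is empty for the paper's ring, since $\BZ[t^{\pm1},\Delta(t)^{-1}]\subset\BQ(t)$ embeds in $\BZ[\![t-1]\!]$ by Taylor expansion.

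For a general \'etale $R$, your plan to move components to $t=1$ cannot succeed. Take $R=\BZ[t^{\pm1}]\times\BZ[t^{\pm1},(t-1)^{-1}]$, which satisfies the hypothesis. Any $\mathrm{ev}_{t=1}\colon R\to\BZ$, and hence any of its lifts $t\mapsto q^{j}$, must factor through the first factor. So the nonzero idempotent $(0,1)\in\calH_R$ is killed by every such specialisation. The lemma therefore needs the paper's $R$ (or at least a domain), which is the only case the paper's proof treats.

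\textbf{Step 4.} This is the paper's argument. Your justification of $\tfrac{1}{(q;q)_n}\BZ[q^{\pm1}]\cap\calH_\BZ=\BZ[q^{\pm1}]$, which the paper only asserts, works provided you do two things. First, use the power-series expansion at each $\z_d$ to get vanishing of the numerator to order $\lfloor n/d\rfloor$, not just vanishing at $\z_d$. Second, apply Gauss's lemma for the monic cyclotomic polynomials $\Phi_d$.
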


\subsection{Our results}
\label{sub.results}

Recall that the colored Jones polynomial $J_{K,n}(q) \in \BZ[q^{\pm 1}]$ of a knot $K$
is a sequence of Laurent polynomials for $n \geq 1$ that, roughly speaking, encodes
the knot observed by the $(n+1)$-dimensional representation of $\SU(2)$,
or alternatively, by the Jones polynomial of a knot and its parallels~\cite{RT:ribbon}.

Our theorem lifts the colored Jones polynomial of a knot to an element
of the Habiro ring $\calH_{\BZ[t^{\pm 1}, \Delta_K(t)^{-1}]/\BZ[t^{\pm 1}]}$, where
$\Delta_K(t) \in \BZ[t^{\pm 1}]$ is the Alexander polynomial of $K$, which measures
the homology of the universal abelian cover of the knot complement~\cite{Alexander}.
By functoriality and the fact that $\Delta_K(1)=1$, one has a map 
\be
\label{maptn}
\calH_{\BZ[t^{\pm 1}, \Delta_K(t)^{-1}]/\BZ[t^{\pm 1}]} \to \calH_\BZ\,,
\qquad t \mapsto q^{n} \qquad (n \in \BZ) \,.
\ee

\begin{theorem}
\label{thm.1}
There is a map
\be
\label{JKtq}
J: \{ \text{Knots }K\text{ in} \,\, S^3\}
  \to \calH_{\BZ[t^{\pm 1}, \Delta_K(t)^{-1}]/\BZ[t^{\pm 1}]}\,,
\qquad K \mapsto J_K(t,q) 
\ee
satisfying
\be
\label{eq1}
J_K(q^{n-1},q) \= J_{K,n}(q) \in \BZ[q^{\pm 1}] \subset
\calH_\BZ \hookrightarrow \BZ[\![q-1]\!]
\qquad (\text{for all $n>0$}) \,.
\ee
\end{theorem}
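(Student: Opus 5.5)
We construct $J_K(t,q)$ from a state-sum formula for the colored Jones polynomial and then verify, one root of unity at a time, the gluing conditions of Definition~\ref{def.habR}.

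\textbf{Step 1: the state sum.} Present $K$ as the closure of a braid, or as a long-knot diagram. Use the $R$-matrix of $U_q(\fsl_2)$ in the Verma module of highest weight $t$, where $t$ plays the role of $q^{n-1}$. This gives, for each $m$ and each $\z_m$, a formal expression for $J_K(t,q)$ as a multi-sum of $q$-hypergeometric terms in $t$ and $q$. After the substitution $t=q^{n-1}$ the Verma module surjects onto the $n$-dimensional irreducible module, and the sum truncates to $J_{K,n}(q)$. This truncation will give~\eqref{eq1} once $J_K$ is shown to be well defined.

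\textbf{Step 2: the expansions $f_m$.} Define the component $f_m\in R[\z_m,t^{1/m}][\![q-\z_m]\!]$ by expanding the Verma-module state sum at $q=\z_m+\epsilon$. The summation indices are split as $k=mj+r$ with $0\le r<m$. The $q$-Pochhammer factors $(q;q)_k$ then factor into a cyclotomic part and $(q^m;q^m)_j$, and $t$ enters through $t^{1/m}$. The sum over $j$ at fixed $\epsilon$-order is an infinite geometric-type sum, essentially a trace over the Verma module of the linearized braid action. Its resummation produces the inverse of $\det(I-\rho_{\mathrm{Burau}}(\beta)(t))$, which equals $\Delta_K(t)$ up to units and a factor $(1-t)$. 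The factor $(1-t)$ is cancelled by using the long-knot (1-1 tangle) normalization. This step shows that each coefficient of $\epsilon^k$ lies in $\BZ[\z_m,t^{\pm1/m},\Delta_K(t)^{-1}]$, in the spirit of the Melvin--Morton--Rozansky expansion; the case $m=1$ is exactly Rozansky's rationality theorem. For general $m$ the argument is the same, after first summing over the residues $r$, which accounts for the appearance of $t^{1/m}$.

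\textbf{Step 3: gluing.} We must check that
\[
f_m(x,q-\z_{pm}+\z_{pm}-\z_m)=\vphi_p f_{pm}(x^p,q-\z_{pm}).
\]
My plan is to realize both sides as expansions of a single $p$-adically convergent object. Over $R_p$, the unresolved state sum at $q$ near $\z_{pm}$ converges $p$-adically, because $|\z_{pm}-\z_m|_p<1$ and $(q;q)_k$ becomes $p$-adically divisible. Re-expanding around $\z_m$ then gives the left-hand side. The Frobenius twist $t\mapsto t^p$ appears because the cyclic regrouping $k=pmj+r$ turns $t^{1/m}$-monomials into $t^{1/(pm)}$-monomials in $x^p$. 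An alternative is to note that the Habiro ring of an étale algebra is characterized by its evaluations $t\mapsto q^n$ together with $p$-adic continuity. Either way, the ring map~\eqref{maptn} together with Lemma~\ref{lem.unique} reduces matters to controlling a single element of $\varprojlim_n \BZ[t^{\pm1},q^{\pm1}]/(t^{-1};q)_n$.

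Here the cyclotomic expansion of Habiro (and its $t$-deformation) helps. It writes $J_K$ as $\sum_k c_k(t,q)\,(tq;q)_k(t^{-1}q;q)_k$, but with $c_k$ having denominators, so the expansion must be taken in $R$. The gluing identity for each individual summand is a direct computation on $q$-Pochhammer symbols, and it then passes to the sum by $p$-adic convergence.

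\textbf{Main obstacle.} I expect the hardest part to be the integrality and convergence in Steps 2 and 3 when $m>1$: showing that the resummed expansions at $\z_m$ have coefficients in $R[\z_m,t^{1/m}]$ with only $\Delta_K(t)$ in the denominators, and that they satisfy the Frobenius-twisted gluing exactly, not just up to $p$-torsion. I would first try to prove a uniform statement for a general class of $q$-hypergeometric sums whose summand ratios are governed by a matrix with determinant $\Delta_K(t)$ (the Burau matrix), using the explicit description of étale Habiro rings from~\cite{GW:explicit}. The knot case would then be a specialization of that statement.
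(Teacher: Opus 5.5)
There is a genuine gap, mainly in Step~3 and also in Step~2. Step~1 and the splitting $k=mk'+n$ in Step~2 match the paper.

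\textbf{Step~3 fails as stated.} The unresolved state sum is \emph{not} $p$-adically convergent for generic $t$. The $q$-binomials are integral polynomials in $q$, and the only smallness of the summand is the factor $(1-t)^{k}$ coming from the $t$-dependent Pochhammer symbols (Lemma~\ref{lem:pochhammers}). So the sum converges $(t-1)$-adically, not $p$-adically; already $\sum_k(1-t)^k$ shows this. The condition $|\z_{pm}-\z_m|_p<1$ only makes the re-expansion of $f_m$ around $\z_{pm}$ meaningful; it does not prove the gluing identity.

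Your alternative runs in the wrong direction. Lemma~\ref{lem.unique} and the map~\eqref{maptn} both go \emph{out of} $\calH_{\BZ[t^{\pm1},\Delta(t)^{-1}]/\BZ[t^{\pm1}]}$. Controlling an element of $\varprojlim_n\BZ[t^{\pm1},q^{\pm1}]/(t^{-1};q)_n$ therefore says nothing about the existence of expansions at $\z_m$ with coefficients in $\BZ[t^{\pm1},\Delta_K(t)^{-1}][\z_m,t^{1/m}]$, or about their Frobenius-twisted gluing. The paper stresses that the \'etale Habiro ring carries strictly more information than the cyclotomic completion. Likewise, termwise gluing of $\sum_k c_k(tq;q)_k(t^{-1}q;q)_k$ passes to the limit only in a topology where that sum converges, which is again $(t-1)$-adic. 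Also, in the paper Habiro's cyclotomic expansion is a corollary of the theorem, not an input.

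\textbf{Step~2 lacks the mechanism for $m>1$.} At positive order in $q-\z_m$, the inner sums are not traces of symmetric powers of a Burau matrix. Writing $k=mk'+n$, you get shifted sums $\sum_{k'}\prod_i\binom{(A^tk')_i+s_i}{k'_i}y_i^{k'_i}$ with $s_i=\lfloor((A^tn)_i-n_i)/m\rfloor$, weighted by polynomials in $k'$ (Lemmas~\ref{lem:pochhammers}, \ref{lem.poch.t1}, Corollary~\ref{cor.binom}). MacMahon's theorem does not evaluate these. The paper uses Lagrange inversion (Theorem~\ref{thm:lag.inv}) to obtain $\prod_i z_i^{s_i}/\d$. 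It then uses Lemma~\ref{lem.zt} ($z_i=t^{\ve_i}$ under the specialization) to see that only $\Delta_K(t)$ survives in denominators. The polynomial weights can be realized as derivatives only if each crossing carries its own variable.

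\textbf{The missing idea.} What repairs both steps is the multivariable deformation~\eqref{ABxt}: insert independent variables $x_1,\dots,x_N$ at the crossings. Then $f_{A,B,\nu,\ve}(x,t,q)\in\BZ[t^{\pm1},q^{\pm1}][\![x]\!]$, and:
\begin{enumerate}
\item its expansions at $\z_m$ become $x$-derivatives of $\prod_i z_i^{s_i}/\d$, hence lie in $R\otimes\BQ[x^{1/m},t^{1/m},\z_m]$ for the Nahm ring $R$;
\item integrality follows by intersecting with $\BZ[\z_m,t^{\pm1/m}][\![x^{1/m}]\!]$;
\item the Frobenius-twisted gluing is automatic, because every $x$-coefficient is a Laurent polynomial in $t,q$ (Theorem~\ref{thm.1.def}).
\end{enumerate}
One then specializes $x=1$ through the Frobenius-compatible ring map $R\to\BZ[t^{\pm1},\Delta_K(t)^{-1}]$ of Lemma~\ref{lem:def2A}.

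\textbf{Invariance is not argued.} You never show independence of the braid presentation. In the paper this is immediate from the injections $\calH_{\BZ[t^{\pm1},\Delta(t)^{-1}]/\BZ[t^{\pm1}]}\hookrightarrow\BZ[t^{\pm1},\Delta(t)^{-1}][\![q-1]\!]\hookrightarrow\BQ[n][\![q-1]\!]$ (via $t\mapsto q^n$). These show that $J_K$ is determined by the values $J_{K,n}(q)$.

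\textbf{Minor point.} The unreduced determinant $\det(I-\fb_r(\beta))$ vanishes, and the reduced one carries the factor $(1-t^r)/(1-t)$, not $(1-t)$. The trimmed version for the long knot gives $\Delta_K(t)$ up to a unit.
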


The above theorem does not use Habiro's theorems from~\cite{Habiro:WRT} about the
colored Jones polynomial, and has some immediate corollaries.

The first concerns the cyclotomic expansion of the colored Jones polynomial,
originally proven by Habiro~\cite{Habiro:WRT}. 

\begin{corollary}
\label{cor.cyclo}
For every knot $K$ in $S^3$, there exists a sequence of polynomials
$a_{K,k}(q) \in \BZ[q^{\pm 1}]$ such that for all $n>0$, we have
\be
\label{eq.cycloj}
J_{K,n}(q) \= \sum_{k=0}^\infty a_{K,k}(q) (q^{-n+1};q)_k \,.
\ee
\end{corollary}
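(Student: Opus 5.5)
The corollary should follow by combining Theorem~\ref{thm.1} with Lemma~\ref{lem.unique}. Let $R=\BZ[t^{\pm 1},\Delta_K(t)^{-1}]$. Since $\Delta_K(1)=1$, the evaluation $\mathrm{ev}_{t=1}:\BZ[t^{\pm1}]\to\BZ$ extends to $R\to\BZ$, so the hypothesis of Lemma~\ref{lem.unique} holds. By Theorem~\ref{thm.1} we have the element $J_K(t,q)\in\calH_{R/\BZ[t^{\pm1}]}$ with $J_K(q^{n-1},q)=J_{K,n}(q)\in\BZ[q^{\pm1}]$ for all $n>0$. This is exactly the integrality assumption in the second part of Lemma~\ref{lem.unique}. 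Hence the image of $J_K$ lies in $\varprojlim_n \BZ[t^{\pm1},q^{\pm1}]/(t^{-1};q)_n$.

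Next I would expand this inverse-limit element in the basis $(t^{-1};q)_k$. Any element of $\varprojlim_n \BZ[t^{\pm1},q^{\pm1}]/(t^{-1};q)_n$ can be written as $\sum_{k\ge0} a_k(q)(t^{-1};q)_k$ with $a_k\in\BZ[q^{\pm1}]$. To see this, use induction on $n$: modulo $(t^{-1};q)_{n+1}$, the difference between two consecutive truncations is divisible by $(t^{-1};q)_n$. The quotient $(t^{-1};q)_n\BZ[t^{\pm1},q^{\pm1}]/(t^{-1};q)_{n+1}$ is isomorphic to $\BZ[t^{\pm1},q^{\pm1}]/(1-t^{-1}q^n)\cong \BZ[q^{\pm1}]$, since $t\mapsto q^n$. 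So the new term is $a_n(q)(t^{-1};q)_n$ for a unique Laurent polynomial $a_n$. Taking the limit gives the coefficients $a_{K,k}(q)$.

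Finally, I would specialize at $t=q^{n-1}$. Then $(t^{-1};q)_k=(q^{-n+1};q)_k$, which vanishes for $k\ge n$ because it contains the factor $1-q^{-n+1}q^{n-1}=0$. So the sum is finite and lies in $\BZ[q^{\pm1}]$. The evaluation $t\mapsto q^{n-1}$ sends the ideal $(t^{-1};q)_N$ to $0$ for $N\ge n$, so it is well defined on the inverse limit.

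The main obstacle is checking that this evaluation on the inverse limit agrees with the specialization $J_K(q^{n-1},q)$ computed in the Habiro ring via~\eqref{maptn}. The injection~\eqref{eq.unique} should be constructed precisely so that it is compatible with these evaluations. That is presumably how it is proved: both sides are determined by their specializations at $t=q^{n-1}$. Granting this compatibility, we get $J_{K,n}(q)=\sum_{k=0}^{n-1}a_{K,k}(q)(q^{-n+1};q)_k$, which is~\eqref{eq.cycloj}.
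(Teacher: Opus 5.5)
Your proposal is correct and is essentially the paper's argument, which just invokes the second part of Lemma~\ref{lem.unique}. The compatibility you flag as the main obstacle is built into that lemma's proof: the expansion $f=\sum_{k<n}(t^{-1};q)_k a_k(q)+(t^{-1};q)_n f_n(t,q)$ is produced inside the Habiro ring, and the tail vanishes at $t=q^{n-1}$. The paper phrases the deduction as applying the lemma to $f(qt,q)$, which is an index shift matching the $t=q^n$ convention used elsewhere in the paper. With the normalization $J_K(q^{n-1},q)=J_{K,n}(q)$ of Theorem~\ref{thm.1}, applying the lemma directly to $J_K$, as you do, gives exactly the factors $(q^{-n+1};q)_k$.
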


The next corollary concerns the so-called loop expansion of the colored Jones 
polynomial introduced by Rozansky~\cite{Rozansky}
\be
\label{Jloop}
J^\loop_K(t,q) \in \BZ[t^{\pm 1}, \Delta(t)^{-1}][\![q-1]\!], \qquad
J^\loop_K(t,1) \= \frac{1}{\Delta_K(t)}
\ee
uniquely characterized by the fact that
\be
\label{uniqueJ}
J_{K,n}(q) \= J^\loop_K(q^{n-1},q) \in \BQ[n][\![q-1]\!], \qquad (n > 0) \,.
\ee
In~\cite[Conj.7.4]{Habiro:WRT}, Habiro conjectured that the loop expansion
can be defined near all roots of unity $\z_m$. 
This is built into the very definition of the Habiro ring of $\BZ[t^{\pm 1}, \Delta(t)^{-1}]$.
Recall that an element
$J_K(t,q) \in \calH_{\BZ[t^{\pm 1}, \Delta(t)^{-1}]/\BZ[t^{\pm 1}]}$
is a collection of series
$J_{K,m}(t,q-\z_m) \in \BZ[t^{\pm 1}, \Delta(t)^{-1}][t^{1/m},\z_m][\![q-\z_m]\!]$ 
that glue, after a Frobenius twist.

\begin{corollary}
\label{cor.hab}
For every knot $K$ in $S^3$, we have
\be
\label{eq2}
J_{K,1}(t,q-1) \= J^\loop_K(t,q) \in \BZ[t^{\pm 1}, \Delta(t)^{-1}][\![q-1]\!] \,,
\ee
and the collection of series $J_{K,m}(t,q-\z_m)$ at roots of unity exists (which proves
Habiro's conjecture).
Moreover, these expansions at roots of unity determine
one another after gluing with Frobenius twists. 
\end{corollary}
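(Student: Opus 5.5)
The plan is to take $J_K(t,q)$ from Theorem~\ref{thm.1} and read off its $m=1$ component. By Definition~\ref{def.habR}, this component is a series
\[
J_{K,1}(t,q-1)\in \BZ[t^{\pm1},\Delta_K(t)^{-1}][\![q-1]\!].
\]
We will show that it satisfies the characterization~\eqref{uniqueJ} of the loop expansion. The substitution $t\mapsto q^{n-1}$ in~\eqref{maptn} is compatible with passing to the $m=1$ component. It sends an element of $\BZ[t^{\pm1},\Delta_K(t)^{-1}][\![q-1]\!]$ to an element of $\BZ[\![q-1]\!]$. This uses that $q^{n-1}=(1+(q-1))^{n-1}$ is a power series in $q-1$, together with $\Delta_K(q^{n-1})\equiv\Delta_K(1)=1 \bmod (q-1)$, so $\Delta_K(q^{n-1})$ is invertible. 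The resulting series is exactly the image of $J_K(q^{n-1},q)\in\calH_\BZ$ under $\calH_\BZ\hookrightarrow\BZ[\![q-1]\!]$. By~\eqref{eq1} this image equals $J_{K,n}(q)$ expanded at $q=1$, so $J_{K,1}(q^{n-1},q-1)=J_{K,n}(q)$ for all $n>0$.

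Next we check uniqueness, which gives~\eqref{eq2}. Suppose $G(t,q)\in\BZ[t^{\pm1},\Delta^{-1}][\![q-1]\!]$ satisfies $G(q^{n-1},q)=0$ for all $n>0$. Write $h=q-1$ and $G=\sum_k g_k(t)h^k$. Expanding $q^{n-1}=e^{(n-1)\log q}$, the coefficient of $h^k$ in $G(q^{n-1},q)$ is a polynomial in $n$ with rational coefficients. Its leading part is $g_k(1)$ plus terms involving derivatives of $g_j$ at $t=1$ for $j<k$. More precisely, we regroup by the variable $N=(n-1)h$, which is the standard argument for the loop expansion. Vanishing for infinitely many $n$ forces every Taylor coefficient of each $g_k$ at $t=1$ to vanish, by induction on $k$. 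Since $g_k$ lies in $\BZ[t^{\pm1},\Delta^{-1}]$, which is a domain of rational functions, and $g_k$ is analytic at $t=1$ with vanishing Taylor expansion, we get $g_k=0$. So $G=0$, and $J_{K,1}$ coincides with Rozansky's $J^\loop_K$, which is characterized by~\eqref{uniqueJ}. The normalization $J^\loop_K(t,1)=1/\Delta_K(t)$ then follows from Rozansky's result, or directly from this identification. This uniqueness step is the only genuinely non-formal point, and the main obstacle is organizing the two-variable expansion cleanly. I would phrase it via the map to $\BQ[n][\![h]\!]$ in~\eqref{uniqueJ}: a polynomial in $n$ vanishing at all positive integers is zero, and the map $t\mapsto e^{(n-1)\log(1+h)}$ from $\BQ[\![t-1]\!][\![h]\!]$ is injective, because the coefficient of $h^j n^j$ recovers the $(t-1)^j$ coefficients triangularly.

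Finally, the existence of the expansions $J_{K,m}(t,q-\z_m)$ at all roots of unity is immediate from the fact that $J_K$ is an element of $\calH_{\BZ[t^{\pm1},\Delta_K^{-1}]/\BZ[t^{\pm1}]}$. That they determine one another is exactly the Frobenius-twisted gluing condition in Definition~\ref{def.habR}. For $p$ prime, the component $f_m$ is determined by $f_{pm}$ over $R_p$ and conversely. Since any two $m,m'$ are connected by a chain of steps $m\leftrightarrow pm$, together with the injectivity of the $p$-adic comparisons, the whole collection is determined from any one component, for instance from $J^\loop_K$. This proves Habiro's conjecture in the stated form.
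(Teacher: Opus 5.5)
Your proposal is correct and follows essentially the same route as the paper: you read off the $m=1$ component of $J_K$, identify it with $J^\loop_K$ because both evaluate to $J_{K,n}(q)$ at $t=q^{n-1}$ and the map $t\mapsto q^{n-1}$ into $\BQ[n][\![q-1]\!]$ is injective, and you get the roots-of-unity statements directly from the definition of the Habiro ring. The one difference is that you justify that injectivity with the triangularity argument in $N=(n-1)h$, whereas the paper only asserts it in \eqref{injects}.
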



The next corollary concerns a compatibility between geometric Dehn-filling operations
on a knot complement, and algebraic push-forward operations on the Habiro ring of
Theorem~\ref{thm.1}.
Recall that Dehn filling with slope $\gamma \in \BQ \cup \{\infty\}$ is the process
of gluing a solid torus $S^2 \times D^2$ to the boundary of a 3-manifold with torus
boundary using a diffeomorphism of the boundary torus in such a way that the curve
with slope $\gamma$ bounds a disk. Applying this operation to a knot complement
$S^3\setminus K$ results to a closed 3-manifold $S^3_{K,\gamma}$ whose homology
is given by $H_1(S^3_{K,a/b},\BZ)=\BZ/a\BZ$ for coprime integers $(a,b)$. 

As mentioned in the introduction, Habiro lifted the WRT invariant of an integer
homology sphere to a map
\be
\label{ZHS}
\{\ZHS\} \to \calH_\BZ \,.
\ee
This map was extended to a lift of the WRT invariant of rational homology 3-spheres
$M$ with $|H_1(M,\BZ)|=N$ by Beliakova--B\"uhler--L\^{e}~\cite{BL:SO3}.

On the side of algebra, we use Lemma~\ref{lem.unique} and~\cite[Sec. 3]{BCL}
to define the twisted push-forward maps 
\be
\label{Omdef}
\begin{aligned}
\calL_1 : &  \calH_{\BZ[t^{\pm 1}, \Delta(t)^{-1}]/\BZ[t^{\pm 1}]} \to \calH_\BZ,
\qquad t^k \mapsto q^{-k^2}
\\
\calL_{-1} : & \calH_{\BZ[t^{\pm 1}, \Delta(t)^{-1}]/\BZ[t^{\pm 1}]} \to \calH_\BZ,
\qquad t^k \mapsto q^{k^2} \,.
\end{aligned}
\ee
Note that in~\cite{BL} a surgery operation was defined for $\gamma \in \BQ$ with
numerator $1$, but for simplicity, we will not discuss it here.
We will show below that these maps are well-defined.
The main theorem of Beliakova--L\^{e} together with the isomorphism
$\calH_{\BZ[N^{-1}]/\BZ} \cong \varprojlim_n \BZ[N^{-1}][q]/(q;q)_n$
imply the following: 

\begin{corollary}
\label{cor.lap1}
For every $\gamma \neq 0$, we have a commutative diagram
\be
\label{cd2}
\begin{tikzpicture}
\draw(0,2) node {$\{\mathrm{Knots}\}$};
\draw(4,2) node {$\calH_{\BZ[t^{\pm 1}, \Delta(t)^{-1}]/\BZ[t^{\pm 1}]}$};
\draw(0,0) node {$\{\QHS\}$};
\draw(4,0) node {$\calH_{\BZ[N^{-1}]/\BZ}$};
\draw[->](1,2)--(2.2,2);
\draw(1.6,2.3) node {$J$};
\draw[->](0,1.5)--(0,0.5);
\draw(-0.6,1) node {$\mathrm{Dehn}_\gamma$};
\draw[->](4,1.5)--(4,0.5);
\draw(4.3,1) node {$\calL_\gamma$};
\draw[->](0.8,0)--(3,0);
\end{tikzpicture}
\ee
\end{corollary}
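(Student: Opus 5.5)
The plan is to reduce the commutativity of \eqref{cd2} to a statement about cyclotomic expansions. On that side the Beliakova--B\"uhler--L\^e lift of the WRT invariant of a rational homology sphere is already given by an explicit formula, so both paths around the square can be compared term by term. Write $\gamma=a/b$ with $(a,b)=1$ and $a\neq0$, so that $M=S^3_{K,\gamma}$ is a $\QHS$ with $N=|H_1(M,\BZ)|=|a|$.

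\textbf{Step 1: a cyclotomic form of $J_K$.} Since $\Delta_K(1)=1$, evaluation at $t=1$ extends to $\BZ[t^{\pm1},\Delta_K(t)^{-1}]$. By Theorem~\ref{thm.1}, $J_K(q^{n-1},q)=J_{K,n}(q)\in\BZ[q^{\pm1}]$. The second part of Lemma~\ref{lem.unique} then places the image of $J_K$ in $\varprojlim_n\BZ[t^{\pm1},q^{\pm1}]/(t^{-1};q)_n$. So we may write
\[
J_K(t,q)\=\sum_{k\ge0}a_{K,k}(q)\,(t^{-1};q)_k ,
\]
with the same coefficients as in Corollary~\ref{cor.cyclo}, and this expansion determines $J_K$ by the injectivity in \eqref{eq.unique}.

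\textbf{Step 2: definition and well-definedness of $\calL_\gamma$.} Following \cite[Sec.~3]{BCL} and the Laplace transform of Beliakova--L\^e, I define $\calL_\gamma$ on the topological basis $(t^{-1};q)_k$ and extend by continuity. On monomials it is the Gaussian substitution attached to the slope, namely $t^k\mapsto q^{-bk^2/a}$ suitably normalized by the framing and Gauss-sum factors; for $\gamma=\pm1$ this recovers \eqref{Omdef}. For general $a$ the image of each basis element is not a polynomial. Instead it is an element of $\calH_{\BZ[N^{-1}]/\BZ}\cong\varprojlim_n\BZ[N^{-1}][q]/(q;q)_n$: the Gauss sums over $\BZ/a$ introduce exactly the denominators $N$ and the roots of unity that are absorbed after the Beliakova--B\"uhler--L\^e normalization.

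The key input is the divisibility lemma of Beliakova--L\^e: $\calL_\gamma\bigl((t^{-1};q)_k\bigr)$ lies in the ideal generated by $(q;q)_{\lfloor k/2\rfloor}$ in $\BZ[N^{-1}][q]$, up to the normalization. Hence the series $\sum_k a_{K,k}\,\calL_\gamma((t^{-1};q)_k)$ converges in the completion. It also follows that $\calL_\gamma$ is a well-defined, continuous map out of the image of \eqref{eq.unique}.

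\textbf{Step 3: commutativity.} At a root of unity $\z$ of order coprime to $N$, the WRT invariant of $M$ is the surgery formula
\[
\tau_M(\z)\;\propto\;\sum_n [n]\,\z^{\text{(quadratic in $n$ twisted by $\gamma$)}}\,J_{K,n}(\z),
\]
a Gauss-sum regularized sum in the spirit of Rozansky and Beliakova--L\^e. Inserting the cyclotomic expansion from Step 1 and exchanging the sums, which is legitimate because $(\z^{1-n};\z)_k=0$ for $k$ large, turns the right-hand side into $\sum_k a_{K,k}(\z)\,\calL_\gamma((t^{-1};q)_k)|_{q=\z}$. This is exactly the formula by which Beliakova--B\"uhler--L\^e define their lift. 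Since elements of $\varprojlim_n\BZ[N^{-1}][q]/(q;q)_n$ are determined by their values at roots of unity of order coprime to $N$ (Habiro's injectivity, in the version over $\BZ[N^{-1}]$), the two images of $K$ in $\calH_{\BZ[N^{-1}]/\BZ}$ coincide.

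\textbf{Main obstacle.} I expect the main obstacle to be Step 2 for general numerator $a$. This means checking that the Gaussian substitution for slope $a/b$ is compatible with the ideals $(t^{-1};q)_n$, and that it lands integrally over $\BZ[N^{-1}]$ with the stated divisibility. When $b\neq\pm1$ one must also account for the lens-space and continued-fraction contributions. My first attempt would be to take the divisibility lemma directly from \cite[Sec.~3]{BCL}, factoring $\calL_{a/b}$ as $\calL_{\pm1}$-type Laplace transforms composed with the lens-space normalization, rather than reproving it.
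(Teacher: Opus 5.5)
Your route matches the paper's. You push $J_K$ into the cyclotomic completion via the second part of Lemma~\ref{lem.unique}, apply the Laplace transform of \cite[Sec.~3]{BCL} (whose divisibility gives convergence), and take commutativity from \cite{BCL}. For $\gamma=\pm1$, hence $N=1$, the argument goes through, and the final injectivity is Habiro's theorem for $\calH_\BZ$. This is the only case in which the paper actually defines $\calL_\gamma$, in \eqref{Omdef}.

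For $N=|a|>1$, however, the closing step of Step~3 is false. Elements of $\varprojlim_n\BZ[N^{-1}][q]/(q;q)_n\cong\calH_{\BZ[N^{-1}]/\BZ}$ are \emph{not} determined by their values at roots of unity of order coprime to $N$.
\begin{itemize}
\item For a prime $p\mid N$, $\Phi_m$ and $\Phi_{p^rm}$ generate the unit ideal in $\BZ[N^{-1}][q]$, since their resultant is a power of $p$. By the Chinese remainder theorem the completion therefore splits into a product of factors indexed by the $N$-primary part of the order of the root of unity.
\item Equivalently, Definition~\ref{def.habR} imposes no gluing condition at primes $p\mid N$, because $\BZ[N^{-1}]_p=0$.
\item Concretely, setting $f_m=0$ for $(m,N)=1$ and $f_m=1$ otherwise gives a nonzero element that vanishes at every root of unity of order coprime to $N$.
\end{itemize}
So comparing values at such roots of unity proves commutativity on only one factor.

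The fix is what the paper does: take commutativity directly from \cite{BCL}. There it comes out of the construction of the unified invariant as the Laplace transform of the cyclotomic expansion. Once you have $J_K=\sum_k a_{K,k}(q)(t^{-1};q)_k$ from Step~1, the diagram commutes at the level of rings. You already remark that your formula ``is exactly'' the one defining their lift; use that identity in the ring, not only at roots of unity. Separately, your Step~2 for $|a|>1$ (``suitably normalized'') does not yet define a map. The paper does not construct $\calL_\gamma$ beyond $\gamma=\pm1$ either.
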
  

The next corollary concerns WRT invariants of 3-manifolds $M$ with $H_1(M,\BZ)=\BZ$
(i.e., closed 3-manifolds with Betti number 1 and torsionfree homology). Such
manifolds $M$ have a well-defined Alexander polynomial $\Delta_M(t) \in \BZ[t^{\pm 1}]$,
and typical examples include the manifolds obtained by 0-surgery on a knot in $S^3$.

In~\cite{Ohtsuki}, Ohtsuki extracted power series invariants from the WRT invariants
\be
\label{b1}
\{M \,\, | \,\, H_1(M,\BZ)=\BZ \} \to \BZ[\delta_M^{-1}][\![q-1]\!], \qquad
\ee
where $\delta_M$ is the leading coefficient of $\Delta_M(t)$, as well as
invariants 
\be
\label{b11}
\{M \,\, | \,\, H_1(M,\BZ)=\BZ \} \to \calO_\BK[\mathrm{disc}(\BK)^{-1}][\![q-1]\!]
\ee
where $\a$ is a root of the Alexander polynomial and $\BK=\BQ(\a)$ is the
corresponding number field of discriminant $\mathrm{disc}(\BK)$ and ring of integers $\calO_\BK$. 

Our next corollary lifts the above-mentioned invariants of Ohtsuki for the manifolds
obtained by 0-surgery on a knot in $S^3$, to a Habiro ring.

\begin{corollary}
\label{cor.0surgery}  
We have a commutative diagram 
\be
\label{cd3}
\begin{tikzpicture}
\draw(0,2) node {$\{\mathrm{Knots}\}$};
\draw(4,2) node {$\calH_{\BZ[t^{\pm 1}, \Delta(t)^{-1}]/\BZ[t^{\pm 1}]}$};
\draw(0,0) node {$\{M \,\, | \,\, H_1(M,\BZ)=\BZ \}$};
\draw(4,0) node {$\calH_{\calO_\BK[\mathrm{disc}(\BK)^{-1}]/\BZ}$};
\draw[->](1,2)--(2.2,2);
\draw(1.6,2.3) node {$J$};
\draw[->](0,1.5)--(0,0.5);
\draw(-0.6,1) node {$\mathrm{Dehn}_0$};
\draw[->](4,1.5)--(4,0.5);
\draw(4.3,1) node {$\calL_0$};
\draw[->](2.1,0)--(2.6,0);
\end{tikzpicture}
\ee
\end{corollary}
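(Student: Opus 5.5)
Our plan is to construct $\calL_0$ as a Frobenius-compatible family of ``Gaussian integration'' maps at each root of unity, and then check that the composite with $J$ agrees with Ohtsuki's invariants of $S^3_{K,0}$.

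\textbf{Step 1: definition of $\calL_0$ near a root of unity.} Write an element of $\calH_{\BZ[t^{\pm1},\Delta(t)^{-1}]/\BZ[t^{\pm1}]}$ as its family of local expansions $f_m(t,q-\z_m)$, with coefficients in $\BZ[t^{\pm1},\Delta(t)^{-1}][t^{1/m},\z_m]$. For $0$-surgery the framing factor $q^{\pm k^2}$ degenerates, so we define $\calL_0$ locally by taking residues at the roots of $\Delta$. At $m=1$ this gives Ohtsuki's formula
\[
(\calL_0 f)_1 \= \sum_{\a:\,\Delta(\a)=0} \Res_{t=\a}\,\bigl(\text{Gaussian-smoothed } f_1\bigr)\,\frac{dt}{t},
\]
where the Gaussian smoothing is the formal operator $\exp(\hbar (t\partial_t)^2)$ with $q=e^\hbar$. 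Each $(q-1)$-coefficient of $f_1$ is a polynomial in $t^{\pm1}$ divided by a power of $\Delta$. Applying the smoothing and taking the residue at $\a$ therefore yields elements of $\calO_\BK[\mathrm{disc}(\BK)^{-1}]$, up to the usual $(t\partial_t)$-denominators. The integrality of these coefficients is the content of Ohtsuki's theorem, and for $m>1$ we would invoke~\cite[Sec. 3]{BCL}, replacing $t$ by $t^{1/m}$ and $\a$ by its $m$-th roots.

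\textbf{Step 2: Frobenius compatibility.} We would show that $(\calL_0 f)_m$ satisfies the gluing~\eqref{gluefmp} in the ring $\calO_\BK[\mathrm{disc}(\BK)^{-1}]_p$. The key input is that $\varphi_p(t)=t^p$ permutes the roots $\a$ modulo $p$, in the way the Frobenius of $\BK$ does. Residues are compatible with étale base change, so the residue operator commutes with the Frobenius twist, and the gluing for $f$ then transfers to $\calL_0 f$. We expect this to be the main obstacle: one has to control $p$-adic convergence of the Gaussian operator, whose coefficients involve $1/k!$, in the $p$-adic completion. Our first attempt would be to rewrite the Gaussian via $q$-binomial identities in the spirit of Lemma~\ref{lem.unique}, so that on the cyclotomic basis $(t^{-1};q)_n$ it acts by explicit integral formulas, as in Beliakova--L\^e's Laplace transform.

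\textbf{Step 3: commutativity of the diagram.} The WRT invariant of $S^3_{K,0}$ at $\z$ is a Gaussian sum $\sum_n [n]\, J_{K,n}(\z)$ with zero framing, normalized by the Gauss sum of the unknot. Using Theorem~\ref{thm.1}, we substitute $J_{K,n}(\z)=J_K(\z^{n-1},\z)$. Near $\z_m$, we then replace the sum over $n$ by a residue computation on the local expansion $J_{K,m}$: this is a discrete Poisson/Euler--Maclaurin step, in which the poles come only from $\Delta^{-1}$. The result matches $(\calL_0 J_K)_m$. At $m=1$ this is precisely Ohtsuki's definition, and the Frobenius gluing of Step 2 then determines the other $m$.
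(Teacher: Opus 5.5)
Your Step 1 does not define the right map, so the rest cannot work as stated.

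\textbf{Step 1.} There are three problems with your definition of $\calL_0$.
\begin{itemize}
\item The operator $\exp(\hbar(t\partial_t)^2)$ acts by $t^k\mapsto q^{k^2}t^k$. That is the framing-$(\pm1)$ twist underlying $\calL_{\pm1}$. For $0$-surgery the surgery formula $\sum_n[n]^2J_{K,n}$ has no twist factor at all, as you note yourself, so no Gaussian should appear.
\item Summing $\Res_{t=\a}$ over \emph{all} roots of $\Delta$ gives, by the residue theorem, minus the residues at $t=0$ and $t=\infty$. This is a Galois-invariant quantity in which only the leading coefficient of $\Delta$ is inverted. It cannot produce the $\calO_\BK[\mathrm{disc}(\BK)^{-1}]$-valued series attached to a chosen root $\a$ with $\BK=\BQ(\a)$, which is what $\calL_0$ must output.
\item \cite[Sec.~3]{BCL} treats the framing-$(\pm1)$ Laplace transform on $\varprojlim_n\calH_\BZ[t^{\pm1}]/(t^{-1};q)_n$. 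It says nothing about residues at roots of $\Delta$, so it cannot supply the components at $m>1$.
\end{itemize}
With your definition the $m=1$ component is not Ohtsuki's series, so the required compatibility at $q=1$ fails.

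\textbf{Step 2.} This would be the real work if one built $\calL_0$ root by root. As written it is only a heuristic (``residues commute with \'etale base change''). There is no argument for $p$-adic convergence or for the gluing identity.

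\textbf{The missing idea.} The paper's argument makes Steps 2 and 3 unnecessary. The map $\calH_{\calO_\BK[\mathrm{disc}(\BK)^{-1}]/\BZ}\to\calO_\BK[\mathrm{disc}(\BK)^{-1}][\![q-1]\!]$ is injective. So the square commutes as soon as the outer diagram through $\calO_\BK[\mathrm{disc}(\BK)^{-1}][\![q-1]\!]$ commutes. That outer statement lives purely at $q=1$: Ohtsuki's invariant of $S^3_{K,0}$ is obtained from the $q=1$ expansion $J_{K,1}=J^{\loop}_K$ (Corollary~\ref{cor.hab}).

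The same injectivity shows that $\calL_0(J_K)$ depends only on $M=S^3_{K,0}$ and not on the choice of $K$, because Ohtsuki's series is a topological invariant of $M$. This is what makes the bottom arrow well defined, and your proposal never addresses it. Your closing remark that gluing ``determines the other $m$'' is exactly this injectivity; it should replace Step 3 rather than follow it.

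\textbf{Step 3.} As written it is also unjustified. Evaluating $\sum_n[n]^2J_{K,n}(\z_m)$ by residues over $m$-th roots of unity produces factors like $(1-\a^m)^{-1}$ and contributions from $t=0,\infty$. So the WRT value is not simply a value of $(\calL_0J_K)_m$. In any case, the corollary concerns the lift of Ohtsuki's power series, not WRT values.
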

In particular, one obtains topological invariants in $\calO_\BK[\mathrm{disc}(\BK)^{-1}][\z_m]$ of
closed 3-manifolds $M$ as above for all roots of unity $\z_m$ of order coprime to $\mathrm{disc}(\BK)$.


\begin{remark}
\label{rem.notiso}
Note that
\be
\label{caution}
\calH_{\calO_\BK[\mathrm{disc}(\BK)^{-1}]/\BZ} \neq \varprojlim_n \calO_\BK[\mathrm{disc}(\BK)^{-1}][q]/(q;q)_n,
\quad
\calH_{\calO_\BK[\mathrm{disc}(\BK)^{-1}]/\BZ} \neq \calH_\BZ \otimes_{\BZ}\calO_\BK[\mathrm{disc}(\BK)^{-1}] 
\ee
and the fact that the Ohtsuki invariants give well-defined elements of
$\calH_{O_\BK[\mathrm{disc}(\BK)^{-1}]/\BZ}$ is a non-trivial statement. Other elements of such
rings were constructed in the main results of ~\cite{GSWZ} using Chern--Simons
perturbation theory. 
\end{remark}

Our next corollary involves $q$-holonomic aspects of the lift of the colored
Jones polynomial. Observe that the map $\s$ that shifts $t$ to $qt$ is an
ring automorphism
\be
\label{Hsigma}
\s: \calH_{\BZ[t^{\pm 1}, \Delta(t)^{-1}]/\BZ[t^{\pm 1}]} \to
\calH_{\BZ[t^{\pm 1}, \Delta(t)^{-1}]/\BZ[t^{\pm 1}]}, \qquad t \mapsto q t \,.
\ee
Using $\s$, we can define a notion of $q$-holonomicity for elements of the above
ring. The $q$-hypergeometric origin of $J_K(t,q)$, or the fact that the colored
Jones polynomial is $q$-holonomic~\cite{GL:qholo} combined with
Equation~\eqref{uniqueJ}, imply the following:

\begin{corollary}
\label{cor.Jqholo}  
For every knot $K$, $J_K(t,q)$ is $q$-holonomic and in fact satisfies the same
$q$-difference equations as $(J_{K,n}(q))_{n \geq 1}$. 
\end{corollary}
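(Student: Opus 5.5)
The plan is to deduce Corollary~\ref{cor.Jqholo} from Theorem~\ref{thm.1}, using the injectivity in Lemma~\ref{lem.unique} and the $q$-holonomicity of the colored Jones polynomial~\cite{GL:qholo}. Let
\[
\textstyle\sum_{j=0}^d c_j(q^n,q)\, J_{K,n+j}(q)\=0 \qquad (n>0)
\]
be a recursion satisfied by $(J_{K,n}(q))_{n\geq1}$, with $c_j(u,q)\in\BZ[u^{\pm1},q^{\pm1}]$. Writing $t=q^{n-1}$, we have $q^n=qt$ and $J_{K,n+j}(q)=J_K(q^{j}t,q)|_{t=q^{n-1}}=(\s^jJ_K)(q^{n-1},q)$. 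This suggests the element
\[
F(t,q)\=\textstyle\sum_{j=0}^d c_j(qt,q)\,\s^j J_K(t,q)\in \calH_{\BZ[t^{\pm 1}, \Delta(t)^{-1}]/\BZ[t^{\pm 1}]},
\]
which lies in the ring because $\s$ is a ring automorphism by~\eqref{Hsigma} and the coefficients are in $\BZ[t^{\pm1},q^{\pm1}]$. The goal is to show $F=0$. Granting this, $J_K$ satisfies the same $q$-difference equation as $(J_{K,n}(q))_n$, and hence is $q$-holonomic in the sense defined through $\s$.

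First, I would check that specialisation commutes with the shift: under the map~\eqref{maptn} with $t\mapsto q^{n-1}$, the element $\s^j f$ goes to the image of $f$ under $t\mapsto q^{n-1+j}$. On polynomials in $t$ this is clear. For a general element I would argue componentwise: in the $m$-th component, $t$ goes to $q^{n-1}$ and $t^{1/m}$ goes to the compatible choice of $q^{(n-1)/m}$. With this, \eqref{eq1} and the recursion give $F(q^{n-1},q)=0$ in $\calH_\BZ$ for all $n>0$.

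The main step is to conclude that $F=0$ from vanishing at all the specialisations $t=q^{n-1}$, $n>0$. Because $\Delta_K(1)=1$, the map $\mathrm{ev}_{t=1}$ extends to $R=\BZ[t^{\pm1},\Delta(t)^{-1}]$, so Lemma~\ref{lem.unique} applies. It embeds $F$ into $\varprojlim_n \calH_\BZ[t^{\pm1}]/(t^{-1};q)_n$, and $F$ is represented there by a cyclotomic expansion $\sum_k b_k(t,q)(t^{-1};q)_k$. Evaluating at $t=q^{N}$ kills every term with $k>N$, since $(q^{-N};q)_k=0$ for $k>N$. The conditions $F(q^{N},q)=0$ for $N=0,1,2,\dots$ can then be used inductively to show that the coefficients can be reduced to $0$ modulo $(t^{-1};q)_k$ for each $k$. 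The inductive step uses that $(q^{-N};q)_N$ is a unit multiple of $(q;q)_N$, together with a Lagrange-interpolation argument in $\calH_\BZ[t^{\pm1}]$ modulo $(t^{-1};q)_k$.

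This last step is where I expect the difficulty. The task is to show that an element of $\calH_\BZ[t^{\pm1}]/(t^{-1};q)_k$ vanishing at $t=q^0,\dots,q^{k-1}$ is zero. This is a Chinese-remainder statement, and it requires the ideals $(t-q^a)$ to be coprime in the appropriate completed sense. I would handle it through the Habiro-type isomorphism $\BZ[t^{\pm1},q]/(t^{-1};q)_k\cong$ functions on $\{q^0,\dots,q^{k-1}\}$ with divided-difference conditions. An alternative is to compare Rozansky's expansions: by Corollary~\ref{cor.hab} and the uniqueness~\eqref{uniqueJ}, the component $F_1$ vanishes, and Frobenius gluing then forces the other components $F_m$ to vanish. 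I would use this second route as the fallback.
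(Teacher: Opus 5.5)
Your proposal is correct. The reduction is the paper's: form $F=\sum_j c_j(qt,q)\,\s^jJ_K$, check $F(q^{n-1},q)=0$ for all $n>0$, and conclude $F=0$ by an injectivity statement. Your fallback is exactly the paper's argument. There you kill the $m=1$ component via the uniqueness in~\eqref{uniqueJ}, since the coefficients are polynomials in $n$ vanishing at all positive integers, and then invoke Proposition~\ref{prop.hab11}. This is the chain of injections~\eqref{injects}, combined with~\cite{GL:qholo}.

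Your primary route through Lemma~\ref{lem.unique} is a genuinely different argument, and it also works, but you misidentify its key step. It is not a Chinese-remainder statement. In fact the ideals $(t-q^a)$ and $(t-q^b)$ are not comaximal over $\calH_\BZ$, because $q^a-q^b$ vanishes at $q=1$ and so is not a unit. Lagrange interpolation is therefore unavailable, and arguing through $t$-dependent coefficients $b_k(t,q)$ only obscures things.

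What you actually need is simpler. The proof of Lemma~\ref{lem.unique} writes $F=\sum_k (t^{-1};q)_k\,a_k(q)$ with $a_k\in\calH_\BZ$ independent of $t$. Then $0=F(q^N,q)=\sum_{k\le N}(q^{-N};q)_k\,a_k(q)$ is a triangular system. Its diagonal entries are $(q^{-N};q)_N=(-1)^Nq^{-N(N+1)/2}(q;q)_N$. These are non-zero-divisors in $\calH_\BZ$, since they are nonzero in the domain $\BZ[\![q-1]\!]$ into which $\calH_\BZ$ embeds. Hence $a_N=0$ by induction, and $F=0$ by the injectivity of~\eqref{finj}.

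As for what each route buys: the paper's route works directly in $\BZ[t^{\pm1},\Delta(t)^{-1}][\![q-1]\!]$ and needs vanishing only at infinitely many $n$. The cyclotomic route stays entirely within Habiro-ring evaluations. However, it uses vanishing at every $N\ge 0$, which is harmless since you can precompose with a power of $\s$. It also rests on the injectivity asserted in Lemma~\ref{lem.unique} rather than on Proposition~\ref{prop.hab11} and~\eqref{uniqueJ}.
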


We end this subsection with a parenthetical remark.

\begin{remark}
\label{rem.maptn2}
The map~\eqref{maptn} is actually a composition of $\s^n$ followed by the
evaluation at $t=1$, where
\be
\label{t=1}
\calH_{\BZ[t^{\pm 1}, \Delta(t)^{-1}]/\BZ[t^{\pm 1}]} \to \calH_\BZ, \qquad
f(t,q) \mapsto f(1,q) \,.
\ee
\end{remark}

\subsection{Extension to skein modules}
\label{sub.ext}

Theorem~\ref{thm.1} has an extension to the Kauffman bracket skein module
$\calS_q(S^3\setminus K)$ of the 3-manifold $M=S^3\setminus K$ with torus boundary. 
The latter, going back to Przytycki~\cite{Przytycki} and Turaev~\cite{Tu:conway}
in the 1990s, is a (non-finitely generated) $\BZ[q^{\pm 1/4}]$-module associated
to a 3-manifold whose generators are framed links, in the spirit of Conway's
interpretation of knot invariants and the skein theory of the Jones
polynomial. The $4$th root of $q$ is a technical issue (forcing one to invert the prime 2) and can be avoided by
considering the even skein module $\calS^{\mathrm{ev}}_q(M) \subset \calS(M)$ of $M$
generated by links with even color (i.e., color from the root lattice of
$\fsl_2(\BC)$), or equivalently from the 2-parallels of all framed links in $M$;
see~\cite{GL:from}. 

It follows from the proof of Theorem~\ref{thm.1} that Equation~\eqref{eq1}
extends to to a map
\be
\label{eq1e}
\calS^{\mathrm{ev}}_q(S^3\setminus K)
\to \calH_{\BZ[t^{\pm 1}, \Delta_K(t)^{-1}]/\BZ[t^{\pm 1}]}
\ee
from the skein module of the knot complement $M=S^3\setminus K$ so that
$L \subset S^3\setminus K$ goes to the colored Jones polynomial of $K \cup L$ where
we color $K$ with the $n$-dimensional representation of $\fsl_2(\BC)$ and all
components of $L$ with the 2-dimensional representation of $\fsl_2(\BC)$.

Phrased this way, this extended theorem is a relative version
of~\cite[Thm.1.1]{GL:skein} for 3-manifolds with torus boundary, and fits in
the following commutative diagram for a fixed knot $K$ in $S^3$ and for
$\gamma \neq 0$.
\be
\label{cd4}
\begin{tikzpicture}
\draw(0,2) node {$\calS^{\mathrm{ev}}_q(S^3\setminus K)$};
\draw(4,2) node {$\calH_{\BZ[t^{\pm 1}, \Delta_K(t)^{-1}]/\BZ[t^{\pm 1}]}$};
\draw(0,0) node {$\calS_q(S^3_{K,\gamma})$};
\draw(4,0) node {$\calH_{\BZ[N^{-1}]/\BZ}$};
\draw[->](1.2,2)--(2.2,2);
\draw(1.6,2.3) node {$J$};
\draw[->](0,1.5)--(0,0.5);
\draw(-0.6,1) node {$\mathrm{Dehn}_\gamma$};
\draw[->](4,1.5)--(4,0.5);
\draw(4.3,1) node {$\calL_\gamma$};
\draw[->](1,0)--(3,0);
\end{tikzpicture}
\ee

\subsection{Knot polynomials associated to Lie algebras}
\label{sub.lie}

Theorem~\ref{thm.1} points to an obvious extension of the quantum knot
polynomials $J^{\mathfrak{g}}_{K,\lambda}(q) \in \BZ[q^{\pm 1}]$ defined via
representations of highest weight $\lambda$ of a simple Lie algebra
$\mathfrak{g}$, and after a rescaling of $q \mapsto q^{a_\mathfrak{g}}$
for a nonzero rational number $a_\mathfrak{g}$ that only depends
on the Lie algebra but not on the knot or the representation, see~\cite{Lusztig}.

Fix simple roots $\alpha_j$ for $j=1,\dots,r$, where $r$ is the rank of
$\mathfrak{g}$ and consider variables $t=(t_1,\dots,t_r)$ and monomials 
$t_\alpha=\prod_{i=1}^r t_i^{c_i}$ for every positive root
$\alpha=\sum_{i=1}^r c_i \alpha_i$. 
If $\lambda$ is a dominant weight, the specialization $t=q^\lambda$ means that
$t_\alpha=q^{(\lambda,\alpha)}$ for all positive roots $\alpha$.

The proof of the MMR Conjecture~\cite{BG:MMR} states that for every simple Lie
algebra $\mathfrak{g}$ and every dominant weight $\lambda$, we have
\be
\label{MMR}
J^{\mathfrak{g}}_{K,\lambda}(q) \= \frac{1}{\Delta^{\mathfrak{g}}(q^\lambda)} + O(q-1)
\ee
where $\Delta^{\mathfrak{g}}(t) = \prod_{\alpha >0 } \Delta_K(t_\alpha)$.
Combining the MMR Conjecture with Theorem~\ref{thm.1} for
$\mathfrak{g}=\mathfrak{sl}_2$, one arrives at a clean lift of
the colored Jones polynomial of a simple Lie algebra.

\begin{conjecture}
\label{conj.1g} 
There is a map
\be
\label{Jg}
\{ \text{Knots }K\text{ in} \,\, S^3\}
  \to \calH_{\BZ[t^{\pm 1}, \Delta_K^{\mathfrak{g}}(t)^{-1}]/\BZ[t^{\pm 1}]},
\qquad K \mapsto J^\mathfrak{g}_K(t,q) 
\ee
satisfying
\be
\label{eg}
J^{\mathfrak{g}}_K(q^\lambda,q)
= J^{\mathfrak{g}}_{K,\lambda}(q) \in \BZ[q^{\pm 1}]
\subset \calH_\BZ \hookrightarrow \BZ[\![q-1]\!],
\qquad (\text{for all dominant $\lambda$}) \,.
\ee
\end{conjecture}

The above conjecture should imply the analogous phrasings of Corollaries
~\ref{cor.cyclo}, \ref{cor.hab}, \ref{cor.lap1} and \ref{cor.0surgery}
for all simple Lie alegbras. In a sense, the above conjecture bypasses the question
of integral bases for the representations of the quantum group associated to a simple
Lie algebra $\mathfrak{g}$ that generalize the basis that Habiro constructs for
$\mathfrak{sl}_2$~\cite{Habiro:WRT}, while it keeps the functoriality properties
related to Dehn-filling.

\begin{remark}
\label{rem.super}
In fact, Conjecture~\ref{conj.1g} can be formulated for quantum knot polynomials
colored by irreducible representations of simple Lie superalgebras. We will discuss
this extension is a future publication.
\end{remark}

\subsection{Plan of the proof}
\label{sub.plan}

At first, Theorem~\ref{thm.1} feels a daunting task, since
the sought elements of the Habiro ring
$\calH_{\BZ[t^{\pm 1}, \Delta(t)^{-1}]/\BZ[t^{\pm 1}]}$ are collections 
of power series at roots of unity that $p$-adically glue, and in addition
they are topological invariants. However, we can achieve both tasks at the same time
by combining the following ideas. 

\noindent
$\bullet$
A planar projection $D$ of a knot $K$ (or even better, a braid presentation)
leads to a proper $q$-hypergeometric multisum for the colored Jones polynomial. The
shape of the multisum is dictated by the entries of the $R$-matrix, and the
summand is a product of $q$-binomials times $q$-Pochhammer symbols. 

\noindent
$\bullet$
The $q=1$ expansion of the above multisum and its deformations (inserting
independent variables at each crossing), analyzed by Rozansky, combined with the
MacMahon Master theorem and the Burau representation leads to a well-defined series
in $\BZ[t^{\pm 1}, \Delta(t)^{-1}][\![q-1]\!]$.

\noindent
$\bullet$
The Lagrange inversion formula gives further rationality results, as well as a
distinguished rational solution to the associated Nahm equations.

\noindent
$\bullet$
Expanding the $q$-binomials and the $q$-Pochhammer symbols at roots of unity,
and using the above rationality, one obtains collections of series around each
root of unity, following the ideas of~\cite{GSWZ} and~\cite{GW:explicit}.

\noindent
$\bullet$
The gluing of these series follows from the fact that they are expansions of
the same $q$-hypergeometric series.

\noindent
$\bullet$
This proves the existence of a lift $J_K(t,q)$ that satisfies~\eqref{eq1}. Since
elements of these Habiro rings are uniquely determined by their $q-1$-expansions,
the specialization~\eqref{eq1} implies the topological invariance of $J_K(t,q)$.

\subsection*{Acknowledgements} 

The authors wish to thank Peter Scholze, Ferdinand Wagner and Don Zagier for many
enlightening conversations. C.W. has been supported by the Huawei
Young Talents Program at Institut des Hautes Études Scientifiques, France. The
authors wish to thank the Max-Planck Institute for Mathematics and the Institut des
Hautes Études Scientifiques for their hospitality. 


\section{Quantum knot invariants and rationality}
\label{sec.cj}

All quantum knot polynomials (such as the colored Jones polynomials) are obtained
by contractions of tensors from a planar projection of a knot, or alternatively by
taking traces of representations of braid representatives of a knot. In this 
section, we discuss in detail the structure of these state-sum formulas of the
colored Jones polynomial of a knot. As it turns out, these state-sum formulas
lead to elements of the Habiro rings. 

\subsection{A state-sum for the colored Jones polynomial}
\label{sub.state}

The Jones polynomial invariant of links in $S^3$ was defined using representations
of the braid group which are determined by an $R$-matrix, i.e., an endomorphism
$R \in \End(V^{\otimes 2})$ that satisfies the Yang-Baxter equation
\be
R_1 R_2 R_1 = R_2 R_1 R_2 \in \End(V^{\otimes 3})
\ee
where $R_1=R \otimes \mathrm{id}_V$ and $R_2=\mathrm{id}_V \otimes R$.
Such a $R$-matrix, originally discovered by Jones~\cite{Jones} gives rise to
a compatible representation of the braid groups $B_r$ or $r \geq 2$ stands. 
The latter are generated by $\sigma_j$ for $j=1,\dots,r-1$ given by
\begin{center}
\begin{tikzpicture}[scale=1.2]
\draw[thick,->,xshift=-1cm,yshift=0cm](0,0)--(0,1);
\filldraw(-0.4,0.5) circle (0.05cm);
\filldraw(0,0.5) circle (0.05cm);
\filldraw(0.4,0.5) circle (0.05cm);
\draw[thick,->,xshift=1cm,yshift=0cm](0,0)--(0,1);
\draw[thick,->,xshift=2cm,yshift=0cm] (1,0) to[out angle=90,
in angle=-90,curve through = {(0.5,0.5)}] (0,1);
\filldraw[white,xshift=2cm,yshift=0cm](0.5,0.5) circle (0.1cm);
\draw[thick,->,xshift=2cm,yshift=0cm](0,0) to[out angle=90,
in angle=-90, curve through= {(0.5,0.5)}] (1,1);
\draw[thick,->,xshift=4cm,yshift=0cm](0,0)--(0,1);
\filldraw[xshift=5cm](-0.4,0.5) circle (0.05cm);
\filldraw[xshift=5cm](0,0.5) circle (0.05cm);
\filldraw[xshift=5cm](0.4,0.5) circle (0.05cm);
\draw[thick,->,xshift=6cm,yshift=0cm](0,0)--(0,1);

\draw(-1,-0.3) node {$1$};
\draw(1,-0.3) node {$j-1$};
\draw(2,-0.3) node {$j$};
\draw(3,-0.3) node {$j+1$};
\draw(4,-0.3) node {$j+2$};
\draw(6,-0.3) node {$r$};
\end{tikzpicture}
\end{center}
that satisfy the relations
\be
\sigma_j\sigma_{j+1}\sigma_j \= \sigma_{j+1}\sigma_j\sigma_{j+1}
\ee
and $\sigma_i\sigma_j=\sigma_j\sigma_i$ if $|i-j|>1$. Then, an $R$-matrix gives
a sequence of homomorphisms
\be
\label{rhor}
\rho_r: B_r \to \End(V^{\otimes r}), \qquad \s_j \mapsto
\mathrm{id}_{V^{\otimes i-1}} \otimes R \otimes \mathrm{id}_{V^{\otimes r-i-1}} \,.
\ee

%

We will focus on the colored Jones polynomial of a knot, colored by the
$n$-dimensional irreducible representation $V$ of $\mathfrak{sl}_2$. Using the basis
$(e_0,e_1,\dots,e_{n-1})$ of $V$, the $R$-matrix is given by~\cite[Eqn.(2.13)]{Rozansky}
(see also~\cite[Cor.2.32]{Kirby-Melvin}\footnote{Where we substitute
  $(i,j,n,m,m')$ in their formula by $(i-k-n/2,j+k-n/2,k,n/2,n/2)$.})
\be
\begin{aligned}
  R^+(e_i\otimes e_j)
  &\= q^{n^2/4}\sum_{k=0}^\infty R^+_{ij,k}e_{j+k}\otimes e_{i-k}\\
  R^+_{ij,k}
  &\=
  (-1)^k
  q^{ij+k(k-1)/2}t^{-(i+j+k)/2}
  \bigg[\!\!\begin{array}{c} i\\k\end{array}\!\!\bigg]_q(q^{-j}t;q^{-1})_{k}\,,\\
  R^{-}(e_i\otimes e_j)
  &\=q^{-n^2/4}\sum_{k=0}^\infty R^{-}_{ij,k}e_{j-k}\otimes e_{i+k}\,,\\
  R^{-}_{ij,k}
  &\=
  q^{-(i+k)(j-k)}
  t^{(i+j-k)/2}
  \bigg[\!\!\begin{array}{c} j\\k\end{array}\!\!\bigg]_q(q^{-i}t;q^{-1})_{k}
\end{aligned}
\ee
where $t=q^n$ and throughout this paper we take
\be
  \binom{k}{\ell}
  \=
  \bigg[\!\!\begin{array}{c} k\\\ell\end{array}\!\!\bigg]_{q}
  \=
  0
\ee
whenever we have integers $k,\ell$ not satisfying $0\leq\ell\leq k$.

To compute the colored Jones polynomial $J_{K,n}(q)$ of a knot $K$, we represent the
knot as the closure of a braid $\beta \in B_r$. In fact, we will work with long knots,
obtained by the closure of all but the first strand of $\beta$. Then, the colored
Jones polynomial of $K$ is
\be\label{eq:state.sum}
J_{K,n}(q) \= \tr'((\mathrm{id}_V \otimes h^{\otimes r-1})\rho_r(\beta))
\ee
where $h \in \End(V)$ is a diagonal matrix that encodes the contributions of the
caps and $\tr'$ means that we take the trace on all but the first factor of $V$
in $V^{\otimes r}$. Doing so, the colored Jones polynomial
$J_{K,n}(q) \in \BZ[q^{\pm 1}]$ is normalized $1$ at the unknot. 

Now comes an important point. Because the closure of $\beta$ is a knot, the above sums
are terminating and in fact can be extended to the (infinite dimensional) dual
Verma module $W$ of highest weight $n-1$ with basis $(e_0,e_1,e_2,\dots)$.
The formula that one obtains for the colored Jones polynomial is best explained
by an example. 

\begin{example}
\label{ex.944a}
Consider the knot $9_{44}$ represented as the closure of the braid with four strands
\be
\label{b944}
\sigma_1^{-3} \sigma_2^{-1} \sigma_3 \sigma_1^2 \sigma_2^{-1} \sigma_3
\in B_4
\ee
This braid is pictured as follows (composing the braid generators from left to right
in the word, and from bottom to top on the braid):

\begin{center}
\begin{tikzpicture}[scale=1,rotate=-90]
\foreach \x in {1,2,3,4,5,6,7,8} \draw[yshift=\x cm](-0.2,0)--(3.2,0);
\draw[thick,->,xshift=0cm,yshift=1cm](0,0) to[out angle=90, in angle=-90,
curve through= {(0.5,0.5)}] (1,1);
\filldraw[white,xshift=0cm,yshift=1cm](0.5,0.5) circle (0.1cm);
\draw[thick,->,xshift=0cm,yshift=1cm] (1,0) to[out angle=90, in angle=-90,
curve through = {(0.5,0.5)}] (0,1);

\draw[thick,->,xshift=2cm,yshift=1cm](0,0)--(0,1);

\draw[thick,->,xshift=3cm,yshift=1cm](0,0)--(0,1);
\draw[thick,->,xshift=0cm,yshift=2cm](0,0) to[out angle=90, in angle=-90,
curve through= {(0.5,0.5)}] (1,1);
\filldraw[white,xshift=0cm,yshift=2cm](0.5,0.5) circle (0.1cm);
\draw[thick,->,xshift=0cm,yshift=2cm] (1,0) to[out angle=90, in angle=-90,
curve through = {(0.5,0.5)}] (0,1);

\draw[thick,->,xshift=2cm,yshift=2cm](0,0)--(0,1);

\draw[thick,->,xshift=3cm,yshift=2cm](0,0)--(0,1);
\draw[thick,->,xshift=0cm,yshift=3cm](0,0) to[out angle=90, in angle=-90,
curve through= {(0.5,0.5)}] (1,1);
\filldraw[white,xshift=0cm,yshift=3cm](0.5,0.5) circle (0.1cm);
\draw[thick,->,xshift=0cm,yshift=3cm] (1,0) to[out angle=90, in angle=-90,
curve through = {(0.5,0.5)}] (0,1);

\draw[thick,->,xshift=2cm,yshift=3cm](0,0)--(0,1);

\draw[thick,->,xshift=3cm,yshift=3cm](0,0)--(0,1);
\draw[thick,->,xshift=1cm,yshift=4cm](0,0) to[out angle=90, in angle=-90,
curve through= {(0.5,0.5)}] (1,1);
\filldraw[white,xshift=1cm,yshift=4cm](0.5,0.5) circle (0.1cm);
\draw[thick,->,xshift=1cm,yshift=4cm] (1,0) to[out angle=90, in angle=-90,
curve through = {(0.5,0.5)}] (0,1);

\draw[thick,->,xshift=0cm,yshift=4cm](0,0)--(0,1);

\draw[thick,->,xshift=3cm,yshift=4cm](0,0)--(0,1);
\draw[thick,->,xshift=0cm,yshift=5cm] (1,0) to[out angle=90, in angle=-90,
curve through = {(0.5,0.5)}] (0,1);
\filldraw[white,xshift=0cm,yshift=5cm](0.5,0.5) circle (0.1cm);
\draw[thick,->,xshift=0cm,yshift=5cm](0,0) to[out angle=90, in angle=-90,
curve through= {(0.5,0.5)}] (1,1);

\draw[thick,->,xshift=2cm,yshift=5cm] (1,0) to[out angle=90, in angle=-90,
curve through = {(0.5,0.5)}] (0,1);
\filldraw[white,xshift=2cm,yshift=5cm](0.5,0.5) circle (0.1cm);
\draw[thick,->,xshift=2cm,yshift=5cm](0,0) to[out angle=90, in angle=-90,
curve through= {(0.5,0.5)}] (1,1);
\draw[thick,->,xshift=0cm,yshift=6cm] (1,0) to[out angle=90, in angle=-90,
curve through = {(0.5,0.5)}] (0,1);
\filldraw[white,xshift=0cm,yshift=6cm](0.5,0.5) circle (0.1cm);
\draw[thick,->,xshift=0cm,yshift=6cm](0,0) to[out angle=90, in angle=-90,
curve through= {(0.5,0.5)}] (1,1);

\draw[thick,->,xshift=2cm,yshift=6cm](0,0)--(0,1);

\draw[thick,->,xshift=3cm,yshift=6cm](0,0)--(0,1);
\draw[thick,->,xshift=1cm,yshift=7cm](0,0) to[out angle=90, in angle=-90,
curve through= {(0.5,0.5)}] (1,1);
\filldraw[white,xshift=1cm,yshift=7cm](0.5,0.5) circle (0.1cm);
\draw[thick,->,xshift=1cm,yshift=7cm] (1,0) to[out angle=90, in angle=-90,
curve through = {(0.5,0.5)}] (0,1);

\draw[thick,->,xshift=0cm,yshift=7cm](0,0)--(0,1);

\draw[thick,->,xshift=3cm,yshift=7cm](0,0)--(0,1);
\draw[thick,->,xshift=2cm,yshift=8cm] (1,0) to[out angle=90, in angle=-90,
curve through = {(0.5,0.5)}] (0,1);
\filldraw[white,xshift=2cm,yshift=8cm](0.5,0.5) circle (0.1cm);
\draw[thick,->,xshift=2cm,yshift=8cm](0,0) to[out angle=90, in angle=-90,
curve through= {(0.5,0.5)}] (1,1);

\draw[thick,->,xshift=0cm,yshift=8cm](0,0)--(0,1);

\draw[thick,->,xshift=1cm,yshift=8cm](0,0)--(0,1);
\end{tikzpicture}
\end{center}

We can compute the quantum invariant by taking the trace.
This reduces to a sum over variables attached to each crossing.
This induces labeling on the edges since we leave the first edge fixed with
label $0$, we get Figure~\ref{fig:9_44}.
\begin{figure}
\begin{center}
\begin{tikzpicture}[scale=2]
\foreach \x in {2,3,4,5,6,7,8} \draw[yshift=\x cm,opacity=0.25](-0.2,0)--(3.2,0);

\draw[yshift=1 cm,opacity=0.25](-0.2,0)--(5.2,0);
\draw[yshift=9 cm,opacity=0.25](-0.2,0)--(5.2,0);
\draw[thick,->,xshift=0cm,yshift=1cm](0,0) to[out angle=90, in angle=-90,
curve through= {(0.5,0.5)}] (1,1);
\filldraw[white,xshift=0cm,yshift=1cm](0.5,0.5) circle (0.1cm) node[red,right]
{$\;\; k_1$};
\draw[thick,->,xshift=0cm,yshift=1cm] (1,0) to[out angle=90, in angle=-90,
curve through = {(0.5,0.5)}] (0,1);

\draw[thick,->,xshift=2cm,yshift=1cm](0,0)--(0,1);

\draw[thick,->,xshift=3cm,yshift=1cm](0,0)--(0,1);

\draw[thick,xshift=2.05cm,yshift=1cm,green](0,0)--(0,1);

\draw[thick,->,xshift=0cm,yshift=2cm](0,0) to[out angle=90, in angle=-90,
curve through= {(0.5,0.5)}] (1,1);
\filldraw[white,xshift=0cm,yshift=2cm](0.5,0.5) circle (0.1cm) node[red,right]
{$\;\; k_2$};
\draw[thick,->,xshift=0cm,yshift=2cm] (1,0) to[out angle=90, in angle=-90,
curve through = {(0.5,0.5)}] (0,1);

\draw[thick,->,xshift=2cm,yshift=2cm](0,0)--(0,1);

\draw[thick,->,xshift=3cm,yshift=2cm](0,0)--(0,1);

\draw[thick,xshift=2.05cm,yshift=2cm,green](0,0)--(0,1);

\draw[thick,->,xshift=0cm,yshift=3cm](0,0) to[out angle=90, in angle=-90,
curve through= {(0.5,0.5)}] (1,1);
\filldraw[white,xshift=0cm,yshift=3cm](0.5,0.5) circle (0.1cm) node[red,right]
{$\;\; k_3$};
\draw[thick,->,xshift=0cm,yshift=3cm] (1,0) to[out angle=90, in angle=-90,
curve through = {(0.5,0.5)}] (0,1);

\draw[thick,->,xshift=2cm,yshift=3cm](0,0)--(0,1);

\draw[thick,->,xshift=3cm,yshift=3cm](0,0)--(0,1);

\draw[thick,xshift=2.05cm,yshift=3cm,green](0,0)--(0,1);
\draw[thick,->,xshift=1cm,yshift=4cm](0,0) to[out angle=90, in angle=-90,
curve through= {(0.5,0.5)}] (1,1);
\filldraw[white,xshift=1cm,yshift=4cm](0.5,0.5) circle (0.1cm) node[red,right]
{$\;\; k_4$};
\draw[thick,->,xshift=1cm,yshift=4cm] (1,0) to[out angle=90, in angle=-90,
curve through = {(0.5,0.5)}] (0,1);

\draw[thick,->,xshift=0cm,yshift=4cm](0,0)--(0,1);

\draw[thick,->,xshift=3cm,yshift=4cm](0,0)--(0,1);

\draw[thick,->,xshift=2.05cm,yshift=4cm,green](0,0)--(0,0.5);
\draw[thick,xshift=2.05cm,yshift=4cm,green](0,0.5)--(0,1);
\draw[thick,->,xshift=0cm,yshift=5cm] (1,0) to[out angle=90, in angle=-90,
curve through = {(0.5,0.5)}] (0,1);
\filldraw[white,xshift=0cm,yshift=5cm](0.5,0.5) circle (0.1cm) node[red,right]
{$\;\; k_7$};
\draw[thick,->,xshift=0cm,yshift=5cm](0,0) to[out angle=90, in angle=-90,
curve through= {(0.5,0.5)}] (1,1);

\draw[thick,->,xshift=2cm,yshift=5cm] (1,0) to[out angle=90, in angle=-90,
curve through = {(0.5,0.5)}] (0,1);
\filldraw[white,xshift=2cm,yshift=5cm](0.5,0.5) circle (0.1cm) node[red,right]
{$\;\; k_5$};
\draw[thick,->,xshift=2cm,yshift=5cm](0,0) to[out angle=90, in angle=-90,
curve through= {(0.5,0.5)}] (1,1);

\draw[thick,xshift=2.05cm,yshift=5cm,green](0,0) to[out angle=90, in angle=-90,
curve through= {(0.5,0.5)}] (1,1);
\draw[thick,->,xshift=0cm,yshift=6cm] (1,0) to[out angle=90, in angle=-90,
curve through = {(0.5,0.5)}] (0,1);
\filldraw[white,xshift=0cm,yshift=6cm](0.5,0.5) circle (0.1cm) node[red,right]
{$\;\; k_8$};
\draw[thick,->,xshift=0cm,yshift=6cm](0,0) to[out angle=90, in angle=-90,
curve through= {(0.5,0.5)}] (1,1);

\draw[thick,->,xshift=2cm,yshift=6cm](0,0)--(0,1);

\draw[thick,->,xshift=3cm,yshift=6cm](0,0)--(0,1);

\draw[thick,xshift=3.05cm,yshift=6cm,green](0,0)--(0,1);
\draw[thick,->,xshift=1cm,yshift=7cm](0,0) to[out angle=90, in angle=-90,
curve through= {(0.5,0.5)}] (1,1);
\filldraw[white,xshift=1cm,yshift=7cm](0.5,0.5) circle (0.1cm) node[red,right]
{$\;\; k_9$};
\draw[thick,->,xshift=1cm,yshift=7cm] (1,0) to[out angle=90, in angle=-90,
curve through = {(0.5,0.5)}] (0,1);

\draw[thick,->,xshift=0cm,yshift=7cm](0,0)--(0,1);

\draw[thick,->,xshift=3cm,yshift=7cm](0,0)--(0,1);

\draw[thick,xshift=3.05cm,yshift=7cm,green](0,0)--(0,1);
\draw[thick,->,xshift=2cm,yshift=8cm] (1,0) to[out angle=90, in angle=-90,
curve through = {(0.5,0.5)}] (0,1);
\draw[thick,xshift=2.05cm,yshift=8cm,green] (1,0) to[out angle=90, in angle=-90,
curve through = {(0.5,0.5)}] (0,1);
\filldraw[white,xshift=2cm,yshift=8cm](0.5,0.5) circle (0.1cm) node[red,right]
{$\;\; k_6$};
\draw[thick,->,xshift=2cm,yshift=8cm](0,0) to[out angle=90, in angle=-90,
curve through= {(0.5,0.5)}] (1,1);

\draw[thick,->,xshift=0cm,yshift=8cm](0,0)--(0,1);

\draw[thick,->,xshift=1cm,yshift=8cm](0,0)--(0,1);
\draw[thick,->](3,9) to[out angle=90, in angle=90, curve through= {(3.5,9.5)}] (4,9);
\draw[thick,->](4,9)--(4,1);
\draw[thick,->](4,1) to[out angle=-90, in angle=-90, curve through= {(3.5,0.5)}] (3,1);

\draw[thick,->](2,9) to[out angle=90, in angle=90, curve through= {(3.25,9.75)}] (4.5,9);
\draw[thick,->](4.5,9)--(4.5,1);
\draw[thick,->](4.5,1) to[out angle=-90, in angle=-90,
curve through= {(3.25,0.25)}] (2,1);

\draw[thick,green](2.05,9) to[out angle=90, in angle=90, curve through= {(3.25,9.7)}]
(4.45,9);
\draw[thick,green](4.45,9)--(4.45,1);
\draw[thick,green](4.45,1) to[out angle=-90, in angle=-90,
curve through= {(3.25,0.3)}] (2.05,1);

\draw[thick,->](1,9) to[out angle=90, in angle=90, curve through= {(3,10)}] (5,9);
\draw[thick,->](5,9)--(5,1);
\draw[thick,->](5,1) to[out angle=-90, in angle=-90, curve through= {(3,0)}] (1,1);


\draw[thick,->](0,0)--(0,1);
\draw[thick,->](0,9)--(0,10);
\draw[blue](0.1,0.5) node {\tiny $0$};
\draw[blue](1,2.1) node[right] {\tiny $k_1$};
\draw[blue](-0.15,3.1) node[rotate=90] {\tiny $k_1-k_2$};
\draw[blue](1,4.1) node {\tiny $k_1-k_2+k_3$};
\draw[blue](2,5.1) node {\tiny $k_1-k_2+k_3+k_4$};
\draw[blue](3.15,7) node[rotate=-90] {\tiny $k_1-k_2+k_3+k_4-k_5$};
\draw[blue](4.6,5) node[rotate=-90] {\tiny $k_1-k_2+k_3+k_4-k_5+k_6$};
\draw[blue](1,4.9) node {\tiny $k_1-k_2+k_3-k_5+k_6$};
\draw[blue](-0.15,6) node[rotate=90] {\tiny $k_1-k_2+k_3-k_5+k_6+k_7$};
\draw[blue](1,7.1) node {\tiny $k_1-k_2+k_3-k_5$};
\draw[blue](1,6.9) node {\tiny $+k_6+k_7-k_8$};
\draw[blue](2,8.1) node {\tiny $k_1-k_2+k_3-k_5$};
\draw[blue](2,7.9) node {\tiny $+k_6+k_7-k_8+k_9$};
\draw[blue](4.1,5) node[rotate=-90] {\tiny $k_1-k_2+k_3-k_5+k_7-k_8+k_9$};
\draw[blue](2,6.5) node {\tiny $k_1-k_2+k_3+k_7-k_8+k_9$};
\draw[blue](5.1,5) node[rotate=-90] {\tiny $k_1-k_2+k_3+k_7-k_8$};
\draw[blue](-0.15,1.7) node[rotate=90] {\tiny $-k_2+k_3+k_7-k_8$};
\draw[blue](1,3.1) node {\tiny $k_3+k_7-k_8$};
\draw[blue](-0.15,4.2) node[rotate=90] {\tiny $k_7-k_8$};
\draw[blue](1,6.1) node {\tiny $-k_8$};
\draw[blue](0.1,9.5) node {\tiny $0$};
\end{tikzpicture}
\end{center}
\caption{Braid closure for $9_{44}$ with \textcolor{blue}{strand} and
  \textcolor{red}{crossing} variables labeled. It also depicts the
  \textcolor{green}{cycle} associated with the crossing labeled by $k_4$.}
\label{fig:9_44}
\end{figure}

The colored Jones polynomial $J_{9_{44},n}(q)$ is the evaluation at $t=q^{n-1}$
(a terminating sum for all $n \geq 1$) of the expression
\be
\label{eq:ex.quant.inv}
\begin{small}
\begin{aligned}
  &\!\!\!\sum_{k}(-1)^{k_5+k_6+k_7+k_8}q^{\frac{1}{2}Q(k)-3k_1 + 3k_2 - 3k_3
    - k_4 + \frac{3}{2}k_5 - \frac{3}{2}k_6 - \frac{5}{2}k_7
    + \frac{3}{2}k_8 - k_9}t^{-k_2-k_4+k_5-k_6-k_8-k_9+2}\\
&\!\!\!\bigg[\!\!\begin{array}{c} k_1-k_2+k_3+k_7-k_8\\k_1\end{array}\!\!\bigg]_{\!q}
\bigg[\!\!\begin{array}{c} k_1\\k_2\end{array}\!\!\bigg]_{\!q}\!
\bigg[\!\!\begin{array}{c} k_3+k_7-k_8\\k_3\end{array}\!\!\bigg]_{\!q}\\
&\!\!\!\bigg[\!\!\begin{array}{c} k_1-k_2+k_3+k_4-k_5+k_6\\
  k_4\end{array}\!\!\bigg]_{\!q}
\bigg[\!\!\begin{array}{c} k_1-k_2+k_3+k_4\\k_5\end{array}\!\!\bigg]_{\!q}
\bigg[\!\!\begin{array}{c} k_1-k_2+k_3-k_5+k_6+k_7-k_8+k_9\\
  k_6\end{array}\!\!\bigg]_{\!q}\\
&\!\!\!\bigg[\!\!\begin{array}{c} k_7-k_8\\k_7\end{array}\!\!\bigg]_{\!q}
\bigg[\!\!\begin{array}{c} k_1-k_2+k_3-k_5+k_6+k_7\\k_8\end{array}\!\!\bigg]_{\!q}
\bigg[\!\!\begin{array}{c} k_1-k_2+k_3+k_7-k_8+k_9\\k_9\end{array}\!\!\bigg]_{\!q}\\
&\!\!\!
(t;q^{-1})_{k_1}
(q^{k_2-k_3-k_7+k_8}t;q^{-1})_{k_2}
(q^{-k_1+k_2}t;q^{-1})_{k_3}(q^{-k_1+k_2-k_3}t;q^{-1})_{k_4}\\
&\!\!\!
(q^{-k_1+k_2-k_3+k_5-k_7+k_8-k_9}t;q^{-1})_{k_5}
(q^{-k_1+k_2-k_3-k_4+k_5}t;q^{-1})_{k_6}
(q^{-k_1+k_2-k_3+k_5-k_6}t;q^{-1})_{k_7}\\
&\!\!\!(q^{k_8}t;q^{-1})_{k_8}
(q^{-k_1+k_2-k_3+k_5-k_6-k_7+k_8}t;q^{-1})_{k_9}
\end{aligned}
\end{small}
\ee
associated to the braid~\eqref{b944} with $Q$ given below in Equation~\eqref{AB944}.
\end{example}
The above state-sum can be written in the form

\be
\label{ABt}
\begin{aligned}
&f_{A,B,\nu,\ve}(t,q)\\
&=
t^{\frac{r-1}{2}}\sum_{k\in\BZ_{\geq0}^N}
q^{\frac{1}{2}Q(k)-\nu(k)}
\prod_{i=1}^N(-\ve_iq^{\frac{-\ve_i-1}{4}}t^{-\frac{1}{2}})^{k_i}
\bigg[\!\!\begin{array}{c} (A^{t}k)_i \\
k_i\end{array}\!\!\bigg]_{q}
(q^{-(B^{t} k)_i}t;q^{-1})_{k_i}
t^{-\frac{\ve_i}{2} ((A^{t}k)_i + (B^{t}k)_i+1)},
\end{aligned} 
\ee
which can be viewed as an element $f_{A,B,\nu,\ve}(t,q) \in \BZ[q^{\pm 1}][\![t-1]\!]$, 
\be
\label{Jnf}
J_{K,n}(q) \= f_{A,B,\nu,\ve}(q^{n-1},q), \qquad (n >0),
\ee
and where
\begin{itemize}
\item
  $k_i$ is the variable associated to the $i$-crossing for $i=1,\dots,N$,
\item
  $\ve=(\ve_1,\dots,\ve_N)$ records the signs of the crossings,
  with $\ve_i=1$ (resp., $-1$) if $i$ is a positive (resp., negative) crossing,
\item
  $A$ and $B$ are square matrices with rows and columns indexed by the crossings,
  where the columns of $A$ store the linear forms in the binomial coefficients
  and the columns of $B$ store the linear forms that appear in
  the $q$-Pochhammer symbols,
\item
  $Q=Q_{A,B,\ve}$ is a quadratic form determined purely by $A,B,\ve$
  \be
  Q_{A,B,\ve}(k)
  \=
  \sum_{i:\ve_i=1}
  2(A^{t}k)_i(B^{t}k)_i+k_i^2
  -
  \sum_{i:\ve_i=-1}
  2((A^{t}k)_i-k_i)((B^{t}k)_i+k_i)\,,
  \ee
\item
  $\nu=\sum_{i=1}^{r-1}\nu_i$ is the sum of the linear forms in strands forming
  the closure of the braid.
\end{itemize}

In the above example, we have $N=9$ and

\be
\label{AB944}
\begin{tiny}
\begin{aligned}
A
\=
\begin{pmatrix}
  1 & 1 & 0 & 1 & 1 & 1 & 0 & 1 & 1\\
  -1 & 0 & 0 & -1 & -1 & -1 & 0 & -1 & -1\\
  1 & 0 & 1 & 1 & 1 & 1 & 0 & 1 & 1\\
  0 & 0 & 0 & 1 & 1 & 0 & 0 & 0 & 0\\
  0 & 0 & 0 & -1 & 0 & -1 & 0 & -1 & 0\\
  0 & 0 & 0 & 1 & 0 & 1 & 0 & 1 & 0\\
  1 & 0 & 1 & 0 & 0 & 1 & 1 & 1 & 1\\
  -1 & 0 & -1 & 0 & 0 & -1 & -1 & 0 & -1\\
  0 & 0 & 0 & 0 & 0 & 1 & 0 & 0 & 1
\end{pmatrix}, \quad
B \= \begin{pmatrix}
  0 & 0  & 1 & 1 & 1    &  1 &  1 &  1\\
  0 & -1 & -1 & -2 & -1 & -1 & -1 & -1\\
  0 & 1 & 0 & 1 & 1     &  1 &  1 &  1\\
  0 & 0 & 0 & 0 & 0     &  1 &  0 &  0\\
  0 & 0 & 0 & 0 & -1    & -1 & -1 & -1\\
  0 & 0 & 0 & 0 & 0     &  0 &  1 &  1\\
  0 & 1 & 0 & 0 & 1     &  0 &  0 &  1\\
  0 & -1 & 0 & 0 & -1   &  0 &  0 & -1\\
  0 & 0 & 0 & 0 & 1    &   0 &  0 & 0
\end{pmatrix},
\end{aligned}
\end{tiny}
\ee
and
\be
  Q\;=
  \begin{pmatrix}
    0&1&-2&1&-1&-1&-2&1&1\\
    1&0&1&-1&1&1&1&0&-1\\
    -2&1&0&1&-1&-1&0&-1&1\\
    1&-1&1&0&-1&0&2&-2&2\\
    -1&1&-1&-1&3&-1&-1&2&-1\\
    -1&1&-1&0&-1&1&0&-1&0\\
    -2&1&0&2&-1&0&-1&1&-1\\
    1&0&-1&-2&2&-1&1&-1&1\\
    1&-1&1&2&-1&0&-1&1&0
  \end{pmatrix},\quad
  \nu \;= \begin{pmatrix}
  3 \\ -3 \\ 3 \\ 1 \\ -2 \\ 1 \\ 2 \\ -2 \\ 1
  \end{pmatrix},\quad
  \ve \;= \begin{pmatrix}
  -1 \\ -1 \\ -1 \\ -1 \\ 1 \\ 1 \\ 1 \\ 1 \\ -1
  \end{pmatrix}.
\ee
We find that
\be
\begin{aligned}
J_{9_{44},1}(q) &\= 1\,,\\
J_{9_{44},2}(q) &\= q^{-2} - 2q^{-1} + 3 - 3q + 3q^2 - 2q^3 + 2q^4 - q^5\,,\\
J_{9_{44},3}(q) &\= -q^{-5} + 2q^{-4} + q^{-3} - 5q^{-2} + 4q^{-1} + 4
- 9q + 4q^2 + 6q^3 - 9q^4 + 2q^5 + 7q^6\\
&\qquad\quad - 7q^7 - q^8 + 7q^9 - 4q^{10} - 3q^{11} + 5q^{12} - q^{13}
- 2q^{14}\,,\\
J_{9_{44},4}(q) &\= -q^{-13} + q^{-12} + q^{-11} + 2q^{-10} - 4q^{-9}
- 4q^{-8} + 4q^{-7} + 8q^{-6} - 3q^{-5}\\
&\qquad\quad - 13q^{-4} + q^{-3} + 18q^{-2} + q^{-1} - 18 - 5q + 20q^2
+ 6q^3 - 18q^4 - 8q^5\\
&\qquad\quad + 17q^6 + 7q^7 - 13q^8 - 9q^9 + 11q^{10} + 8q^{11}
- 5q^{12} - 10q^{13} + 3q^{14} + 8q^{15}\\
&\qquad\quad + 3q^{16} - 8q^{17} - 6q^{18} + 5q^{19} + 8q^{20}
- 2q^{21} - 8q^{22} - q^{23} + 7q^{24} + 2q^{25}\\
&\qquad\quad - 4q^{26} - 2q^{27} + q^{28} + 2q^{29} - q^{30}\,.
\end{aligned}
\ee

\subsection{The $q\to 1$ limit, the Burau representation and the Alexander
  polynomial}
\label{sub.q=1}

Having presented the colored Jones polynomial in Equation~\eqref{Jnf} as the
evaluation of a series $f_{A,B,\nu,\ve}(t,q)$, we next discuss its $q=1$ limit 
$f_{A,B,\nu,\ve}(t,1) \in \BZ[\![t-1]\!]$. The key observation is the $q=1$
limit the $R$-matrices exists and is given by:
\be
\label{eq.R=1}
\begin{aligned}
R^+_{ij,k}|_{q=1}
\=
(-1)^k
t^{-(i+j+k)/2}
\binom{i}{k}(1-t)^{k}
\,,\quad
R^{-}_{ij,k}|_{q=1}
\=
t^{(i+j-k)/2}
\binom{j}{k}(1-t)^{k}\,.
\end{aligned}
\ee
This and Equation~\eqref{ABt} implies that
\be
\label{AB1}
\begin{aligned}
f_{A,B,\nu,\ve}(t,1)
&= t^{\frac{r-1-\sum_{i=1}^N \ve_i}{2}} 
\sum_{k\in\BZ_{\geq0}^N}
\prod_{i=1}^N(-\ve_i t^{-\frac{1}{2}})^{k_i}
\binom{(A^{t}k)_i}{k_i} 
(1-t)^{k_i}
t^{-\frac{\ve_i}{2} ((A^{t}k)_i + (B^{t}k)_i)} \,.
\end{aligned} 
\ee
We will see that $f_{A,B,\nu,\ve}(t,1)$ can be expressed as a sum over symmetric powers
of a single operator. To do so, we use the
additive identification $W=k[x]$ where $e_i \leftrightarrow x^i$ which 
induces an additive identification $W^{\otimes r}$ with $\BZ[x_1,\dots,x_r]$.
In particular, $e_0\otimes e_1=x_1$ and $e_1\otimes e_0=x_2$.

Consider the endomorphism $\calR^+ \in \End(\BZ[x_1,x_2])$ defined by
\be
\calR^+(x_1)\=y x_1+z x_2\,,\qquad
\calR^+(x_2)\=z x_1\,,\qquad
\calR^+
\=
\begin{pmatrix}
y & z\\
z & 0
\end{pmatrix}
\ee
for fixed $y,z$. 
Notice that $\mathrm{Sym}^{i+j}\BZ[x_1,x_2]$ has dimension $i+j+1$ and that
\be
\label{eq.calR+}
\mathrm{Sym}^{i+j}(\calR^+)(x_1^ix_2^{j})
\=
(yx_1+zx_2)^i(zx_1)^{j}
\=
\sum_k\binom{i}{k}y^kz^{i+j-k}x_1^{j+k}x_2^{i-k}\,.
\ee
Likewise we define the endomorphism $\calR^- \in \End(\BZ[x_1,x_2])$
\be
\calR^-(x_1)\=z^{-1} x_2\,,\qquad
\calR^-(x_2)\=z^{-1} x_1 + y x_2\,,\qquad
\calR^-
\=
\begin{pmatrix}
0 & z^{-1} \\
z^{-1} & y
\end{pmatrix}
\ee
for fixed $y,z$ and compute
\be
\label{eq.calR-}
\mathrm{Sym}^{i+j}(\calR^-)(x_1^ix_2^{j})
\=
(z^{-1}x_2)^{i}(z^{-1}x_1+yx_2)^j
\=
\sum_k\binom{j}{k}y^kz^{-i-j+k}x_1^{j-k}x_2^{i+k}\,.
\ee
Comparing Equation~\eqref{eq.R=1} with~\eqref{eq.calR+} and ~\eqref{eq.calR-},
we see that 
\be
\label{tsub}
\calR^\ve|_{y=(1-t^{-\ve}), z=t^{-1/2}} \= R^\ve|_{q=1}, \qquad \ve=+,- \,.
\ee  
It follows that, the $q \to 1$ limit $\rho_r|_{q=1}$ of the representations $\rho_r$
from~\eqref{rhor} exists
\be
\rho_r|_{q=1} : B_r \to \End(\BZ[x_1,\dots,x_r]) 
\ee
for all $r \geq 2$, and is equal to the Burau representation (see~\cite{Burau}
and the modern version~\cite{Birman}, up to a scalar renormalization)
\be
\fb_r: B_r \to \End(U_r), \qquad U_r=\BZ x_1 + \dots + \BZ x_r  
\ee
determined by
\be
\fb_2(\s_1) \=
\begin{pmatrix}
  1-t^{-1} & t^{-1/2} \\ t^{-1/2} & 0
\end{pmatrix}, \qquad \fb_2(\s_1^{-1})
\=
\begin{pmatrix}
  0 & t^{1/2} \\ t^{1/2} & 1-t
\end{pmatrix} \,.
\ee
Identifying $U_r$ with the degree 1 part of $\BZ[x_1,\dots,x_r]$ (where each
$x_i$ has degree 1), we obtain that $k[x_1,\dots,x_r]=S(U_r)$
where $S(W)$ is the symmetric algebra of $W$.

The key is the following lemma, which follows
from Equations~\eqref{eq.calR+}~and~\eqref{eq.calR-}.

\begin{lemma}
\label{lem.beta}
For all $r \geq 2$, we have
\be
\label{rhoslass}
\rho_r|_{q=1} \= S(\fb_r) \,. 
\ee
\end{lemma}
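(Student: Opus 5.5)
The identity \eqref{rhoslass} is an equality of two representations of $B_r$ on the same module $\BZ[x_1,\dots,x_r]=S(U_r)$. Both sides are group homomorphisms: $\rho_r|_{q=1}$ because $\rho_r$ is one and specializing $q=1$ is a ring map on matrix entries, and $S(\fb_r)$ because the symmetric algebra is functorial. It therefore suffices to check the identity on the generators $\s_j^{\pm1}$, $j=1,\dots,r-1$.

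By \eqref{rhor}, $\rho_r(\s_j^{\pm1})$ acts as $R^{\pm}$ on the tensor factors $j,j+1$ and as the identity elsewhere. Under $W^{\otimes r}\cong\BZ[x_1,\dots,x_r]$, a basis vector $e_{a_1}\otimes\cdots\otimes e_{a_r}$ is the monomial $x_1^{a_1}\cdots x_r^{a_r}$. So $\rho_r|_{q=1}(\s_j^{\pm1})$ is the $\BZ$-linear map that fixes every $x_l$ with $l\neq j,j+1$ as a multiplicative factor and applies $R^{\pm}|_{q=1}$ to the part $x_j^{a_j}x_{j+1}^{a_{j+1}}$.

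On the other side, $\fb_r(\s_j^{\pm1})$ is the identity on $x_l$ for $l\neq j,j+1$ and is the $2\times2$ block $\fb_2(\s_1^{\pm1})$ on $\BZ x_j+\BZ x_{j+1}$. Hence $S(\fb_r)(\s_j^{\pm1})$ is the algebra endomorphism that fixes these $x_l$ and acts by $\mathrm{Sym}(\fb_2(\s_1^{\pm1}))$ on polynomials in $x_j,x_{j+1}$. Since an algebra endomorphism of a polynomial ring that fixes the other variables factors over monomials, both operators agree on every monomial once they agree on $\BZ[x_j,x_{j+1}]$. This reduces the lemma to the case $r=2$, in each degree $i+j$.

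For $r=2$, \eqref{eq.calR+} and \eqref{eq.calR-} compute $\mathrm{Sym}^{i+j}(\calR^\pm)(x_1^ix_2^j)$ explicitly. By \eqref{tsub}, substituting $y=1-t^{\mp1}$ and $z=t^{-1/2}$ makes these coefficients equal to those of \eqref{eq.R=1}, and also gives $\calR^\pm=\fb_2(\s_1^{\pm1})$ in degree one. Note that $1-t^{+1}$ together with the factor $t^{1/2}$ reproduces the $(1-t)^k$ of \eqref{eq.R=1}.

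The main care needed is bookkeeping, which I would verify explicitly. First, the scalar normalizations $q^{\pm n^2/4}$ and the powers of $t$ must match, which they do "up to a scalar renormalization" as stated for the Burau representation. Second, the sign $(-1)^k$ must be matched against $y^k=(1-t^{-1})^k$ combined with the $t$-powers, using
\[
(1-t)^k t^{-k}=(-1)^k(1-t^{-1})^k.
\]
Third, the index convention $e_0\otimes e_1=x_1$ means the exponent of $x_1$ records the label of the second tensor factor, so the assignment of $x_1,x_2$ to tensor slots must be checked to agree with the target $e_{j+k}\otimes e_{i-k}$. Once these conventions are aligned, the lemma follows.
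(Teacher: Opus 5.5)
Your proposal is correct and is essentially the paper's argument, which reads the lemma off from \eqref{eq.calR+}, \eqref{eq.calR-} and \eqref{tsub}; your reduction to the generators $\s_j^{\pm1}$ and then to $r=2$ only makes explicit what the paper leaves implicit. Your third bookkeeping check succeeds with the natural identification $e_a\otimes e_b\leftrightarrow x_1^ax_2^b$: then $R^+|_{q=1}(e_1\otimes e_0)=(1-t^{-1})\,e_1\otimes e_0+t^{-1/2}\,e_0\otimes e_1$ is exactly $\calR^+(x_1)=yx_1+zx_2$. It fails with the paper's literal ``$e_0\otimes e_1=x_1$'', which would produce the swap-conjugate of $\fb_2(\s_1)$, so you should commit to the natural identification rather than leave the point open.
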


However, the state-sum formula~\eqref{AB1} is not related to the trace of the 
$S(\fb_r)$, but instead to the partial trace where we close all but the first
component of the braid; see Equation~\eqref{eq:state.sum}. To accommodate for this,
we need the following trimmed modification
\be
\label{trim}
S(\fb^\trim_r): \BZ[x_2,\dots,x_r] \subset \BZ[x_1,\dots,x_r] \stackrel{S(\fb_r)}{\to}
\BZ[x_1,\dots,x_r] \stackrel{x_1=0}{\to} \BZ[x_2,\dots,x_r] 
\ee
of the Burau representation. The $(r-1)\times (r-1)$ matrix of $\fb^\trim_r$ equals
to the matrix of $\fb^\trim_r$ obtained by removing the first row and column.
The above discussion implies the following.

\begin{lemma}
\label{lem.beta2}
For all $r \geq 2$, we have
\be
\label{rhoslass1}
f_{A,B,\nu,\ve}(t,1) \= t^{\tfrac{r-1-\text{writhe}}{2}} \tr\;S(\fb^\trim_r(\beta))
\ee
where $\text{writhe}=\sum_{i=1}^N \ve_i$ is the sum of the signs of the crossings of
$\beta$. 
Moreover, the matrix $\fb^\trim_r$ is nilpotent at $t=1$, hence the above sum
is a well-defined element of $\BZ[\![t-1]\!]$. 
\end{lemma}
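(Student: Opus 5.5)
We compute $f_{A,B,\nu,\ve}(t,1)$ directly from the state sum \eqref{eq:state.sum}, specialized at $q=1$, and identify it crossing by crossing with the trace of $S(\fb^\trim_r(\beta))$. By \eqref{Jnf} and \eqref{eq:state.sum}, $f_{A,B,\nu,\ve}(t,q)$ is the partial trace $\tr'((\mathrm{id}\otimes h^{\otimes r-1})\rho_r(\beta))$ on the dual Verma module $W^{\otimes r}$, with the first strand's input and output fixed to $e_0$. This is a sum over crossing labels $k\in\BZ_{\geq 0}^N$, and the strand labels are determined linearly by $k$ (as in Figure~\ref{fig:9_44}).

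At $q=1$ the $R$-matrix entries become \eqref{eq.R=1}. By \eqref{tsub}, each entry coincides with the matrix entry of $\mathrm{Sym}(\calR^{\pm})$ in the monomial basis $x_1^ix_2^j$, up to a scalar. Lemma~\ref{lem.beta} gives $\rho_r|_{q=1}=S(\fb_r)$ as operators on $\BZ[x_1,\dots,x_r]$. The factors $q^{\pm n^2/4}$ and $h$ become powers of $t^{1/2}$ at $q=1$: $h$ is diagonal with entries $q^{\text{linear}}t^{\pm1/2}$-type, so at $q=1$ it is a scalar times identity on each strand. These scalars, together with the normalization of $\calR^\pm$ against $\fb_r$, contribute the global prefactor $t^{(r-1-\text{writhe})/2}$. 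We will check this by matching the prefactor in \eqref{AB1}, which reads $t^{(r-1-\sum\ve_i)/2}$.

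Next we handle the first strand. Fixing input and output labels to $0$ on strand $1$ means restricting $S(\fb_r(\beta))$ to monomials free of $x_1$ and then projecting to $x_1=0$. For the block-triangular structure we need to use that $S(\fb_r(\beta))$ is a graded algebra map, so a monomial in $x_2,\dots,x_r$ maps to a product of images of the $x_j$. Setting $x_1=0$ is a ring homomorphism, so this composite equals $S$ of the linear map $\fb^\trim_r(\beta)$. That linear map is the $(r-1)\times(r-1)$ submatrix obtained by deleting the first row and column. Summing the diagonal coefficients over all monomials in $x_2,\dots,x_r$ gives $\tr\,S(\fb^\trim_r(\beta))$, which yields \eqref{rhoslass1}.

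For convergence we need nilpotence at $t=1$. At $t=1$ we have $\fb_2(\s_1^{\pm1})=\left(\begin{smallmatrix}0&1\\1&0\end{smallmatrix}\right)$, so $\fb_r(\beta)|_{t=1}$ is the permutation matrix of the permutation $\pi$ of $\beta$. Because the closure is a knot, $\pi$ is a single $r$-cycle. Removing the row and column of index $1$ from the permutation matrix of an $r$-cycle gives the adjacency matrix of a path, since the cycle is cut at $1$, and that matrix is nilpotent. Hence $\fb^\trim_r(\beta)=M_0+(t-1)M_1(t)$ with $M_0$ nilpotent.

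Then $\tr\,S(M)=\sum_d \tr\,\mathrm{Sym}^d(M)$, and each term lies in $(t-1)^{c(d)}\BZ[t^{\pm1/2}]$ with $c(d)\to\infty$. We would get this either from MacMahon's identity $\tr S(M)=\det(1-M)^{-1}$, with $\det(1-M)|_{t=1}=1$, or by noting that $\mathrm{Sym}^d(M_0)$ is nilpotent and degree-preserving filtered. The main obstacle is the bookkeeping of the scalar $t$-powers, from $h$, from $q^{\pm n^2/4}$, and from $t=q^n$ versus $q^{n-1}$, to get exactly the stated prefactor. We would settle it by comparing the constant term (all $k_i=0$) of \eqref{AB1} with the $d=0$ term.
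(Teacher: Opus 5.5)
Your proposal is correct and follows essentially the same route as the paper. Lemma~\ref{lem.beta} identifies the $q=1$ state sum with $S(\fb_r)$, and the $k$-independent factors in \eqref{AB1} give the prefactor $t^{(r-1-\text{writhe})/2}$. Fixing the first strand at $e_0$ amounts to restricting to $x_1$-free monomials and then setting $x_1=0$, which yields $S(\fb^\trim_r(\beta))$. Nilpotency comes from $\fb_r(\beta)|_{t=1}$ being the permutation matrix of an $r$-cycle. Your extra details, namely the algebra-homomorphism argument for the trimmed composite and the MacMahon bound for $(t-1)$-adic convergence, fill in steps the paper leaves implicit.
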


The stated nilpotency follows from the fact that
the closure of the braid $\beta \in B_r$ is a knot. 
When $t=1$, $\fb_r$ is the permutation representation and, since $\beta$ is has closure a knot, $\fb_r(\beta)$ is an $r$-cycle.
This implies that for each $x_i$ there is some $k_i$ such that $\fb_r(\beta)^{k_i}(x_i)=x_1$.
Hence, the reduction $\fb^\trim_r$ at $t=1$ is nilpotent. 

The above lemma, together with the MacMahon Master theorem (discussed shortly)
implies the following.
\be
\label{fAB11}
f_{A,B,\nu,\ve}(t,1) \= \frac{t^{\tfrac{r-1-\text{writhe}}{2}}}{\det(I-\fb^\trim_r)} \,.
\ee
Finally, a theorem of Kauffman--Saleur~\cite[Eqn.(5.22)]{KS} identifies the
above determinant with the Alexander polynomial of a knot:
\be
\label{fAB1}
f_{A,B,\nu,\ve}(t,1) \= \frac{1}{\Delta_K(t)}\in\BZ[\![t-1]\!] \,,
\ee
As a consequence, this reproduces Rozansky's proof (in the case of $\fsl_2$)
of the Melvin--Morton--Rozansky Conjecture (theorem of~\cite{BG:MMR})
\be
J_{K,n}(q) \= \frac{1}{\Delta_K(t)} + O(q-1) \,.
\ee

\begin{remark}
\label{rem.burau}
The Burau representation has a trimmed form, discussed above, but also a reduced
form $\fb^{\mathrm{red}}_r$, and the three are related by
\be
\label{3burau}
\fb^\trim_r = (0 | I_{r-1}) \fb_r \begin{pmatrix} 0 \\ I_{r-1} \end{pmatrix},
\qquad
\fb^{\mathrm{red}}_r = (v | I_{r-1}) \fb_r \begin{pmatrix} 0 \\ I_{r-1} \end{pmatrix},
\,\,
v=(1,t^{1/2},\dots,t^{(r-1)/2})^t
\ee
Both the trimmed and the reduced form give a formula for the Alexander polynomial
\be
\label{A1}
\det(I_{r-1}-\fb^\trim_r)  \= t^{\tfrac{r-1-\text{writhe}}{2}} \Delta(t)
\ee
as shown by Kauffman--Saleur~\cite[Eqn.(5.22)]{KS}
and
\be
\label{A2}  
\det(I_{r-1}-\fb^{\mathrm{red}}_r) \= t^{\tfrac{-r+1-\text{writhe}}{2}}
\frac{1-t^r}{1-t}\Delta(t) 
\ee
shown by Kassel--Turaev~\cite[Sec.3.4]{KS}. A further formula for the Alexander
polynomial in terms of the Burau representation is discussed in the appendix, along with another proof Equation~\eqref{fAB1}. 
\end{remark}

\subsection{Rationality via the MacMahon's Master Theorem}
\label{sec:MacMahon}

In this section, we recall MacMahon's Master Theorem~\cite{Macmahon}, which
implies the rationality statement~\eqref{fAB1}. 

\begin{theorem}[MacMahon's Master theorem]
If $C\in M_{n\times n}(\BZ[C_{ij}])$ then
\be
  \frac{1}{\det(\mathrm{I}-C)}
  \=
  \sum_{\ell=0}^\infty
  \mathrm{Tr}(\mathrm{Sym}^\ell C)
  \=
  \frac{1}{(2\pi i)^n}
  \sum_{k\in\BZ_{\geq0}^{n}}
  \oint\cdots\oint
  \prod_{i=1}^{n}(Cz)_i^{k_i}
  \frac{dz_1\cdots dz_{n}}{z_1^{k_1+1}\cdots z_n^{k_n+1}}
  \;\in\;
  \BC[\![C_{ij}]\!]\,.
\ee
\end{theorem}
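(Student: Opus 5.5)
The plan is to prove the two equalities separately, working throughout in the formal power series ring $\BZ[\![C_{ij}]\!]$ in $n^2$ commuting indeterminates. In this ring every element of the ideal generated by the $C_{ij}$ is topologically nilpotent. The sum $\sum_\ell \mathrm{Tr}(\mathrm{Sym}^\ell C)$ converges because $\mathrm{Tr}(\mathrm{Sym}^\ell C)$ is homogeneous of degree $\ell$. The contour integrals are read as the formal coefficient extraction $[z^k]$, so no analysis is needed.

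\emph{First equality.} I would use the splitting principle, i.e.\ Zariski density. Both sides are conjugation invariant, and in fact each degree-$\ell$ component is a polynomial identity in the $C_{ij}$. It therefore suffices to check the identity for diagonalizable $C$ with distinct eigenvalues over an algebraic closure of $\BQ(C_{ij})$, or simply over $\BC$ on a dense set. If $C$ has eigenvalues $\lambda_1,\dots,\lambda_n$, then $\mathrm{Sym}^\ell C$ has eigenvalues $\lambda^m$ for $|m|=\ell$. Summing over $\ell$ gives $\sum_{m}\lambda^m=\prod_i(1-\lambda_i)^{-1}=\det(I-C)^{-1}$. Comparing homogeneous components degree by degree turns this into an identity of polynomials, which then holds generically and hence identically.

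\emph{Second equality.} I would show that, for each $k\in\BZ_{\geq0}^n$, the coefficient of $z^k$ in $\prod_i (Cz)_i^{k_i}$ equals the diagonal matrix entry $\langle x^k,\mathrm{Sym}^{|k|}(C)\,x^k\rangle$ in the monomial basis of $\mathrm{Sym}^{|k|}$. Indeed, $\mathrm{Sym}(C)$ acts on $\BZ[x_1,\dots,x_n]$ as the algebra endomorphism with $x_i\mapsto \sum_j C_{ji}x_j$, so
\[
\mathrm{Sym}(C)(x^k)=\prod_i\Big(\sum_j C_{ji}x_j\Big)^{k_i}.
\]
This is exactly the expression appearing in \eqref{eq.calR+}. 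Up to the transpose convention $(Cz)_i$ versus $(C^tz)_i$, which is harmless because $\det(I-C)=\det(I-C^t)$, taking the coefficient of $x^k$ gives the diagonal entry. Summing the diagonal entries over all $k$ with $|k|=\ell$ yields $\mathrm{Tr}(\mathrm{Sym}^\ell C)$. The residue integral $\frac{1}{(2\pi i)^n}\oint\cdots\oint g(z)\,z^{-k-1}\,dz$ is precisely $[z^k]g$ for a polynomial $g$, which completes this step.

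The only step with real content is the first one, and I expect the main obstacle there to be justifying the reduction to the diagonalizable case with integer coefficients. I would first try the degree-by-degree polynomial identity argument described above. If that proves awkward, the fallback is to take the logarithm: show that $-\log\det(I-C)=\sum_{m\geq1}\mathrm{Tr}(C^m)/m$ and match this with the generating function identity relating power sums to complete homogeneous symmetric functions, $\sum_\ell h_\ell=\exp\big(\sum_m p_m/m\big)$. Both arguments are standard.
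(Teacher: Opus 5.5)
Your proof is correct, but it takes a different route from the paper's.

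\emph{The paper's route.} It argues by uniqueness for a system of differential equations. It asserts that the operators $\delta_{jm}-\sum_{\ell}(\mathrm{I}-C)_{\ell j}\partial_{C_{\ell m}}$ annihilate both sides, i.e.\ both satisfy $\Delta f=(\mathrm{I}-C)^{-t}f$ with $\Delta_{\ell m}=\partial_{C_{\ell m}}$, and that both sides are $1+O(C_{ij})$. This system forces every $\partial_{C_{\ell m}}$ to kill $\det(\mathrm{I}-C)f$. So $\det(\mathrm{I}-C)f$ is constant, and the normalized solution is $\det(\mathrm{I}-C)^{-1}$.

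\emph{Your route.} You verify each equality directly. For the first, you reduce degree by degree to the eigenvalue identity $\sum_\ell h_\ell(\lambda)=\prod_i(1-\lambda_i)^{-1}$ for the generic matrix. That matrix already has distinct eigenvalues over $\overline{\BQ(C_{ij})}$, so no density argument is needed. For the second, you read the coefficient of $z^k$ in $\prod_i(Cz)_i^{k_i}$ as a diagonal entry of $\mathrm{Sym}^{|k|}(C^t)$ in the monomial basis.

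\emph{What each buys.} Your argument is complete and elementary. The paper leaves unchecked exactly the nontrivial step, namely that the integral side satisfies the system. Checking it amounts to differentiating the summed integrand $\prod_i((\mathrm{I}-C)z)_i^{-1}$ under the integral and integrating by parts in $z_j$. Your second step is also literally the symmetric-power mechanism of \eqref{eq.calR+} and Lemma~\ref{lem.beta}, which is how the paper uses the theorem to obtain \eqref{fAB11}. The paper's route, in exchange, characterizes $\det(\mathrm{I}-C)^{-1}$ as the unique normalized solution of a first-order system. It treats all three expressions uniformly, with no eigenvalues, field extensions or symmetric-function identities, in the same differential-operator spirit as the proof of Theorem~\ref{thm.zrat}.
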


\begin{proof}
The operators $\delta_{jm}-\sum_{\ell=1}^n(\mathrm{I}-C)_{\ell j}\partial_{C_{\ell m}}$
annihilate the LHS and the RHS in the theorem and both sides satisfy the initial
conditions $1+\mathrm{O}(C_{ij})$. These operators have a unique power series
solution with this initial condition since letting
$\Delta_{\ell m}=\partial_{C_{\ell m}}$, the equations have the form
\be
\Delta f \= (\mathrm{I}-C)^{-t}f
\ee
and $\Delta\det(\mathrm{I}-C)f=0$, which implies that $\det(\mathrm{I}-C)f$ is
constant.
\end{proof}

The proof of Lemma~\ref{lem.beta} reveals that the $q\to1$ limit 
$f_{A,B,\nu,\ve}(t,1)$, which is assembled out of Burau matrices at each crossing
of the braid, admits a deformation using matrices $\calR^\pm$ with arbitrary
parameters at each crossing. This is best explained by our running
example~\ref{ex.944a} of the knot $9_{44}$. 

\begin{example}
\label{ex.944b}  
We will see this continuing the example~\eqref{ex.944a}. 
%
%
The endomorphism of $U_4$ is a $4 \times 4$ matrix
given by the product coming from the braid word~\eqref{b944} as follows:
\be
\begin{tiny}
\begin{aligned}
&\begin{pmatrix}
  0 & z^{-1} & 0 & 0\\
  z^{-1} & y_1 & 0 & 0\\
  0 & 0 & 1 & 0\\
  0 & 0 & 0 & 1
\end{pmatrix}
\begin{pmatrix}
  0 & z^{-1} & 0 & 0\\
  z^{-1} & y_2 & 0 & 0\\
  0 & 0 & 1 & 0\\
  0 & 0 & 0 & 1
\end{pmatrix}
\begin{pmatrix}
  0 & z^{-1} & 0 & 0\\
  z^{-1} & y_3 & 0 & 0\\
  0 & 0 & 1 & 0\\
  0 & 0 & 0 & 1
\end{pmatrix}
\begin{pmatrix}
  1 & 0 & 0 & 0\\
  0 & 0 & z^{-1} & 0\\
  0 & z^{-1} & y_4 & 0\\
  0 & 0 & 0 & 1
\end{pmatrix}
\\
&\qquad\times
\begin{pmatrix}
  1 & 0 & 0 & 0\\
  0 & 1 & 0 & 0\\
  0 & 0 & y_5 & z\\
  0 & 0 & z & 0
\end{pmatrix}
\begin{pmatrix}
  y_7 & z & 0 & 0\\
  z & 0 & 0 & 0\\
  0 & 0 & 1 & 0\\
  0 & 0 & 0 & 1
\end{pmatrix}
\begin{pmatrix}
  y_8 & z & 0 & 0\\
  z & 0 & 0 & 0\\
  0 & 0 & 1 & 0\\
  0 & 0 & 0 & 1
\end{pmatrix}
\begin{pmatrix}
  1 & 0 & 0 & 0\\
  0 & 0 & z^{-1} & 0\\
  0 & z^{-1} & y_9 & 0\\
  0 & 0 & 0 & 1
\end{pmatrix}
\begin{pmatrix}
  1 & 0 & 0 & 0\\
  0 & 1 & 0 & 0\\
  0 & 0 & y_6 & z\\
  0 & 0 & z & 0
\end{pmatrix}\\
&=\!\left(\!\!\begin{array}{c|c|c|c}
z^{-2}y_2y_7y_8 + y_2&
(z^{-1}y_1y_2 + z^{-3})y_7y_8 + (zy_1y_2 + z^{-1})&
y_8&
0\\
\hline
(z^{-3}y_2y_3 + z^{-5})y_5&
((z^{-2}y_1y_2 + z^{-4})y_3 + z^{-4}y_1)y_5&
z^{-1}y_4y_5&
1\\
\hline
\begin{array}{c}
\!\!\!\!((z^{-2}y_2y_3 + z^{-4})y_5y_9 + z^{-2}y_2y_7)y_6\!\!\!\!\\
 + (y_2y_3 + z^{-2})
\end{array}&
\begin{array}{c}
(((z^{-1}y_1y_2 + z^{-3})y_3 + z^{-3}y_1)y_5y_9\\
\!\!\!\!\!\!+(z^{-1}y_1y_2 + z^{-3})y_7)y_6
+ ((zy_1y_2 + z^{-1})y_3 + z^{-1}y_1)\!\!\!\!\!\!
\end{array}&
\begin{array}{c}
\!\!\!\!(y_4y_5y_9 + 1)y_6\!\!\!\!\\
+ z^2y_4
\end{array}&
zy_9y_6\\
\hline
(z^{-1}y_2y_3 + z^{-3})y_5y_9 + z^{-1}y_2y_7&
((y_1y_2 + z^{-2})y_3 + z^{-2}y_1)y_5y_9 + (y_1y_2 + z^{-2})y_7&
zy_4y_5y_9 + z&
z^2y_9
\end{array}\!\!\right)^{\!\!\!t}\!\!.
\end{aligned}
\end{tiny}
\ee
Therefore, the reduced operator
is given by the bottom $3\times 3$ matrix. 
Specializing $z=1$, we find that:
\be
\label{eq.944.q=1}
\begin{small}
\begin{aligned}
  &f_A(y) :=\sum_{k\in\BZ^{9}_{\geq0}}
  \binom{k_1-k_2+k_3+k_7-k_8}{k_1}
  \binom{k_1}{k_2}
  \binom{k_3+k_7-k_8}{k_3}
  \binom{k_1-k_2+k_3+k_4-k_5+k_6}{k_4}\\
  &\qquad\times\binom{k_1-k_2+k_3+k_4}{k_5}
  \binom{k_1-k_2+k_3-k_5+k_6+k_7-k_8+k_9}{k_6}
  \binom{k_7-k_8}{k_7}\\
  &\qquad\times\binom{k_1-k_2+k_3-k_5+k_6+k_7}{k_8}
  \binom{k_1-k_2+k_3+k_7-k_8+k_9}{k_9}y^k\\
  &\=
  \det\!\!
  \left(\!\!\begin{array}{c|c|c}
1-((y_1y_2 + 1)y_3 + y_1)y_5&
-y_4y_5&
-1\\
\hline
\begin{array}{c}
\!\!-(((y_1y_2 + 1)y_3 + y_1)y_5y_9 + (y_1y_2 + 1)y_7)y_6\!\!\\
- ((y_1y_2 + 1)y_3 + y_1)
\end{array}&
1-(y_4y_5y_9 + 1)y_6 - y_4&
-y_9y_6\\
\hline
-((y_1y_2 + 1)y_3 + y_1)y_5y_9 - (y_1y_2 + 1)y_7&
-y_4y_5y_9 - 1&
1-y_9 
\end{array}\!\!\right)^{\!\!\!\!-1}\\
&\= (1 - y_1 - y_3 - y_1 y_2 y_3 - y_4 - y_1 y_5 - y_3 y_5 - y_1 y_2 y_3 y_5 - y_6 + 
y_1 y_5 y_6 + y_3 y_5 y_6 + y_1 y_2 y_3 y_5 y_6 \\ & \hspace{0.6cm}
- y_7 - y_1 y_2 y_7 + y_4 y_7 + 
 y_1 y_2 y_4 y_7 - y_4 y_5 y_6 y_7 - y_1 y_2 y_4 y_5 y_6 y_7 - y_9 + y_4 y_9 - 
 y_4 y_5 y_6 y_9)^{-1}
\end{aligned}
\end{small}
\ee
On the other hand specializing $y_i=1-t$ for $i=1,2,3,4,9$, $y_i=1-t^{-1}$
for $i=5,6,7,8$, and $z=t^{-1/2}$, 
we find that $f_{A,B,\nu,\ve}(t,1) \in \BZ[\![t-1]\!]$ equals to
\be
\begin{aligned}
  &\sum_{k\in\BZ^{9}_{\geq0}}
  \binom{k_1-k_2+k_3+k_7-k_8}{k_1}
  \binom{k_1}{k_2}
  \binom{k_3+k_7-k_8}{k_3}
  \binom{k_1-k_2+k_3+k_4-k_5+k_6}{k_4}\\
  &\qquad\times\binom{k_1-k_2+k_3+k_4}{k_5}
  \binom{k_1-k_2+k_3-k_5+k_6+k_7-k_8+k_9}{k_6}
  \binom{k_7-k_8}{k_7}\\
  &\qquad\times\binom{k_1-k_2+k_3-k_5+k_6+k_7}{k_8}
  \binom{k_1-k_2+k_3+k_7-k_8+k_9}{k_9}\\
  &t^{-k_2-k_4+2k_5+k_7-k_9}(1-t)^{k_1+k_2+k_3+k_4+k_9}(1-t^{-1})^{k_5+k_6+k_7+k_8}\\
  &\=\frac{1}{t^4-4t^3+7t^2-4t+1}\\
  &\=
  1 - 2(t-1) + 3(t-1)^2 - 4(t-1)^3 + 4(t-1)^4 - 2(t-1)^5 - 3(t-1)^6\\
  &\qquad + 12(t-1)^7 - 25(t-1)^8 + 40(t-1)^9+\mathrm{O}(t-1)^{10}
  \,.
\end{aligned}
\ee
This agrees with the inverse of the Alexander polynomial of $9_{44}$.
\end{example}

For a general braid $\beta \in B_r$ with closure a knot, the deformation of
$f_{A,B,\nu,\ve}(t,1)$ that we illustrated above, depends only on the matrix $A$ and
with the notation of Equation~\eqref{ABt}, it is given by
\be
\label{fAy}
f_{A}(y)
\=
\sum_{k\in\BZ_{\geq0}^N}
\prod_{i=1}^N \binom{(A^{t}k)_i}{k_i} y_i^{k_i} \,.
\ee

The relation between $f_{A,B,\nu,\ve}(t,1)$ and $f_A(y)$ can be described by
analysing certain cycles on the knot.
To each crossing, we can associate a cycle that follows along the knot starting
from the first time it enters the crossing and ending when it encounters the
crossing for the second time.
Let $\gamma_i$ be the cycle associated to the $i$-th crossing.
(The cycle $\gamma_4$ associated to the crossing labeled by $k_4$ is pictured in
Figure~\ref{fig:9_44}.)
Let $s(i)=\pm 1$ be positive if $i$ first appears as an under-crossing (this is
the sign of $k_i$ in the binomial sums).
Finally, counting the crossing that $\gamma$ passes with multiplicity, let
$d_i=\text{\#(negative crossings in the cycle
  $\gamma_i$)}-\text{\#(positive crossings in the cycle
  $\gamma_i$)}-\ve_i(1+s(i))$.

\begin{lemma}
\label{lem.yt}
We have the following:
\be
\label{yt}
f_{A,B,\nu,\ve}(t,1)
\=
f_A(y)|_{
y_i\to(t^{\ve_i}-1)t^{s(i)d_i/2}}\,.
\ee
\end{lemma}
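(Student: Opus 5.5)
Both sides are power series in the same set of summation variables $k\in\BZ_{\geq0}^N$, and both carry the same product of ordinary binomials $\prod_i\binom{(A^tk)_i}{k_i}$. Indeed, by~\eqref{AB1} the $q=1$ specialization of the summand of~\eqref{ABt} keeps exactly these binomial coefficients, while the factor $(q^{-(B^tk)_i}t;q^{-1})_{k_i}$ becomes $(1-t)^{k_i}$. So the plan is to compare the remaining scalar weights term by term. I will show that for every $k$ in the support (i.e.\ every $k$ for which all binomials are nonzero) the monomial weight in~\eqref{AB1} equals $\prod_i\big((t^{\ve_i}-1)t^{s(i)d_i/2}\big)^{k_i}$. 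Since this comparison is termwise, it gives an identity of formal series in $\BZ[\![t-1]\!]$; convergence is guaranteed by the nilpotency statement of Lemma~\ref{lem.beta2}. I will first deal with the prefactor. The global power $t^{(r-1-\text{writhe})/2}$ combines with the $k$-independent part $t^{-\ve_i/2}$ of each crossing factor. Because the state sum is normalized to $1$ at the unknot, the constant ($k=0$) terms on both sides are $1$, so the $k$-independent part of the prefactor must equal $1$. I would verify this directly, or absorb it into the normalization, as the paper's identity $f(t,1)=1/\Delta$ already presupposes.

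The heart of the argument is the exponent of $t$. The factor $(-\ve_i t^{-1/2})^{k_i}(1-t)^{k_i}$ equals $(t^{\ve_i}-1)^{k_i}t^{a_ik_i}$ with $a_i=-1/2$ if $\ve_i=1$ and $a_i=+1/2$ if $\ve_i=-1$; this gives $-(1+\ve_i)/2+1/2$ after rewriting $-(1-t)=(t-1)$ for positive crossings and $(1-t)=t(t^{-1}-1)$ for negative ones. The remaining exponent is the linear form $-\sum_i\frac{\ve_i}{2}\big((A^tk)_i+(B^tk)_i\big)$. I will reorganize it by collecting the coefficient of each $k_j$: this coefficient is $-\frac12\sum_i\ve_i(A_{ji}+B_{ji})$. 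Next I will use the combinatorial meaning of the columns of $A$ and $B$. By the construction in Section~\ref{sub.state}, $(A^tk)_i$ is the label of an incoming strand at crossing $i$, and $(B^tk)_i$ is the label of the other incoming strand, possibly shifted. Label propagation along the closed-up long knot (Figure~\ref{fig:9_44}) shows that $k_j$ enters a strand label precisely on the arcs traversed by the cycle $\gamma_j$, with sign $s(j)$. Hence $k_j$ appears in $(A^tk)_i+(B^tk)_i$ with coefficient $s(j)$ times the number of times $\gamma_j$ passes through crossing $i$. The endpoint crossing $j$ itself needs separate treatment, because there the label changes by $\pm k_j$ on one strand only.

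Summing over $i$ with weights $-\ve_i/2$ produces $\frac{s(j)}{2}(\#\text{neg}-\#\text{pos})$ over $\gamma_j$, corrected by the boundary contribution at crossing $j$. Combining that correction with $a_j$ should yield exactly the term $-\ve_j(1+s(j))$ in $d_j$. The total is then $t^{s(j)d_j/2}$ per unit of $k_j$, which proves~\eqref{yt}.

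I expect the main obstacle to be this last bookkeeping step: checking, for each of the four cases $\ve_j=\pm1$, $s(j)=\pm1$, that the contributions at the start and end crossing of $\gamma_j$ combine into $-\ve_j(1+s(j))$. The difficulty is that the $R^+$ and $R^-$ formulas shift the labels asymmetrically ($e_{j+k}\otimes e_{i-k}$ versus $e_{j-k}\otimes e_{i+k}$). I would first settle these four local cases on a single crossing, using the explicit $R^\pm_{ij,k}|_{q=1}$ in~\eqref{eq.R=1}. I would then sanity-check the result against the $9_{44}$ example, where the substituted exponents $t^{-k_2-k_4+2k_5+k_7-k_9}$ are listed explicitly.
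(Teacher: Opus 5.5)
Your termwise comparison is the right strategy; it is what the paper means by ``immediate from \eqref{eq.R=1}''. The exponent bookkeeping you defer also closes up. Each passage of $\gamma_j$ through a crossing $i\neq j$ puts $s(j)k_j$ into exactly one incoming label at $i$, so it contributes $-s(j)\ve_i/2$. At crossing $j$ only the incoming label of the second visit carries $s(j)k_j$, giving $-\ve_j s(j)/2$. Together with your $a_j=-\ve_j/2$ this is $-\ve_j(1+s(j))/2=\tfrac{s(j)}{2}\bigl(-\ve_j(1+s(j))\bigr)$. The total matches $s(j)d_j/2$ provided the crossing count in $d_j$ excludes $j$ itself. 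That is the convention of the $9_{44}$ example: $\gamma_4$ passes only the positive crossings $k_5,k_6$, and $d_4=0$.

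The step that fails is the prefactor. The $k=0$ term of \eqref{AB1} is $t^{(r-1-\mathrm{writhe})/2}$, while the $k=0$ term of $f_A(y)$ is $1$. (The factors $t^{-\ve_i/2}$ are already inside that power, so there is nothing left for it to combine with.) Normalization at the unknot means $J_{\mathrm{unknot},n}=1$. It says nothing about the constant term of the state sum of an arbitrary braid; the power of $t$ is cancelled only globally, against the same power in the Kauffman--Saleur formula \eqref{A1}.

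In fact the displayed identity is false whenever $r-1\neq\mathrm{writhe}$. For $4_1$ ($r=3$, writhe $0$), Example~\ref{ex.41} gives $f_A(y)|_{y\to\cdots}=t^{-1}/\Delta_{4_1}(t)$, whereas $f_{A,B,\nu,\ve}(t,1)=1/\Delta_{4_1}(t)$ by \eqref{fAB1}. For $9_{44}$ ($r=4$, writhe $-1$), the substituted sum in Example~\ref{ex.944b} is $1/(t^4-4t^3+7t^2-4t+1)=t^{-2}/\Delta_{9_{44}}(t)$. So no argument can make the prefactor equal $1$. What your comparison actually proves, and what you should state, is
\[
f_{A,B,\nu,\ve}(t,1)=t^{\frac{r-1-\mathrm{writhe}}{2}}\,f_A(y)\big|_{y_i\to(t^{\ve_i}-1)t^{s(i)d_i/2}}\,.
\]
Since $r-1\equiv\mathrm{writhe}\pmod 2$ for a braid whose closure is a knot, the factor is an integral power of $t$, hence a unit of $\BZ[t^{\pm1}]$. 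The discrepancy is therefore harmless downstream: in Lemma~\ref{lem:def2A} it only means that the specialization of $\d$ equals $\Delta_K(t)$ up to a power of $t$. But it has to be carried through, not argued away.
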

The proof is immediate from Equations~\eqref{eq.R=1}.
\begin{example}
  
For $9_{44}$, we have that $N=9$ and 
\be
\begin{aligned}
  s&\=(\phantom{-}1,-1,\phantom{-}1,\phantom{-}1,-1,\phantom{-}1,
  \phantom{-}1,-1,\phantom{-}1)\,,\\
  d&
  \=(\phantom{-}2,\phantom{-}0,\phantom{-}2,\phantom{-}0,-2,-2,
  \phantom{-}0,\phantom{-}2,\phantom{-}0)\,.
\end{aligned}
\ee
\end{example}

\subsection{Further rationality via Lagrange inversion}
\label{sub.lag}

In computing the expansion of the state-sums $f_{A,B,\nu,\ve}(t,q)$ at roots of unity,
we need rationality of further series, namely those obtained by replacing
$\binom{(A^tk)_i}{k_i}$ in~\eqref{fAy} by $\binom{s_i+(A^tk)_i}{k_i}$ for integers
$s_i$.

It turns out that these new sums are determined by the solutions of the Nahm
equation associated to the $A$-matrix, which comes from an application
of Lagrange inversion used by Rodriguez-Villegas~\cite{RV:Apoly}.

\begin{theorem}
\label{thm:lag.inv}
The system of equations
\be\label{eq:nahm}
  1-z_i\=-y_i\prod_jz_j^{A_{ij}}\,\qquad i=1,\dots,N\,,
\ee
has a unique solution in $\BQ[\![y_1,\dots,y_N]\!]$
such that
\be
  \prod_i z_i^{s_i}
  \=
  \delta(y)
  \sum_{k\in\BZ_{\geq0}^N}
  \prod_{i=1}^N
  \binom{s_i+ (A^t k)_i}{k_i}y_i^{k_i}\,.
\ee
where
\be
\label{deltay}
  \delta(y)
  \=
  \det(\mathrm{I}+\mathrm{diag}(1-z_i)\,A\,\diag(z_i^{-1}))\,.
\ee
\end{theorem}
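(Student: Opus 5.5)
Existence and uniqueness of the solution come first. Write the system \eqref{eq:nahm} as $z_i = 1 + y_i\prod_j z_j^{A_{ij}}$. Any solution in $\BQ[\![y]\!]$ must satisfy $z_i \equiv 1 \pmod{(y)}$, since $z_i^{A_{ij}}$ only makes sense for a unit. Set $z_i = 1+w_i$ with $w_i\in(y)$. The equation then becomes $w_i = y_i\prod_j(1+w_j)^{A_{ij}}$, which is a fixed-point equation of the form $w = y\cdot \Phi(w)$ with $\Phi(0)=1$. The right-hand side is a contraction for the $(y)$-adic topology: a change of $w$ in degree $\ge d$ produces a change in degree $\ge d+1$. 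Induction on total degree therefore gives a unique solution, and it has integer coefficients.

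For the generating-function identity I would use multivariable Lagrange inversion in the Good form, as in Rodriguez-Villegas~\cite{RV:Apoly}. Put $\phi_i(w)=\prod_j(1+w_j)^{A_{ij}}$, so that $w_i=y_i\phi_i(w)$. Good's theorem states that for any $F\in\BQ[\![w]\!]$,
\be
\frac{F(w(y))}{\det\big(\delta_{ij}-\tfrac{w_j}{\phi_i}\partial_{w_j}\phi_i\big)(w(y))}
\=\sum_{k}y^k\,[w^k]\,F(w)\prod_i\phi_i(w)^{k_i}.
\ee
Choose $F=\prod_i(1+w_i)^{s_i}$. Then $F\prod_i\phi_i^{k_i}=\prod_j(1+w_j)^{s_j+\sum_i A_{ij}k_i}=\prod_j(1+w_j)^{s_j+(A^tk)_j}$, so its coefficient of $w^k$ is $\prod_j\binom{s_j+(A^tk)_j}{k_j}$. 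These binomials are the generalized ones, since the exponent may be negative; this needs to be reconciled with the paper's convention that $\binom{a}{b}=0$ when $a<0$, so I would state the identity with the appropriate convention or restrict to the range where the exponents are nonnegative.

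Next I compute the Jacobian factor. We have $w_j\partial_{w_j}\log\phi_i = A_{ij}\,\frac{w_j}{1+w_j}=A_{ij}(z_j-1)z_j^{-1}$. The determinant is therefore $\det(\mathrm{I}-A\,\diag((z_j-1)/z_j))$. This must be matched with $\delta(y)=\det(\mathrm{I}+\diag(1-z_i)A\diag(z_i^{-1}))$. Because $\det(\mathrm{I}+XY)=\det(\mathrm{I}+YX)$ and diagonal matrices commute, both equal $\det(\mathrm{I}+A\,\diag((1-z_j)z_j^{-1}))$. This yields $\prod z_i^{s_i}=\delta(y)\sum_k\prod_i\binom{s_i+(A^tk)_i}{k_i}y^k$.

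The main obstacle is justifying Good's Lagrange inversion formula, or a version of it, in exactly this normalization. As a fallback I would prove it directly with the residue method used above for MacMahon's theorem. The coefficient of $y^k$ on the right equals a multiple residue $\oint F(w)\prod_i\phi_i^{k_i}\,w^{-k-1}dw$. Summing the geometric series in $k$ gives $\oint F(w)\prod_i (w_i-y_i\phi_i(w))^{-1}dw$. The change of variables $u_i=w_i/\phi_i(w)$ near the origin then evaluates this residue at $u=y$, and the Jacobian $\det(\partial u/\partial w)=\det(\delta_{ij}-\tfrac{w_j}{\phi_i}\partial_j\phi_i)/\prod\phi_i$ produces the determinant factor. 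The formal-residue change-of-variables rule (Jacobi's residue theorem) is the step that needs care. Everything else is bookkeeping, and as a sanity check the case $s=0$ recovers the MacMahon-type determinant formula for $f_A(y)$.
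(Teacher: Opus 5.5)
Your argument is correct and takes the paper's route: the paper gives no separate proof and simply invokes multivariable Lagrange inversion as in Rodriguez-Villegas, which is your Good-formula computation with $F=\prod_i z_i^{s_i}$ (using $\delta_{ij}-\tfrac{w_i}{\phi_i}\partial_{w_j}\phi_i$ instead, the Jacobian matrix even coincides entry-by-entry with the one defining $\delta(y)$). On the convention point, the identity holds exactly with the generalized binomials $\binom{a}{k}=a(a-1)\cdots(a-k+1)/k!$ and fails under the paper's vanishing convention for general $A$. For instance, with $N=1$, $A=(-1)$, $s=0$ the truncated sum is $1$ but $\delta=(2z-1)/z\neq 1$, so drop the alternative of restricting the range of summation.
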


Note that when $s_i=0$ the binomial sum coincides with $f_A(y)$ from
Equation~\eqref{fAy}.

The next example gives an unexpected source of matrices $A$ with integer entries
whose Nahm equations are rational (as opposed to algebraic) functions. Its proof
uses an argument suggested to us by Peter Scholze.

\begin{theorem}
\label{thm.zrat}
If $\delta(y)\in\BQ(y)$, then $z_i\in\BQ(y)$ for $i=1,\dots, N$, and conversely.
\end{theorem}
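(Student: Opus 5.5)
The converse direction is immediate. If all $z_i\in\BQ(y)$, then, since $z_i=1+O(y)$, the $z_i^{-1}$ are also rational, and so is
\[
\delta(y)=\det(\mathrm{I}+\diag(1-z_i)\,A\,\diag(z_i^{-1})),
\]
being a polynomial in the $z_i^{\pm1}$ with integer coefficients.

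For the forward direction, I will first set up integrality and algebraicity.

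\emph{Integrality.} Taking $s=0$ in Theorem~\ref{thm:lag.inv} gives $\delta(y)^{-1}=f_A(y)\in 1+y\BZ[\![y]\!]$, so $\delta\in\BZ[\![y]\!]$. Taking $s=\pm e_i$ gives
\[
z_i^{\pm1}=\delta(y)\sum_k\binom{\pm 1+(A^tk)_i}{k_i}\prod_j\binom{(A^tk)_j}{k_j}y^k .
\]
With the convention that these binomials are integers (or zero), this shows $z_i^{\pm1}\in\BZ[\![y]\!]$.

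\emph{Algebraicity.} The Nahm system~\eqref{eq:nahm} has Jacobian in $z$ equal to the identity at $y=0,z=1$. Its power series solution is therefore an isolated point of the zero-dimensional scheme that~\eqref{eq:nahm} cuts out over $\BQ(y)$. Hence $L=\BQ(y)(z_1,\dots,z_N)$ is a finite extension of $\BQ(y)$.

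The aim is then to show $[L:\BQ(y)]=1$. My first attempt is a Galois argument. Let $\sigma$ be an embedding of $L$ into an algebraic closure of $\BQ(y)$, realised inside a field of iterated Puiseux series. Then $\sigma(z)$ is again a solution of~\eqref{eq:nahm}. Because $\delta$ is rational, $\sigma(\delta)=\delta$, and $\sigma(\delta)$ is the same determinant expression evaluated at $\sigma(z)$. I would then use the ``all $s$'' content of Theorem~\ref{thm:lag.inv}: for every $s\in\BZ^N$ the product $z^s$ equals $\delta$ times a series $F_s(y)$ whose coefficients are given by a universal binomial formula. I would try to show that the Puiseux expansion of $\sigma(z)^s/\delta$ satisfies the same coefficient recursion. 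That recursion is the one coming from the operator proof of MacMahon/Lagrange, i.e.\ the first-order system of differential equations in $y$ that characterises $\delta\cdot F_s$. Once $\delta$ is fixed, I would conclude that $\sigma(z)^s$ has the same expansion as $z^s$, forcing $\sigma(z)=z$.

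The main obstacle is exactly this last step. I need to show that rationality of $\delta$ removes the ambiguity among branches. This probably requires expressing $\delta$ as a norm-type or Jacobian quantity attached to the whole étale cover $\{$solutions of~\eqref{eq:nahm}$\}\to\mathrm{Spec}\,\BQ(y)$. For example, $\delta(y)^{-1}$ would be the local contribution of the distinguished branch to a trace. Rationality of that contribution would then split the branch off as a degree-one component.

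If that approach stalls, I would instead use the integrality established above together with $p$-adic arguments. For almost all primes $p$, Lucas' theorem applied to the binomial formulas gives Dwork-type congruences relating $F_s(y)$ to $F_{s'}(y^p)$ modulo $p$. With $\delta$ rational, these should show that each $z_i$ is a $p$-adically analytic, Frobenius-compatible algebraic function for all such $p$. I would then invoke a rationality criterion of Pólya--Carlson / Borel--Dwork type for multivariable algebraic series with integer coefficients to conclude $z_i\in\BQ(y)$.
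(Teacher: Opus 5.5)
\textbf{There is a genuine gap: the forward direction is not proved.} Your converse, integrality and algebraicity steps are fine. But the claim $[L:\BQ(y)]=1$ is the whole content of the theorem, and you leave it as ``the main obstacle''.

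Your first attempt points in the right direction: Galois conjugates plus a differential equation in which $\delta$ is frozen. As phrased, though, it cannot work. The contiguity relations coming from Theorem~\ref{thm:lag.inv} are algebraic--differential identities in $(y,z)$. So every conjugate $\sigma(z)$ satisfies them automatically, with $\sigma(\delta)$ in place of $\delta$, because the derivations $\theta_j=y_j\partial_{y_j}$ extend uniquely to $L$ and commute with $\sigma$. Such a recursion only pins down a solution once initial data are fixed. The conjugate branches do not pass through $z=1$ at $y=0$, since the implicit function theorem makes that branch unique. Hence ``same recursion, hence same expansion'' is unjustified.

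The missing idea is to isolate one relation whose inhomogeneous term involves \emph{only} $\delta$. The paper takes $s=-e_i$ in Theorem~\ref{thm:lag.inv} and uses $a\binom{a-1}{b}=(a-b)\binom{a}{b}$ to get
\[
\Big(\sum_j A_{ji}\theta_j\Big)\frac{z_i^{-1}}{\delta}=\Big(\sum_j A_{ji}\theta_j-\theta_i\Big)\frac{1}{\delta}.
\]
\begin{itemize}
\item If $\delta\in\BQ(y)$, the right-hand side is rational, hence $\sigma$-invariant.
\item So every $\sigma(z_i^{-1})/\delta$ solves the same first-order equation, and so does the Galois average $Z_i^{-1}\in\BQ(y)$.
\item Thus $(Z_i^{-1}-z_i^{-1})/\delta$ is annihilated by $\sum_jA_{ji}\theta_j$.
\item The paper removes this remaining constant by passing to monomial coordinates $\eta$ in which the operator is $\eta_1\partial_{\eta_1}$, and using the boundary value at $y_i=0$, where $z_i=1$. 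The case $A_{ii}=0$ is handled via $y_iz_i$ instead of $z_i^{-1}$.
\end{itemize}
Your norm/trace heuristic for $\delta^{-1}$ is neither needed nor substantiated.

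\textbf{The $p$-adic fallback has the same defect in sharper form.} Integrality, algebraicity and Lucas-type congruences hold equally for non-rational solutions of the very same system. For $N=1$, $A=(2)$ one gets $z=1+yz^2$, the Catalan series, with $\delta=2z^{-1}-1=\sqrt{1-4y}$ and $\delta^{-1}=\sum_k\binom{2k}{k}y^k$. So rationality of $\delta$ must enter essentially, and your sketch never says where. Nor do you supply the analytic input that a P\'olya--Carlson or Borel--Dwork criterion needs, namely radii of meromorphy at all places. Branch points of a non-rational algebraic $z$ typically obstruct exactly that input.
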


\begin{proof}
Notice that
\be
\begin{aligned}
  z_i^{-1}
  &\=
  \delta(y)
  \sum_{k\in\BZ_{\geq0}^N}
  \prod_{\ell=1}^N
  \binom{-\delta_{i\ell}+ (A^{t} k)_\ell}{k_\ell}y_\ell^{k_\ell}\,,\\
  &\=
  \delta(y)
  \sum_{k\in\BZ_{\geq0}^N}
  \frac{-k_i+(A^{t} k)_i}{(Ak)_\ell}
  \prod_{\ell=1}^N
  \binom{\sum_{j=1}^N (A^{t} k)_\ell}{k_\ell}y_\ell^{k_\ell}\,.
\end{aligned}
\ee
We see that for $\th_i=y_i\partial_{y_i}$, we have
\be
  \Big(\sum_{j=1}^NA_{ji}\th_j\Big)\frac{z_i^{-1}}{\delta(y)}
  \=
  \Big(-\th_i+\sum_{j=1}^NA_{ji}\th_j\Big)\frac{1}{\delta(y)}\in\BQ(y)\,.
\ee
Moreover, we see that $z_i^{-1}|_{y_i=0}\in\BQ(y)$.

Consider this equation in the coordinates $\log(y_j)$.
If $A_{ii}\neq0$, then we can complete $\log(\eta_1)=\log(y_i)$ to a set of
coordinates $\log(\eta_j)$, where $\eta_j$ are monomials in $y_j^{\pm1/D}$ for
some $D$ (and $y_j$ are monomials in $\eta_j^{\pm1}$) such that
$\sum_{j=1}^NA_{ji}\th_j=\sum_{j=1}^NA_{ji}\partial_{\log(y_j)}
=\partial_{\log(\eta_1)}$ and $y_i$ only appears in $\eta_j$ with a positive sign.
Therefore, we see that $\eta_1\partial_{\eta_1}\delta^{-1}z_i\in\BQ(\eta)$ for
some $d$.

Notice that the $z_i$ solve a system of algebraic equations over $\BQ(\eta)$ and
hence there is a finite order Galois group $G$ of automorphisms acting on $z_i$.
We see that
\be
  Z_i^{-1}
  \=
  \frac{1}{|G|}\sum_{\sigma\in G}\sigma\cdot z_i^{-1}\in\BQ(\eta)\,.
\ee
This all implies that
\be
  \partial_{\eta_1}\frac{Z_i^{-1}-z_i^{-1}}{\delta}
  \=0\=
  \frac{Z_i^{-1}-z_i^{-1}}{\delta}\Big|_{\eta_1=0}\,,
\ee
which in turn shows that $Z_i^{-1}=z_i^{-1}\in\BQ(\eta)$.
Moreover, since $z_i^{-1}\in\BQ[\![y]\!]$, we conclude that $z_i^{-1}\in\BQ(y)$.

Finally, if $A_{ii}=0$, then apply the same argument for $y_iz_i$ as opposed
to $z_i^{-1}$, which will work when $A_{ii}\neq1$.

The converse is obvious from Equation~\eqref{deltay}.
\end{proof}

We illustrate the previous theorem with our running example~\ref{ex.944a}.

\begin{example}
\label{ex.944c}
Continuing Example~\ref{ex.944a}, the Nahm equations~\eqref{eq:nahm} for the
matrix $A$ of~\eqref{AB944} have the rational solution
\be
\fontsize{4}{12}\selectfont
\begin{aligned}
&z_1 =\\
& (-1 - y_1 y_2 - y_4 - y_1 y_2 y_4 - y_6 - y_1 y_2 y_6 - y_1 y_8 - y_3 y_8 - 
y_1 y_2 y_3 y_8 - y_7 y_8 - y_1 y_2 y_7 y_8 - y_4 y_7 y_8 - y_1 y_2 y_4 y_7 y_8\\
& - 
y_9 - y_1 y_2 y_9 - y_4 y_9 - y_1 y_2 y_4 y_9 + y_4 y_5 y_6 y_9 + 
y_1 y_2 y_4 y_5 y_6 y_9 - y_1 y_8 y_9 - y_3 y_8 y_9 - y_1 y_2 y_3 y_8 y_9 + 
y_1 y_5 y_6 y_8 y_9\\
& + y_3 y_5 y_6 y_8 y_9 + y_1 y_2 y_3 y_5 y_6 y_8 y_9 - 
y_7 y_8 y_9 - y_1 y_2 y_7 y_8 y_9 - y_4 y_7 y_8 y_9 - y_1 y_2 y_4 y_7 y_8 y_9 + 
y_4 y_5 y_6 y_7 y_8 y_9 + y_1 y_2 y_4 y_5 y_6 y_7 y_8 y_9)\\
\hline
&(-1 - y_1 - y_4 + 
y_1 y_5 - y_6 + y_1 y_5 y_6 - y_3 y_8 - y_7 y_8 - y_4 y_7 y_8 + y_1 y_5 y_7 y_8\\
& - 
y_9 - y_4 y_9 + y_4 y_5 y_6 y_9 - y_3 y_8 y_9 + y_3 y_5 y_6 y_8 y_9 - y_7 y_8 y_9 -
y_4 y_7 y_8 y_9 + y_4 y_5 y_6 y_7 y_8 y_9)
\\
&z_2 =\\
& -(1 - y_2 - y_2 y_4 - y_5 - y_2 y_6 - y_5 y_6 - y_8 - y_2 y_3 y_8 - 
  y_2 y_7 y_8 - y_2 y_4 y_7 y_8 - y_5 y_7 y_8 - y_2 y_9 - y_2 y_4 y_9 + 
  y_2 y_4 y_5 y_6 y_9 - y_8 y_9\\
& - y_2 y_3 y_8 y_9 + y_5 y_6 y_8 y_9 + 
  y_2 y_3 y_5 y_6 y_8 y_9 - y_2 y_7 y_8 y_9 - y_2 y_4 y_7 y_8 y_9 + 
  y_2 y_4 y_5 y_6 y_7 y_8 y_9)\\
  \hline
  &(-1 - y_1 y_2 + y_5 + y_1 y_2 y_5 + y_5 y_6 + 
  y_1 y_2 y_5 y_6 + y_8 + y_5 y_7 y_8 + y_1 y_2 y_5 y_7 y_8 + y_8 y_9 - 
  y_5 y_6 y_8 y_9)
  \\
&z_3 =\\
& (-1 - y_1 - y_4 + y_1 y_5 - y_6 + y_1 y_5 y_6 - y_3 y_8 - y_7 y_8 - 
y_4 y_7 y_8 + y_1 y_5 y_7 y_8 - y_9 - y_4 y_9 + y_4 y_5 y_6 y_9 - y_3 y_8 y_9\\
& + 
y_3 y_5 y_6 y_8 y_9 - y_7 y_8 y_9 - y_4 y_7 y_8 y_9 + 
y_4 y_5 y_6 y_7 y_8 y_9)\\
\hline
&(-1 - y_1 - y_3 - y_1 y_2 y_3 - y_4 + y_1 y_5 + 
y_3 y_5 + y_1 y_2 y_3 y_5 - y_6 + y_1 y_5 y_6 + y_3 y_5 y_6 + y_1 y_2 y_3 y_5 y_6\\
& -
 y_7 y_8 - y_4 y_7 y_8 + y_1 y_5 y_7 y_8 + y_3 y_5 y_7 y_8 + 
y_1 y_2 y_3 y_5 y_7 y_8 - y_9 - y_4 y_9 + y_4 y_5 y_6 y_9 - y_7 y_8 y_9 - 
y_4 y_7 y_8 y_9 + y_4 y_5 y_6 y_7 y_8 y_9)
\\
&z_4 =\\
& -(
   1 + y_1 y_2 + y_4 y_5 + y_1 y_2 y_4 y_5 - y_8 + y_1 y_5 y_8 + y_3 y_5 y_8 + 
    y_1 y_2 y_3 y_5 y_8 + y_4 y_5 y_7 y_8 + y_1 y_2 y_4 y_5 y_7 y_8 - y_8 y_9)\\
    \hline
&(-1 - 
    y_1 y_2 - y_4 - y_1 y_2 y_4 + y_4 y_5 y_6 + y_1 y_2 y_4 y_5 y_6 + y_8 + y_4 y_8 - 
    y_1 y_5 y_8 - y_3 y_5 y_8 - y_1 y_2 y_3 y_5 y_8 + y_8 y_9 + y_4 y_8 y_9 - 
    y_4 y_5 y_6 y_8 y_9)
    \\
&z_5 =\\
& -(-1 - y_1 y_2 + y_5 + y_1 y_2 y_5 + y_5 y_6 + y_1 y_2 y_5 y_6 + y_8 + 
    y_5 y_7 y_8 + y_1 y_2 y_5 y_7 y_8 + y_8 y_9 - y_5 y_6 y_8 y_9)\\
    \hline
&(
   1 + y_1 y_2 + y_4 y_5 + y_1 y_2 y_4 y_5 - y_8 + y_1 y_5 y_8 + y_3 y_5 y_8 + 
   y_1 y_2 y_3 y_5 y_8 + y_4 y_5 y_7 y_8 + y_1 y_2 y_4 y_5 y_7 y_8 - y_8 y_9)
   \\
&z_6 =\\
& -(-1 - y_1 y_2 - y_4 - y_1 y_2 y_4 + y_4 y_5 y_6 + y_1 y_2 y_4 y_5 y_6 - 
  y_1 y_8 - y_3 y_8 - y_1 y_2 y_3 y_8 - y_6 y_8 + y_1 y_5 y_6 y_8\\
& + 
  y_3 y_5 y_6 y_8 + y_1 y_2 y_3 y_5 y_6 y_8 - y_7 y_8 - y_1 y_2 y_7 y_8 - 
  y_4 y_7 y_8 - y_1 y_2 y_4 y_7 y_8 + y_4 y_5 y_6 y_7 y_8 + 
  y_1 y_2 y_4 y_5 y_6 y_7 y_8)\\
  \hline
&(1 + y_1 y_2 + y_4 + y_1 y_2 y_4 + y_6 + 
  y_1 y_2 y_6 + y_1 y_8 + y_3 y_8 + y_1 y_2 y_3 y_8 + y_7 y_8 + y_1 y_2 y_7 y_8 + 
  y_4 y_7 y_8 + y_1 y_2 y_4 y_7 y_8 - y_6 y_8 y_9)
  \\
&z_7 =\\
& (-1 - y_1 - y_3 - y_1 y_2 y_3 - y_4 + y_1 y_5 + y_3 y_5 + y_1 y_2 y_3 y_5 - 
y_6 + y_1 y_5 y_6 + y_3 y_5 y_6 + y_1 y_2 y_3 y_5 y_6 - y_7 y_8 - y_4 y_7 y_8\\
& + 
y_1 y_5 y_7 y_8 + y_3 y_5 y_7 y_8 + y_1 y_2 y_3 y_5 y_7 y_8 - y_9 - y_4 y_9 + 
y_4 y_5 y_6 y_9 - y_7 y_8 y_9 - y_4 y_7 y_8 y_9 + y_4 y_5 y_6 y_7 y_8 y_9)\\
\hline
&(-1 - 
y_1 - y_3 - y_1 y_2 y_3 - y_4 + y_1 y_5 + y_3 y_5 + y_1 y_2 y_3 y_5 - y_6 + 
y_1 y_5 y_6 + y_3 y_5 y_6 +\\
& y_1 y_2 y_3 y_5 y_6 - y_7 - y_1 y_2 y_7 - y_4 y_7 - 
y_1 y_2 y_4 y_7 + y_4 y_5 y_6 y_7 + y_1 y_2 y_4 y_5 y_6 y_7 - y_9 - y_4 y_9 + 
y_4 y_5 y_6 y_9)
\\
&z_8 =\\
& (-1 - y_1 y_2 - y_4 - y_1 y_2 y_4 + y_4 y_5 y_6 + y_1 y_2 y_4 y_5 y_6 + y_8 + 
y_4 y_8 - y_1 y_5 y_8 - y_3 y_5 y_8 - y_1 y_2 y_3 y_5 y_8 + y_8 y_9 + y_4 y_8 y_9 -
 y_4 y_5 y_6 y_8 y_9)\\
 \hline
&(-1 - y_1 y_2 - y_4 - y_1 y_2 y_4 + y_4 y_5 y_6 + 
y_1 y_2 y_4 y_5 y_6 - y_1 y_8 - y_3 y_8 - y_1 y_2 y_3 y_8 - y_6 y_8 + 
y_1 y_5 y_6 y_8\\
& + y_3 y_5 y_6 y_8 + y_1 y_2 y_3 y_5 y_6 y_8 - y_7 y_8 - 
y_1 y_2 y_7 y_8 - y_4 y_7 y_8 - y_1 y_2 y_4 y_7 y_8 + y_4 y_5 y_6 y_7 y_8 + 
y_1 y_2 y_4 y_5 y_6 y_7 y_8)
\\
&z_9 =\\
&-(1 + y_1 y_2 + y_4 + y_1 y_2 y_4 + y_6 + y_1 y_2 y_6 + y_1 y_8 + y_3 y_8 + 
  y_1 y_2 y_3 y_8 + y_7 y_8 + y_1 y_2 y_7 y_8 + y_4 y_7 y_8 + y_1 y_2 y_4 y_7 y_8 -
   y_6 y_8 y_9)\\
   \hline
&(-1 - y_1 y_2 - y_4 - y_1 y_2 y_4 - y_6 - y_1 y_2 y_6 - 
  y_1 y_8 - y_3 y_8 - y_1 y_2 y_3 y_8 - y_7 y_8 - y_1 y_2 y_7 y_8 - y_4 y_7 y_8 - 
  y_1 y_2 y_4 y_7 y_8\\
& - y_9 - y_1 y_2 y_9 - y_4 y_9 - y_1 y_2 y_4 y_9 + 
  y_4 y_5 y_6 y_9 + y_1 y_2 y_4 y_5 y_6 y_9 - y_1 y_8 y_9 - y_3 y_8 y_9 - 
  y_1 y_2 y_3 y_8 y_9 + y_1 y_5 y_6 y_8 y_9\\
& + y_3 y_5 y_6 y_8 y_9 + 
  y_1 y_2 y_3 y_5 y_6 y_8 y_9 - y_7 y_8 y_9 - y_1 y_2 y_7 y_8 y_9 - 
  y_4 y_7 y_8 y_9 - y_1 y_2 y_4 y_7 y_8 y_9 + y_4 y_5 y_6 y_7 y_8 y_9 + 
  y_1 y_2 y_4 y_5 y_6 y_7 y_8 y_9) 
\end{aligned}
\fontsize{12}{12}
\ee  
and 
\be
\begin{tiny}
\begin{aligned}
  &\delta(y)\\ & \=1 - y_1 - y_3 - y_1 y_2 y_3 - y_4 - y_1 y_5 - y_3 y_5
  - y_1 y_2 y_3 y_5 - y_6 + 
y_1 y_5 y_6 + y_3 y_5 y_6 + y_1 y_2 y_3 y_5 y_6 \\ & \hspace{0.6cm}
- y_7 - y_1 y_2 y_7 + y_4 y_7 + 
 y_1 y_2 y_4 y_7 - y_4 y_5 y_6 y_7 - y_1 y_2 y_4 y_5 y_6 y_7 - y_9 + y_4 y_9 - 
 y_4 y_5 y_6 y_9 \\ &
 \= (z_4)^{-1} - (z_4 z_6)^{-1} - (z_4 z_8)^{-1} + (z_4 z_6 z_8)^{-1}
 + (z_3 z_9)^{-1} - (z_2 z_3 z_9)^{-1} + (z_1 z_2 z_3 z_9)^{-1}
 - (z_4 z_9)^{-1} - (z_3 z_5 z_9)^{-1} \\ & + (z_2 z_3 z_5 z_9)^{-1}
 - (z_1 z_2 z_3 z_5 z_9)^{-1} + (z_4 z_5 z_9)^{-1} + (z_4 z_6 z_9)^{-1}
 - (z_3 z_8 z_9)^{-1} + (z_2 z_3 z_8 z_9)^{-1} - (z_1 z_2 z_3 z_8 z_9)^{-1}
 \\ & + (z_4 z_8 z_9)^{-1} + (z_3 z_5 z_8 z_9)^{-1} - (z_2 z_3 z_5 z_8 z_9)^{-1}
 + (z_1 z_2 z_3 z_5 z_8 z_9)^{-1} - (z_4 z_5 z_8 z_9)^{-1}
 - (z_4 z_6 z_8 z_9)^{-1} + (z_3 z_7 z_8 z_9)^{-1} \\ & 
 - (z_2 z_3 z_7 z_8 z_9)^{-1} + (z_1 z_2 z_3 z_7 z_8 z_9)^{-1}
 - (z_3 z_4 z_7 z_8 z_9)^{-1} + (z_2 z_3 z_4 z_7 z_8 z_9)^{-1}
 - (z_1 z_2 z_3 z_4 z_7 z_8 z_9)^{-1} \\ & - (z_3 z_5 z_7 z_8 z_9)^{-1}  
 + (z_2 z_3 z_5 z_7 z_8 z_9)^{-1} - (z_1 z_2 z_3 z_5 z_7 z_8 z_9)^{-1}
 + (z_3 z_4 z_5 z_7 z_8 z_9)^{-1} - (z_2 z_3 z_4 z_5 z_7 z_8 z_9)^{-1} \\ &
 + (z_1 z_2 z_3 z_4 z_5 z_7 z_8 z_9)^{-1} + (z_3 z_4 z_6 z_7 z_8 z_9)^{-1}
 - (z_2 z_3 z_4 z_6 z_7 z_8 z_9)^{-1} + (z_1 z_2 z_3 z_4 z_6 z_7 z_8 z_9)^{-1} \,.
\end{aligned}
\end{tiny}
\ee
\end{example}

The next lemma relates the effect of the specialization~\eqref{yt} to
$z_i=z_i(y)$. 

\begin{lemma}
\label{lem.zt}
If $A$ is the matrix associated to a braid word, then the Equations~\eqref{eq:nahm}
with $y_i$ specialized as in~\eqref{yt} has solution $z_i=t^{\ve_i}$.
\end{lemma}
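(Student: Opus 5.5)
We will verify directly that $z_i=t^{\ve_i}$ satisfies the Nahm equations~\eqref{eq:nahm} with $y_i=(t^{\ve_i}-1)t^{s(i)d_i/2}$. Uniqueness then comes from Theorem~\ref{thm:lag.inv}, since the solution there is the unique one in the power series ring. With these values, the equation for the $i$-th crossing becomes
\be
1-t^{\ve_i} \= -(t^{\ve_i}-1)\,t^{s(i)d_i/2}\prod_j t^{\ve_j A_{ij}},
\ee
which is equivalent to the purely combinatorial identity
\be
\label{eq.Adi}
\sum_{j=1}^N A_{ij}\ve_j \= -\tfrac{1}{2}s(i)d_i \qquad (i=1,\dots,N).
\ee
So the whole lemma reduces to~\eqref{eq.Adi}. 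We also have to check that the specialization is legitimate in the power series setting: the substituted $y_i$ vanish at $t=1$, so after setting $t=1+\epsilon$ the series $z_i(y)$ converge $\epsilon$-adically, and $t^{\ve_i}\equiv 1$ is the correct constant term.

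To prove~\eqref{eq.Adi}, we read off $A$ from the state sum. Row $i$ of $A$ records the coefficient of $k_i$ in the strand labels $(A^tk)_j$ that feed the binomial at each crossing $j$. From the labeling rule in Figure~\ref{fig:9_44}, the variable $k_i$ is added (with sign $s(i)$) to the label of the strand leaving crossing $i$. It is carried along the knot until the strand returns to crossing $i$, where it is removed. Hence $k_i$ appears, with sign $s(i)$, exactly in the strand labels along the cycle $\gamma_i$. This means $A_{ij}$ equals $s(i)$ times the number of times $\gamma_i$ passes through crossing $j$ \emph{on the strand whose label enters the binomial}: the upper argument of $\binom{i}{k}$ for $R^+$, and of $\binom{j}{k}$ for $R^-$. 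A separate correction is needed at crossing $i$ itself, where the label is counted at its start or end depending on $s(i)$ and $\ve_i$.

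Summing $A_{ij}\ve_j$ then counts the positive minus negative crossings met by $\gamma_i$, with the correct orientation of passage, times $s(i)$. The boundary term at crossing $i$ produces exactly the correction $\ve_i(1+s(i))$ appearing in the definition of $d_i$. This local bookkeeping at crossing $i$ and at the crossings $\gamma_i$ traverses twice is the main obstacle. We would handle it by a case analysis over $\ve_i=\pm1$ and $s(i)=\pm1$, together with a check of which of the two strands at each crossing $j$ is the binomial strand. The $9_{44}$ data ($A$, $\ve$, $s$, $d$ listed above) serves as a sanity check of~\eqref{eq.Adi}. Row 4 gives $\sum_j A_{4j}\ve_j=\ve_4+\ve_5=0$, matching $d_4=0$. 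Row 1 gives $\ve_1+\ve_2+\ve_4+\ve_5+\ve_6+\ve_8+\ve_9=-1$, matching $-s(1)d_1/2=-1$.

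A consistency check we would also carry out is that this identity is exactly what makes Lemma~\ref{lem.yt} compatible with Theorem~\ref{thm:lag.inv} at $s=0$. There, $\delta(y)$ evaluated at $z_i=t^{\ve_i}$ should equal $\Delta_K(t)$ up to the normalizing power of $t$, in agreement with~\eqref{fAB1}.
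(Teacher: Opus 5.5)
\textbf{There is a genuine gap.} Your reduction is correct and is the same computation the paper does. After substituting $z_j=t^{\ve_j}$ and $y_i=(t^{\ve_i}-1)t^{s(i)d_i/2}$, the lemma is equivalent to $\sum_j A_{ij}\ve_j=-\tfrac12 s(i)d_i$. Your reading of row $i$ of $A$ is also right: $k_i$ enters with sign $s(i)$ exactly at the passages of $\gamma_i$ along the binomial strand, which is the over-strand, with $A_{ii}=(1+s(i))/2$.

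The gap is in the next step. $\sum_j A_{ij}\ve_j$ only sees the \emph{over}-passages of $\gamma_i$, whereas $d_i$ counts \emph{all} crossings met by $\gamma_i$. So your sentence that summing $A_{ij}\ve_j$ ``counts the positive minus negative crossings met by $\gamma_i$'' is not correct. Writing it out (using $s(i)(1+s(i))=1+s(i)$), the required identity is
\[
\sum_{j\in\gamma_i(\mathrm{over})}\ve_j \;=\; \frac12\sum_{j\in\gamma_i}\ve_j ,
\qquad\text{equivalently}\qquad
\sum_{j\in\gamma_i(\mathrm{over})}\ve_j \;=\; \sum_{j\in\gamma_i(\mathrm{under})}\ve_j .
\]
This is exactly where the factor $\tfrac12$ in $-\tfrac12 s(i)d_i$ comes from. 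No case analysis over $\ve_i$, $s(i)$ and the local choice of binomial strand can produce it. Crossings that $\gamma_i$ passes twice are harmless, since they are traversed once over and once under. But the crossings $\gamma_i$ meets only once, i.e.\ with the rest of the knot, can be over- or under-passages in any local pattern. Only a global constraint forces their signed counts to agree. Your $9_{44}$ checks confirm the identity numerically but do not explain it.

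\textbf{The missing idea is topological.} The paper uses the abelian representation $\pi_1(S^3\setminus K)\to\BC^\times$ sending a meridian to $t^{1/2}$. The monodromy of the closed cycle $\gamma_i$, pushed off the knot, can be computed by pushing it above the diagram or below it. These two computations give $\prod_{j\in\gamma_i(\mathrm{over})}t^{\ve_j/2}=\prod_{j\in\gamma_i(\mathrm{under})}t^{\ve_j/2}$.

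Equivalently, take the oriented smoothing at crossing $i$. This splits $K$ into $\gamma_i$ and a complementary closed curve $\gamma_i'$, and all other crossing signs are preserved. Self-crossings of $\gamma_i$ contribute equally to both sums. The mixed crossings give $\mathrm{lk}(\gamma_i,\gamma_i')$, whether you count those where $\gamma_i$ is over or those where it is under.

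With this input your argument closes. Your remarks on uniqueness via Theorem~\ref{thm:lag.inv} and on $(t-1)$-adic convergence of the specialization are fine.
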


\begin{proof}
We follow the notation of Lemma~\ref{lem.yt}. 
With an abuse of notation, denote $j\in\gamma_i$ for
every crossing $j$ encountered by $\gamma$ (as usual counted with multiplicity).
Let $\gamma_i(\text{over})$ and $\gamma_i(\text{under})$ denote the set of over-
and under-crossings encountered by $\gamma$ respectively, and $\sigma_i(j)=\pm1$
to be positive if and only if $j\in\gamma_i(\text{over})$.
We can describe the matrix $A$ in similar terms but this just stores the
over-crossings. Explicitly, we have:
\be
A_{ii}=(1+s(i))/2, \qquad 
A_{ij}=s(i)(1+\sigma_i(j))/2 \qquad (i\neq j) \,.
\ee

We can then compute the RHS of the Equations~\eqref{eq:nahm} when we substitute
$z_i=t^{\ve_i}$ and $y_i$ as in Lemma~\ref{lem.yt} and, since
$d_i=-\sum_{j\in\gamma_i}\ve_j$, we find that
\be
  -(t^{\ve_i}-1)
  \prod_{j\in\gamma_i}
  t^{-s(i)\ve_j/2}
  \prod_{j\in\gamma_i(\text{over})}
  t^{s(i)\ve_j}
  \=
  (1-t^{\ve_i})
  \prod_{j\in\gamma_i}
  t^{\sigma_i(j)s(i)\ve_j/2} \,.
\ee

If we consider the representation of $\pi_1(S^3\smallsetminus K)\to\BC^\times$
with the loop around the knot having monodromy $t^{1/2}$,
then we can compute the
monodromy of the cycle $\gamma$ in two ways.
Either we can push the contour away or pull it towards us.
These two computations
give $\prod_{j\in\gamma_i(\text{over})}t^{\ve_j/2}
=\prod_{j\in\gamma_i(\text{under})}t^{\ve_j/2}$ respectively.
Therefore, we see that the RHS of Equations~\eqref{eq:nahm} under the
specialization is equal to $1-t^{\ve_i}$, which is equal to the LHS of
Equations~\eqref{eq:nahm} under the same specialization.

Therefore, for $y_i=(t^{\ve_i}-1)t^{s(i)d_i/2}$, the specialization for the
quantum invariant, $z_i=t^{\ve_i}$ is the unique solution to
Equations~\eqref{eq:nahm} from Theorem~\ref{thm:lag.inv}.
\end{proof}

We end this section with an example that includes everything about the
$q\to1$ limit of the state-sum, its deformation, and its rationality properties 
in one place. 

\begin{example}
\label{ex.41}
Consider the figure eight knot $4_1$ described by the closure of a braid given by
$\sigma_1^{-1} \sigma_2 \sigma_1^{-1} \sigma_2$
on three strands.
This is pictured as follows:
\begin{center}
\begin{tikzpicture}[scale=2]
\foreach \x in {2,3,4} \draw[yshift=\x cm,opacity=0.25](-0.2,0)--(3.2,0);

\draw[yshift=1 cm,opacity=0.25](-0.2,0)--(4.2,0);
\draw[yshift=5 cm,opacity=0.25](-0.2,0)--(4.2,0);
\draw[thick,->,xshift=0cm,yshift=1cm](0,0) to[out angle=90, in angle=-90,
curve through= {(0.5,0.5)}] (1,1);
\filldraw[white,xshift=0cm,yshift=1cm](0.5,0.5) circle (0.1cm) node[red,right]
{$\;\; k_1$};
\draw[thick,->,xshift=0cm,yshift=1cm] (1,0) to[out angle=90, in angle=-90,
curve through = {(0.5,0.5)}] (0,1);

\draw[thick,->,xshift=2cm,yshift=1cm](0,0)--(0,1);

\draw[thick,->,xshift=1cm,yshift=2cm] (1,0) to[out angle=90, in angle=-90,
curve through = {(0.5,0.5)}] (0,1);
\filldraw[white,xshift=1cm,yshift=2cm](0.5,0.5) circle (0.1cm) node[red,right]
{$\;\; k_3$};
\draw[thick,->,xshift=1cm,yshift=2cm](0,0) to[out angle=90, in angle=-90,
curve through= {(0.5,0.5)}] (1,1);

\draw[thick,->,xshift=0cm,yshift=2cm](0,0)--(0,1);

\draw[thick,->,xshift=0cm,yshift=3cm](0,0) to[out angle=90, in angle=-90,
curve through= {(0.5,0.5)}] (1,1);
\filldraw[white,xshift=0cm,yshift=3cm](0.5,0.5) circle (0.1cm) node[red,right]
{$\;\; k_2$};
\draw[thick,->,xshift=0cm,yshift=3cm] (1,0) to[out angle=90, in angle=-90,
curve through = {(0.5,0.5)}] (0,1);

\draw[thick,->,xshift=2cm,yshift=3cm](0,0)--(0,1);

\draw[thick,->,xshift=1cm,yshift=4cm] (1,0) to[out angle=90, in angle=-90,
curve through = {(0.5,0.5)}] (0,1);
\filldraw[white,xshift=1cm,yshift=4cm](0.5,0.5) circle (0.1cm) node[red,right]
{$\;\; k_4$};
\draw[thick,->,xshift=1cm,yshift=4cm](0,0) to[out angle=90, in angle=-90,
curve through= {(0.5,0.5)}] (1,1);

\draw[thick,->,xshift=0cm,yshift=4cm](0,0)--(0,1);

\draw[thick,->](2,5) to[out angle=90, in angle=90, curve through= {(2.5,5.5)}] (3,5);
\draw[thick,->](3,5)--(3,1);
\draw[thick,->](3,1) to[out angle=-90, in angle=-90, curve through= {(2.5,0.5)}] (2,1);

\draw[thick,->](1,5) to[out angle=90, in angle=90, curve through= {(3,6)}] (4,5);
\draw[thick,->](4,5)--(4,1);
\draw[thick,->](4,1) to[out angle=-90, in angle=-90, curve through= {(3,0)}] (1,1);

\draw[thick,->](0,0)--(0,1);
\draw[thick,->](0,5)--(0,6);
\draw[blue](0.1,0.5) node {\tiny $0$};
\draw[blue](1.1,2.1) node {\tiny $k_1$};
\draw[blue](2,3.5) node {\tiny $k_1-k_3$};
\draw[blue](4,3.5) node {\tiny $k_1-k_3+k_4$};
\draw[blue](0,2.5) node {\tiny $-k_3+k_4$};
\draw[blue](1,4.1) node {\tiny $k_2-k_3+k_4$};
\draw[blue](3,2.5) node {\tiny $-k_3+k_2$};
\draw[blue](1.1,3.1) node {\tiny $k_2$};
\draw[blue](0.1,5.5) node {\tiny $0$};
\end{tikzpicture}
\end{center}

This leads to the quantum invariant
\be
\begin{tiny}
\begin{aligned}
  &\sum_{k\in\BZ_{\geq0}^4}
  (-1)^{k_3+k_4}
  q^{2k_1k_2-k_1k_3-k_2k_3-k_3k_4+3k_3(k_3+1)/2+k_4(k_4-1)/2-k_1-k_2-k_4}
  t^{-k_1-k_2+1}\\
  &\qquad\bigg[\!\!\begin{array}{c} k_1-k_3+k_4\\k_1\end{array}\!\!\bigg]_q
  \bigg[\!\!\begin{array}{c} k_1\\k_3\end{array}\!\!\bigg]_q
  \bigg[\!\!\begin{array}{c} k_2-k_3+k_4\\k_4\end{array}
  \!\!\bigg]_q(t;q^{-1})_{k_1}(q^{k_3-k_4}t;q^{-1})_{k_2}
  (q^{k_3-k_2}t;q^{-1})_{k_3}(q^{-k_1+k_3}t;q^{-1})_{k_4}\,.
\end{aligned}
\end{tiny}
\ee


This braid word leads to the deformed sum
\be
\label{delta41}
\begin{aligned}
  \frac{1}{\delta}
  &\=
  \sum_{k\in\BZ_{\geq0}^4}
  \binom{k_1-k_3+k_4}{k_1}
  \binom{k_2}{k_2}
  \binom{k_1}{k_3}
  \binom{k_2-k_3+k_4}{k_4}
  y^k\\
  &\=
  \frac{1}{1-y_1-y_2-y_4+y_1y_2-y_1y_2y_3y_4}\,.
\end{aligned}
\ee
Specializing
\be
\label{y41t}
y_1\=y_2\=t^{-1}-1, \qquad y_3\=y_4\=t-1
\ee
gives 
\be
  \frac{t^{-1}}{-t^{-1}+3-t} 
\ee
in agreement with the fact that the Alexander polynomial of $4_1$ is $-t^{-1}+3-t$.
The combinatorial data associated with this braid word is
\be
A
\=
\begin{pmatrix}
  1 & 0 & 1 & 0\\
  0 & 1 & 0 & 1\\
  -1 & 0 & 0 & -1\\
  1 & 0 & 0 & 1
\end{pmatrix}\!,
\quad
B
\= \begin{pmatrix}
  0 & 0 & 0 & 1\\
  0 & 0 & 1 & 0\\
  0 & -1 & -1 & -1\\
  0 & 1 & 0 & 0
\end{pmatrix}\!,
\quad
\ve \= \begin{pmatrix}
-1 \\ -1 \\ 1 \\ 1  
\end{pmatrix}\!.
\ee
and
\be
\begin{aligned}
  s&\=(\phantom{-}1,\phantom{-}1,-1,\phantom{-}1)\,,\\
  d&
    \=(\phantom{-}0,\phantom{-}0,\phantom{-}0,\phantom{-}0)\,.
\end{aligned}
\ee
The Nahm equations associated to this matrix are 
\be
\label{nahm41}
1-z_1\=-y_1z_1z_3\,,\quad
  1-z_2\=-y_2z_2z_4\,,\quad
  1-z_3\=-y_3z_1^{-1}z_4^{-1}\,,\quad
  1-z_4\=-y_4z_1z_4\,.
\ee
These equations have a unique solution
\be
\begin{array}{llll}
  z_1&\=\frac{-1 - y_1 y_3}{-1 + y_1 - y_1 y_3 y_4}\,,\quad
  &z_2\!&\=\frac{-1 + y_1 + y_4}{-1 + 
  y_1 + y_2 - y_1 y_2 + y_4 + y_1 y_2 y_3 y_4}\,,\\
  \\
  z_3&\=\frac{1 + y_3 - y_3 y_4}{1 + y_1 y_3}\,,\quad
  &z_4\!&\=\frac{1 - y_1 + y_1 y_3 y_4}{1 - y_1 - y_4}\,.
\end{array}
\ee
This solution can be used to express $\delta$ as follows
\be
\begin{aligned}
\delta &\= 1-y_1-y_2-y_4+y_1y_2-y_1y_2y_3y_4\\ &\=
z_2^{-1} - (z_1 z_2)^{-1} - (z_2 z_3)^{-1} + (z_1 z_2 z_3)^{-1} + (z_1 z_2 z_4)^{-1}
\,.
\end{aligned}
\ee
Specializing $z_1^{-1}=z_2^{-1}=z_3=z_4=t$ in Equations~\eqref{nahm41}
gives~\eqref{y41t}.
\end{example}


\section{A review of the Habiro ring of an \'etale $\BZ[t]$-algebra}
\label{sec.hab}

\subsection{Basics on $p$-adic completions}

If $R$ is a ring of characteristic $0$, then for each prime $p\in\BZ$, we can
construct two rings: the reduction of $R$ modulo $p$ defined by $R/pR$, and the
completion of $R$ at $p$ defined by $R_p=\varprojlim_{n} R/p^nR$.
From its definition, we have natural maps $R\to R_p\to R/pR$.
An immediate consequence of the binomial theorem implies that if a ring $R$ has characteristic
$0$, then the $p$-power map $F_p:R/pR\to R/pR$ defined so that $x\mapsto x^p$ is a
ring homomorphism.
A \emph{Frobenius lift} is a homomorphism $\varphi_p:R_p\to R_p$ such that
$\varphi_p(x)\equiv x^p\pmod{p}$.

In this paper, we are interested in $R=\BZ[t^{\pm1},1/\Delta(t)]$ where $\Delta(t)\in\BZ[t^{\pm1}]$.
In this example, we see that elements in $Q(t)\in R$ can be expressed
$Q(t)=P(t)/\Delta(t)^n$ for some $n\in\BZ_{\geq0}$ and $P(t)\in\BZ[t^{\pm1}]$.
We see that $Q(t)\in pR$ when $P(t)\in p\BZ[t^{\pm1}]$.
Then we can express elements in $R_p$ non-uniquely by sums
\be
  \sum_{k=0}^{\infty}
  Q_{k}(t)p^k
  \in
  R_{p}\,,
\ee
where $Q_k(t)\in R$. We see that, if we choose some\footnote{One can simply
  think of any of the subsets $\BZ\subseteq\BQ\subseteq\BQ_p\subseteq\BC_p$.}
$t\in\BC_p$ with $|t|_p\leq 1$ and $|\Delta(t)|_p\geq 1$, elements in $R_p$ can
be evaluated at $t$.

There are many different Frobenius lifts $\varphi_p:R_p\to R_p$.
There is a somewhat canonical lift given by $a=0$ in the following:
\begin{proposition}
  If $R=\BZ[t^{\pm1},1/\Delta(t)]$ for $\Delta(t)\in\BZ[t^{\pm1}]$, then for $a\in\BZ[t^{\pm1}]$ the
  map $t\mapsto (t+a)^p-a^p$ defines a Frobenius lift on $R_p$.
\end{proposition}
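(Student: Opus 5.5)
We must show that $\varphi_p(t)=(t+a)^p-a^p$ extends to a ring endomorphism of $R_p$, where $R=\BZ[t^{\pm1},1/\Delta(t)]$, and that it reduces to the Frobenius modulo $p$. The plan is to define $\varphi_p$ on the generators, check that both $t$ and $\Delta(t)$ are sent to units of $R_p$ (so the map exists by the universal property of localization and then passes to the completion), and finally check the congruence.

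First, define $\varphi_p$ on $\BZ[t]$ by $t\mapsto T:=(t+a)^p-a^p$. By the binomial theorem,
\[
T=t^p+\sum_{0<j<p}\binom{p}{j}t^ja^{p-j}\equiv t^p \pmod{p\BZ[t^{\pm1}]},
\]
and so $T=t^p(1+pu)$ with $u\in\BZ[t^{\pm1}]$. Since $1+pu$ is invertible in any $p$-adically complete ring via the geometric series $\sum_n(-pu)^n$, $T$ is a unit in $R_p$. Hence the map extends to $\BZ[t^{\pm1}]\to R_p$.

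Next, for any polynomial $P\in\BZ[t^{\pm1}]$, the same reasoning gives $P(T)\equiv P(t^p)\equiv P(t)^p \pmod p$. The first congruence holds because $T\equiv t^p$ and $T^{-1}\equiv t^{-p}$ modulo $p$; the second is Frobenius in characteristic $p$. Applying this to $\Delta$, we get $\Delta(T)=\Delta(t)^p+pw$ with $w\in\BZ[t^{\pm1}]$. Because $\Delta(t)$ is a unit in $R$, this can be written as $\Delta(T)=\Delta(t)^p\bigl(1+p\,w\,\Delta(t)^{-p}\bigr)$, which is again a unit in $R_p$. By the universal property of localization, we obtain a ring map $R\to R_p$.

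This map sends $pR$ into $pR_p$, so it is continuous for the $p$-adic topology and extends uniquely to $\varphi_p:R_p\to R_p$. Finally, we must check $\varphi_p(x)\equiv x^p \pmod p$ for all $x\in R_p$. The map $x\mapsto\varphi_p(x)$ modulo $p$ and the Frobenius $F_p$ are both ring homomorphisms $R_p\to R_p/p=R/p$. They agree on $t$ by the computation above, and therefore they agree on $t^{-1}$ and on $\Delta^{-1}$, since ring maps preserve inverses. Consequently they agree on all of $R$, and hence on $R_p$, because $R_p/p=R/p$.

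The only point requiring care is the invertibility of $\Delta(T)$ in $R_p$. It is handled by the congruence $\Delta(T)\equiv\Delta(t)^p \pmod p$ together with the fact that an element which is a unit modulo $p$ is a unit in a $p$-complete ring.
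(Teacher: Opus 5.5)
Your proof is correct and takes the same route as the paper: its key step is also to write $\Delta\bigl((t+a)^p-a^p\bigr)=\Delta(t)^p-p\,\delta(t)$ and invert this in $R_p$ by a geometric series in $p$. You are more careful than the paper in three places: you check that $T=(t+a)^p-a^p$ is a unit in $R_p$ (it is generally not a unit in $\BZ[t^{\pm1}]$, despite the paper's ``of course sends $\BZ[t^{\pm1}]$ to $\BZ[t^{\pm1}]$''), you extend the map to the completion, and you verify $\varphi_p(x)\equiv x^p \pmod p$ on all of $R_p$. The only slip is that $w$ lies in $R_p$ rather than $\BZ[t^{\pm1}]$ when $\Delta$ has negative powers of $t$, which is harmless because your closing unit argument covers it.
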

\begin{proof}
  Firstly, let $\delta(t)=p^{-1}(\Delta(t)^p-\Delta((t+a)^p-a^p))\in\BZ[t^{\pm1}]$. Then
  we find that
\be
  \frac{1}{\Delta((t+a)^p-a^p)}
  \=
  \frac{1}{\Delta(t)^p-p\delta(t)}
  \=
  \sum_{k=0}^{\infty}\frac{\delta(t)^k}{\Delta(t)^{p(k+1)}}p^k\in R_p\,.
\ee
Therefore, this map sends $1/\Delta(t)$ to $R_p$ and of course sends $\BZ[t^{\pm1}]$
to $\BZ[t^{\pm1}]$.
This can be used to define a map $R_p\to R_p$
\end{proof}
\begin{remark}
Note that $R_p$ is not isomorphic to $\BZ_p[t^{\pm1},1/\Delta(t)]$.
For example, $\BZ_p[t]\subseteq\BZ[t]_p\subseteq\BZ_p[\![t]\!]$ and we have
\be
\begin{aligned}
  \sum_{\ell=0}^{L}
  \sum_{k=\ell}^\infty p^kt^\ell
  \=
  \sum_{k=0}^{L} p^k\frac{1-t^{k+1}}{1-t}
  +
  \sum_{k=L+1}^\infty p^k\frac{1-t^{L+1}}{1-t}
  &\in\BZ_p[t]\,,\\
  \sum_{\ell=0}^\infty
  \sum_{k=\ell}^\infty p^kt^\ell
  \=\sum_{k=0}^{\infty} p^k\frac{1-t^{k+1}}{1-t}
  \=\frac{1}{(1-p)(1-pt)}
  &\in\BZ[t]_p\,,\\
  \frac{1}{(1-p)(1-t)}
  \=\sum_{\ell,k=0}^\infty
  p^k t^\ell
  &\in\BZ_p[\![t]\!]\,.\\
\end{aligned}
\ee
We see that while the degree of $t$ is unbounded for the element in $\BZ[t]_p$,
for a fixed power of $p$ the degree in $t$ has a finite bound.
Consequently, elements in $\BZ[t]_p$ can be evaluated at any $|t|_p\leq 1$,
whereas elements of $\BZ_p[\![t]\!]$ can only be evaluated at $|t|_p<1$.
\end{remark}

\begin{proposition}
The ring $\BZ[t^{1/p^\infty},\tfrac{1}{\Delta(t)}]_p$ has a Frobenius lift
$t\mapsto t^p$ and this is an automorphism.
\end{proposition}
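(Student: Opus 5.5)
We must show that $S=\BZ[t^{1/p^\infty},\tfrac{1}{\Delta(t)}]$ has a well-defined ring endomorphism $\varphi_p$ of $S_p$ with $t^{1/p^k}\mapsto t^{p/p^k}=t^{1/p^{k-1}}$, that it lifts Frobenius, and that it is bijective.

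\textbf{Definition of $\varphi_p$ on $\BZ[t^{\pm1/p^\infty}]$.} On $\BZ[t^{\pm 1/p^\infty}]$ the map $t^{a}\mapsto t^{pa}$ ($a\in\BZ[1/p]$) is already a ring automorphism, with inverse $t^a\mapsto t^{a/p}$. It lifts Frobenius: modulo $p$ we have $(\sum c_a t^a)^p\equiv\sum c_a^p t^{pa}\equiv \sum c_at^{pa}$, since $c_a^p\equiv c_a$ for $c_a\in\BZ$.

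\textbf{Extension to $1/\Delta(t)$.} The image of $\Delta(t)$ is $\Delta(t^p)$, and we must show it is invertible in $S_p$. By the Frobenius congruence, $\Delta(t^p)=\Delta(t)^p-p\delta(t)$ with $\delta\in\BZ[t^{\pm1}]$. As in the preceding proposition, this gives
\[
\Delta(t^p)^{-1}=\sum_{k\ge0}\delta(t)^k\Delta(t)^{-p(k+1)}p^k\in S_p .
\]
Hence $\varphi_p$ extends to a homomorphism $S\to S_p$. It is continuous for the $p$-adic topology, so it extends to $S_p\to S_p$. The congruence $\varphi_p(x)\equiv x^p \pmod p$ holds on the generators $t^{a}$ and $\Delta^{-1}$, hence on all of $S_p$, because $x\mapsto x^p$ is a ring homomorphism mod $p$ and $S_p/pS_p=S/pS$.

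\textbf{Inverse.} Define $\psi$ by $t^a\mapsto t^{a/p}$. The image of $\Delta(t)$ is $\Delta(t^{1/p})$, and we must invert it in $S_p$. Here we use the $p$-th roots: setting $u=t^{1/p}$, we have $\Delta(u)^p\equiv\Delta(u^p)=\Delta(t)\pmod p$. Thus $\Delta(u)^p=\Delta(t)+p\epsilon$ with $\epsilon\in\BZ[t^{\pm1/p^\infty}]$, so $\Delta(u)^p$ is a unit in $S_p$, with inverse given by a geometric series exactly as above. Therefore $\Delta(u)^{-1}=\Delta(u)^{p-1}\cdot(\Delta(u)^p)^{-1}\in S_p$, and $\psi$ extends continuously to $S_p$.

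\textbf{Conclusion.} The compositions $\varphi_p\circ\psi$ and $\psi\circ\varphi_p$ are continuous ring endomorphisms that are the identity on the dense subring generated by the $t^{a}$ and $\Delta^{-1}$. Their values on $\Delta^{-1}$ are forced to be $\Delta^{-1}$ by uniqueness of inverses. So both compositions are the identity and $\varphi_p$ is an automorphism.

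The only nontrivial point is inverting $\Delta(t^{1/p})$. This is exactly where adjoining all $p$-power roots of $t$ is needed: without them, $t\mapsto t^p$ is not surjective, already on $\BZ[t]_p$.
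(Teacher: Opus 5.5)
Your proof is correct and takes essentially the paper's approach. The paper's proof consists only of the step you identify as the crux: inverting $\Delta(t^{1/p})$ $p$-adically via the congruence $\Delta(t^{1/p})^{-1}\equiv \Delta(t^{1/p})^{p-1}\Delta(t)^{-1} \pmod p$. The paper builds the $p$-adic expansion digit by digit by induction, whereas you write it in closed form as $\Delta(u)^{p-1}(\Delta(u)^p)^{-1}$ with a geometric series, and you also make explicit the forward map, the Frobenius congruence on $S_p$, and that both composites are the identity, which the paper leaves implicit.
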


\begin{proof}
By induction, we see that there exists $P_k(t)\in\BZ[t^{1/p^\infty},1/\Delta(t)]$
such that
\be
  \frac{1}{\Delta(t^{1/p})}
  \=
  \sum_{k=0}^n P_k(t)p^k\pmod{p^{n+1}}\,,
\ee
since
\be
  \frac{1-\Delta(t^{1/p})\sum_{k=0}^n P_k(t)p^k}{p^{n+1}\Delta(t^{1/p})}
  \equiv
  \frac{1-\Delta(t^{1/p})\sum_{k=0}^n P_k(t)p^k}{p^{n+1}\Delta(t)}
  \Delta(t^{1/p})^{p-1}\pmod{p}\,.
\ee
\end{proof}

\subsection{$\BZ[t]$-\'etale algebras, their Habiro rings, and elements}

In this section, we will recall the basic properties of Habiro rings of affine space
and étale maps over it.

Firstly, we recall the relation between the Habiro ring and the original definition.
This relation is governed by the following.

\begin{proposition}
If $\varphi_p:R\to R$ and is an isomorphism onto its
image\footnote{The main condition is that the codomain does not need a
  $p$-adic completion.} for all primes $p$, then
\be
  \calH_{R/\BZ[t^{\pm 1}]}
  \cong
  \varprojlim_n R[q]/(q;q)_nR[q]\,.
\ee
\end{proposition}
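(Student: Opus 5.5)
The plan is to build the isomorphism explicitly. I would first \emph{untwist} the Frobenius in Definition~\ref{def.habR}, and then reduce to Habiro's theorem $\calH\cong\calH_\BZ$ of~\cite{Habiro:completion}, extended from $\BZ$ to $R$.

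\textbf{The forward map.} For $m=\prod p_i^{e_i}$ write $\vphi_m=\prod\vphi_{p_i}^{e_i}$. The hypothesis says each $\vphi_p$ is defined on $R$ itself, with no $p$-adic completion needed, and is injective. So $\vphi_m:R\to R$ is injective. Moreover, $\vphi_m^{-1}$ is defined on $\vphi_m(R)$ and is realized inside $R[t^{1/m}]$ by $t\mapsto t^{1/m}$.

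Take $f=\sum_k a_k(q)(q;q)_k$ with $a_k\in R[q]$, representing a class in $\varprojlim_n R[q]/(q;q)_nR[q]$. Since $(q-\z_m)^{\lfloor k/m\rfloor}$ divides $(q;q)_k$ in $\BZ[\z_m][q]$, the Taylor expansion at $q=\z_m$ converges in $R[\z_m][\![q-\z_m]\!]$. I would then set
\[
f_m(t,q-\z_m)\;:=\;\bigl(\vphi_m^{-1}f\bigr)(q-\z_m)\;\in\;R[\z_m,t^{1/m}][\![q-\z_m]\!],
\]
meaning that $t$ is replaced by $t^{1/m}$ in the coefficients.

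The gluing identity in Definition~\ref{def.habR} then needs two inputs. The first is the identity $\vphi_p\circ\vphi_{pm}^{-1}=\vphi_m^{-1}$ on coefficients. The second is that $f$ is a single element whose expansions at $\z_m$ and $\z_{pm}$ are re-expansions of each other $p$-adically. That second point is exactly Habiro's argument: $(q;q)_k$ becomes $p$-adically small near $\z_{pm}-\z_m$. His argument applies verbatim over $R$, because it only uses the $\BZ$-coefficients of $(q;q)_k$.

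\textbf{Injectivity.} If every $f_m$ vanishes, then after applying the injective map $\vphi_m$ every Taylor expansion of $f$ at every root of unity vanishes. To conclude that $f$ is zero in the completion, I would invoke Habiro's injectivity $\calH\hookrightarrow\prod_m\BZ[\z_m][\![q-\z_m]\!]$ coefficientwise. This requires $R$ to be torsion-free, which holds for a localization of $\BZ[t^{\pm1}]$. Then $R[q]/(q;q)_n$ is a direct limit of free $\BZ$-modules tensored with $\BZ[q]/(q;q)_n$, and the statement reduces to the case $R=\BZ$.

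\textbf{Surjectivity.} Given $(f_m)_m\in\calH_R$, set $g_m:=\vphi_m(f_m)$. The twisted gluing then becomes untwisted $p$-adic gluing $g_m\leftrightarrow g_{pm}$ in $R_p$, so $(g_m)_m$ lies in the untwisted ring $\calH_{\BZ}\otimes R$ in the sense of~\eqref{habdefZ}. Habiro's surjectivity, extended over torsion-free $R$, produces some $f$ in the completion with these expansions.

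The main obstacle is the coefficient ring. A priori $f_m$ has coefficients in $R[t^{1/m}]$, and $g_m$ only in $\vphi_m(R[t^{1/m}])$. I must show that $g_m$ actually has coefficients in $R$, and in fact that $f_m$ has coefficients in $\vphi_m^{-1}(R)$. First I would use the gluing to $f_1\in R[\![q-1]\!]$ along chains $1\to p\to\dots\to m$. This shows that the constant term of $f_m$ in $q-\z_m$ is forced to be $\vphi_m^{-1}$ of the value of $f_1$ at the appropriate point, modulo $p$-adically small corrections. I would then argue by induction on the $(q-\z_m)$-adic order. Here the condition ``isomorphism onto its image without completion'' should be what makes $\vphi_p(R)\subset R$ a closed condition, so that it passes to $p$-adic limits.
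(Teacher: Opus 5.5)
Your strategy is the paper's. Its proof is the composite of two isomorphisms: first, the identification of $\varprojlim_n R[q]/(q;q)_nR[q]$ with the ring of untwisted, $p$-adically gluing expansions with coefficients in $R$ (Habiro's theorem with $R$-coefficients, which the paper also takes for granted); second, the untwisting $(f_m)_m\mapsto(\varphi_m f_m)_m$. Your forward map, injectivity and surjectivity steps are a reasonable unpacking of this.

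Your final paragraph, however, has the difficulty backwards. Since $\varphi_m(t^{1/m})=t$ and $\varphi_p(R)\subseteq R$, the map $\varphi_m$ sends $R[t^{1/m}]$ into $R$. So $g_m=\varphi_m(f_m)$ automatically has coefficients in $R[\z_m]$, and the induction along chains $1\to p\to\dots\to m$ and the closedness argument are unnecessary.

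The hypothesis really enters in your \emph{forward} map. Writing $f_m=\varphi_m^{-1}(g_m)$ requires $t\mapsto t^{1/m}$ to send $R$ into $R[t^{1/m}]$, i.e.\ that $\varphi_m\colon R[t^{1/m}]\to R$ be bijective. Bijectivity then persists after $p$-adic completion, and that is what converts untwisted gluing back into twisted gluing. You assumed this silently when you said $\varphi_m^{-1}$ is ``realized inside $R[t^{1/m}]$''.

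Injectivity of $\varphi_p\colon R\to R$ alone does not give it. For the localization $R=\BZ[t^{\pm1},(2-t^n)^{-1}\,:\,n\geq1]$ one has $\varphi_p(R)\subseteq R$ with $\varphi_p$ injective. Yet $1/(2-t^{1/m})\notin R[\z_m,t^{1/m}]$ for $m\geq2$, so the constant element $1/(2-t)$ has no image under your forward map; the direction that can fail is exactly the one you treated as unproblematic.

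You should state bijectivity of $\varphi_m\colon R[t^{1/m}]\to R$ as the precise reading of the hypothesis; it holds for $R=\BZ[t]$ and $R=\BZ[t^{\pm1}]$, the cases the paper actually uses. With that, your argument is complete.
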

\begin{proof}
This follows from the isomorphism
\be
\begin{small}
\begin{aligned}
  \varprojlim_n R[q]/(q;q)_nR[q]
  \cong\Big\{&(f_m)_m\;\in\;\prod_{m\geq 1}R[\z_m][\![q-\z_m]\!]
  \;\Big|\;\text{for all }m\in\BZ_{>0}\text{ and }p\in\BZ_{>0}\text{ prime}\\
  &f_m(q-\z_{pm}+\z_{pm}-\z_m)\=f_{pm}(q-\z_{pm})
  \in R_p[\z_{pm}][\![q-\z_{pm}]\!]\Big\}\,,
\end{aligned}
\end{small}
\ee
combined with the isomorphism $(f_m)_m\to (\varphi_m f_m)_m$, where
$\varphi_m=\prod_{p}\varphi_p^{v_p(m)}$.
\end{proof}

This shows that the Habiro ring of $\BZ[t]$, with Frobenius lifts $t\mapsto t^p$,
is described by the isomorphism
\be
  \varphi:\calH_{\BZ[t]}\to\varprojlim_{n}\BZ[t,q]/(q;q)_n\BZ[t,q]\,.
\ee
Therefore, using this isomorphism, we can easily construct all elements of
$\calH_{\BZ[t]}$ by taking
\be
  \varphi^{-1}\Big(
  \sum_{n=0}^\infty a_n(t;q)(q;q)_n\Big)\,,
\ee
for any sequence $a_{n}(t;q)\in\BZ[t,q]$ (noting of course that a choice of
$a_{n}(t;q)$ is not unique).

These rings satisfy similar properties to Habiro's original ring $\calH_\BZ$.
This is summarized in the following:

\begin{proposition}
\label{prop.hab11}
The canonical maps
\be
  \calH_{R/\BZ[t^{\pm 1}]}\to R[\![q-1]\!]\,,\qquad
  \calH_{R/\BZ[t^{\pm 1}]}\to \prod_{m=1}^{\infty}R[t^{1/m},\z_m]
\ee
are injections.
\end{proposition}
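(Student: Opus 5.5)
The plan is to deduce both injections from the gluing relations in Definition~\ref{def.habR}, together with the fact that $|\z_{pm}-\z_m|_p<1$ for a compatible choice of roots of unity. Throughout, let $(f_m)_m\in\calH_{R/\BZ[t^{\pm1}]}$. The key structural fact is that $R$ is an étale $\BZ[t^{\pm1}]$-algebra, here $R=\BZ[t^{\pm1},\Delta(t)^{-1}]$. Such an $R$ is a torsion-free, noetherian domain, and $R\to R_p$ is injective with $\bigcap_n p^nR=0$ by Krull's intersection theorem. The same holds after adjoining $t^{1/m}$ and $\z_m$.

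For the first map $\calH\to R[\![q-1]\!]$, suppose $f_1=0$. I will show $f_m=0$ for all $m$ by induction on the number of prime factors of $m$. Write $m=pm'$ with $f_{m'}=0$. The gluing relation gives
\[
\varphi_p f_{m}(x^p,q-\z_{m}) \= f_{m'}(x,q-\z_m+\z_m-\z_{m'}) \= 0
\]
in $R_p[t^{1/m},\z_m][\![q-\z_m]\!]$. Two things then need to be checked.

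\begin{itemize}
\item First, the Frobenius $\varphi_p$ ($t\mapsto t^p$, combined with $x\mapsto x^p$ on the $t^{1/m}$ part) is injective on $R_p[t^{1/m},\z_m]$. This is where I would use the proposition showing that $t\mapsto t^p$ is an automorphism of $\BZ[t^{1/p^\infty},\Delta^{-1}]_p$: the ring embeds into that perfected completion, on which $\varphi_p$ is injective.
\item Second, the map $R[t^{1/m},\z_m]\to R_p[t^{1/m},\z_m]$ is injective, by torsion-freeness and the Krull intersection theorem.
\end{itemize}

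Together these give $f_m=0$ coefficientwise in $q-\z_m$. One subtlety is that the gluing identity involves re-expanding $f_{m'}$ at a shifted point. That is legitimate only because $\z_m-\z_{m'}$ is topologically nilpotent $p$-adically, and it is harmless here since $f_{m'}=0$.

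For the second map, to $\prod_m R[t^{1/m},\z_m]$, the values $f_m(\z_m)$ are the constant terms. I would show that knowing all constant terms determines $f_1$, and then conclude by the first part. Fix $p$ and consider $m=p^r$. The gluing with $m'=1$, iterated, says that $f_1(q-1)$ evaluated $p$-adically at $q=\z_{p^r}$ equals $\varphi_p^r f_{p^r}(\z_{p^r})$. So if all constant terms vanish, then $f_1(\z_{p^r}-1)=0$ in $R_p[\z_{p^r}]$ for every $r$.

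The main obstacle is to deduce from this that $f_1=0$. The approach is to write $f_1=\sum_k c_k(q-1)^k$ and pass to the Habiro-type picture, where the elements $(q;q)_n$ play the role of a basis. Vanishing at all $p^r$-th roots of unity forces $f_1\in\bigcap_r \Phi_{p^r}$-adic ideals. Using Habiro's argument from~\cite{Habiro:completion}, this gives $c_k\in p^{N}R_p$ for all $N$ as $r\to\infty$, because the $\Phi_{p^r}(q)$ generate ideals whose product converges to $0$ $p$-adically in $R_p[\![q-1]\!]$. Hence $c_k=0$ in $R_p$, and therefore in $R$. Making this $p$-adic estimate uniform in $k$, in the presence of the twisted Frobenius, is the delicate step. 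I would first try reducing to Habiro's known statement for $\calH_\BZ$ by taking a $\BZ$-basis of $R$ (locally) and applying Habiro's injectivity coefficientwise.
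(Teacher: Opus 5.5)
Your argument for the first injection is the paper's: the paper only says it ``follows from the gluing relation'', and you make explicit the two facts this rests on, namely injectivity of $\varphi_p$ and injectivity of $R[t^{1/m},\z_m]\to R_p[t^{1/m},\z_m]$.

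For the second injection your route is correct in outline but genuinely different from the paper's.

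\textbf{The paper's route.} It uses every prime but only the first level. Suppose all $f_m(\z_m)$ vanish and $c_0=\dots=c_{k-1}=0$. Gluing $f_1$ with $f_p$ gives $f_1(\z_p-1)=0$ in $R_p[\z_p]$. Hence $c_k\in(\z_p-1)R_p[\z_p]\cap R=pR$ for \emph{every} prime $p$. A nonzero element of $\BZ[t^{\pm1},\Delta(t)^{-1}]$ has only finitely many prime divisors, so $c_k=0$.

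\textbf{Your route.} You fix one prime and use all levels $\z_{p^r}$. One prime with one level would only give $p\mid c_k$, so the extra levels are essential. This proves more: roots of unity of $p$-power order for a single $p$ already detect the element. The cost is that the real content sits in the step you attribute to ``Habiro's argument''. That step is the claim that $f_1(\z_{p^r}-1)=0$ for all $r\le M$ gives $f_1=h_M\,(q^{p^M}-1)/(q-1)$ with $h_M\in R_p[\![q-1]\!]$. You should justify it as follows:
\begin{itemize}
\item Use Weierstrass division in $R_p[\![T]\!]$, with $T=q-1$, by the Eisenstein polynomials $\Phi_{p^r}(1+T)$. This is legitimate because $R_p$ is $p$-adically complete.
\item $R_p[\z_{p^r}]$ is free over $R_p$ on the powers $(\z_{p^r}-1)^i$, $i<\phi(p^r)$. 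So a remainder of degree $<\phi(p^r)$ that vanishes at $\z_{p^r}-1$ is zero.
\item For $1\le j<r$, the element $\Phi_{p^j}(\z_{p^r})$ divides a power of $p$, so it is a non-zero-divisor. This is what lets you peel off one cyclotomic factor at a time.
\end{itemize}

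With that in place, the step you call delicate is harmless. Once $f_1(\z_{p^r}-1)=0$ is known, the Frobenius plays no further role. For each fixed $k$, the coefficient of $T^k$ in $h_M\,(q^{p^M}-1)/(q-1)$ is an $R_p$-combination of the $\binom{p^M}{i+1}$ with $i\le k$. It therefore lies in $p^{M-\lfloor\log_p(k+1)\rfloor}R_p$, so no uniformity in $k$ is needed. The product tends to $0$ coefficientwise, not uniformly $p$-adically, since its top coefficient is $1$.

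Your fallback would not work: splitting along a $\BZ$-basis of $R$ and applying Habiro's injectivity coefficientwise fails because the twist $t\mapsto t^p$ does not respect such a splitting, so the components are not elements of $\calH_\BZ$. It is also unnecessary.
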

That is, an element of the ring $\calH_{R/\BZ[t^{\pm 1}]}$ is uniquely determined
by either its expansion at $q$ near 1, or by its value at roots of unity.

\begin{proof}
The first injection follows from Equation~\eqref{gluefmp}.
The second follows from the first.
Indeed, if an element $f$ satisfies $f(\z_m)=0$ for all $m$, and
$f_1(t,q-1)=\sum_{k=0}^\infty c_k (q-1)^k$, we by assumption have that $c_0=0$.
Then by Equation~\eqref{gluefmp} we find that $c_1$ contains infinitely many prime factors and hence
$c_1=0$, then $c_k=0$ follows by the same reasoning and induction.
\end{proof}

We now recall a basic construction of elements in Habiro rings
from~\cite[Sec.1.5]{GW:explicit}. The main idea is to replace any factorials in the
expansions of a rational function by $q$-factorials.
This leads to elements in the Habiro ring
defined over the same ring of rational functions.
As a fundamental example, consider the rational function
\be
\label{fcalA}
f_{\calA,Q}(t)
\= \frac{1}{1- \sum_{\a \in \calA} (-1)^{Q_{\a\a}}t_\a}
\= \sum_{k \in \BZ_{\geq 0}^\calA}(-1)^{\diag(Q)k}
\frac{(\sum_\a k_\a)!}{\prod_\a (k_\a)!} \prod_\a t_\a^{k_\a}
\ee
in a finite set $\calA$ of variables $t_\a$, where $t=(t_\a)_{\a \in \calA}$,
$k=(k_\a)_{\a \in \calA}$ and $Q$ is an integral symetric matrix (which is
currently all redundant expect for the diagonal modulo two).
Replacing the
factorials with $q$-factorials and adding the quadratic from $Q$, we obtain
a $q$-deformation defined by
\be
\label{fcalAq}
f_{\calA,Q}(t,q) \=  \sum_{k \in \BZ_{\geq 0}^\calA}
(-1)^{\diag(Q)k}q^{\frac{1}{2}k^{t}Qk+\frac{1}{2}\diag(Q)k}
\frac{(q;q)_{\sum_\a k_a}}{\prod_\a (q;q)_{k_\a}} \prod_\a t_\a^{k_\a} \,.
\ee
\begin{theorem}
\label{thm.ct2}\cite{GW:explicit}
For every finite set $\calA$ and $Q$ integral symmetric matrix indexed by $\calA$,
we have $f_{\calA,Q}(t,q) \in \calH_{\BZ[t,f_{\calA,Q}(t)]/\BZ[t]}$.
\end{theorem}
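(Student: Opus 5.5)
We want to show that $f_{\calA,Q}(t,q)$ defines, for each $m$, a series $f_m(t,q-\z_m)$ with coefficients in $\BZ[t,f_{\calA,Q}(t)][t^{1/m},\z_m]$, and that these series glue $p$-adically after the Frobenius twist $t\mapsto t^p$. The construction of $f_m$ is local, the gluing is global. To build $f_m$ we write $q=\z_m e^{\hbar}$ (or expand in $\ve=q-\z_m$) and split every summation index as $k_\a=m\ell_\a+r_\a$ with $0\le r_\a<m$. The $q$-multinomial at $q=\z_m$ factors (the $q$-Lucas theorem) as the classical multinomial in the $\ell$'s times a $\z_m$-multinomial in the $r$'s. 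The sign and quadratic prefactor $(-1)^{\diag(Q)k}q^{\frac12 k^tQk+\frac12\diag(Q)k}$ becomes a root of unity depending only on $r$ modulo $m$, times $e^{\hbar(\cdots)}$. The leading term of $f_m$ is then a finite sum over $r$ of expressions
\[
\sum_{\ell}\binom{\sum\ell_\a}{\ell}\prod_\a (\pm t_\a^m)^{\ell_\a}\cdot(\text{root of unity})\prod_\a t_\a^{r_\a},
\]
and the classical sum is $f_{\calA,Q}(\pm t^m)$, i.e.\ $\vphi_m$ of the original rational function. This explains the Frobenius twist and the extra $t^{1/m}$: it is the inverse image under $\vphi_m$ that lies in $R[t^{1/m},\z_m]$.

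The higher-order terms in $\ve$ are handled by expanding each $(\z_m e^\hbar;\,\z_me^\hbar)_{m\ell+r}$ as the $\z_m$-value times $\exp$ of a power series in $\hbar$ whose coefficients are polynomials in $\ell$, using the Euler–Maclaurin/Bernoulli expansion of $\sum\log(1-q^j)$. Each coefficient of $\hbar^n$ is then a finite sum of $\sum_\ell P(\ell)\binom{|\ell|}{\ell}(\pm t^m)^\ell$ with $P$ polynomial. Applying $\th_\a=t_\a\partial_{t_\a}$ to $f_{\calA,Q}$ shows these are polynomials in $t$ and $f_{\calA,Q}(t^m)$, i.e.\ elements of $\vphi_m(R)$, since $\th_\a f=t_\a f^2$-type identities keep us in $\BZ[t,f]$. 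Integrality, rather than merely rationality, is the delicate point. I would control denominators by working with the expansion in $\ve=q-\z_m$ rather than $\hbar$, and by using that $\binom{|\ell|}{\ell}$ times a $q$-polynomial Taylor coefficient has integral divided-difference structure. This is the approach of \cite{GSWZ} for the $m=1$ case, applied termwise.

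The main obstacle is the gluing $f_m(q-\z_{pm}+\z_{pm}-\z_m)=\vphi_pf_{pm}$ in $R_p[\z_{pm}][\![q-\z_{pm}]\!]$. I would deduce it by proving that both sides are the $p$-adically convergent re-expansion of the \emph{same} formal object. To do this, consider the truncations $F_N(t,q)=\sum_{|k|<N}(\cdots)$, which are polynomials. Show that $f_m$ is the $p$-adic and $(q-\z_m)$-adic limit of $F_{mp^aN}$ expanded at $\z_m$. This amounts to showing that the tails $\sum_{|k|\ge Mp^a}$ become divisible by high powers of $p$ and of $(q-\z_m)$ uniformly. This is where $(q;q)_n$ divisibility by cyclotomic polynomials is used, and it parallels Habiro's proof that $\sum a_n(q;q)_n$ lies in $\calH$. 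Since polynomials obviously satisfy the gluing, passing to the limit gives the result. The key estimate I expect to be hardest is showing that the tail is small simultaneously near $\z_m$ and $\z_{pm}$ once $|\z_{pm}-\z_m|_p<1$ is taken into account. I would first attempt it for $|\calA|=1$, the geometric series $\sum q^{\binom{k}{2}}(-t)^k$, and then reduce the general case to this one via the multinomial factorization.
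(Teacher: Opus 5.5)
Your local construction of $f_m$ (splitting $k_\a=m\ell_\a+r_\a$, applying $q$-Lucas at $\z_m$, and turning the polynomial-in-$\ell$ corrections into $\th_\a$-derivatives of $f_{\calA,Q}$ at $\pm t^m$) is the mechanism the paper uses for Theorem~\ref{thm.1.def}; the statement itself is quoted from \cite{GW:explicit}. Your gluing step, however, rests on an estimate that is false. The $k$-tails are never small $p$-adically, nor $(q-\z_m)$-adically. Take $|\calA|=1$ and $Q=0$. Then $f_{\calA,Q}(t,q)=\sum_k t^k$ and $F_N=(1-t^N)/(1-t)$, so $f-F_N=t^N/(1-t)$, which is nonzero in $R/pR=\mathbb{F}_p[t,(1-t)^{-1}]$ for every $N$, including $N=mp^aM$. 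Your own test case $\sum_k q^{\binom{k}{2}}(-t)^k$ has tail $(-t)^N/(1+t)$ at $q=1$. In fact $1/(1-t)$ is not a $p$-adic limit of polynomials in $R_p$ at all, so no scheme that approximates $f_m$ inside $R_p[\z_m,t^{1/m}]$ by truncations can work. The analogy with Habiro's $\sum_n a_n(q;q)_n$ is misleading. There the tails are divisible by $(q;q)_N$; here they only carry a factor $t^{N}$, with no $q$-smallness (the terms with all $r_\a=0$ survive at $q=\z_m$).

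The missing idea is to work $t$-adically, where the truncations do converge. Every $t$-coefficient of $f_{\calA,Q}(t,q)$ lies in $\BZ[q^{\pm1}]$, and $D=1-\sum_\a(-1)^{Q_{\a\a}}t_\a$ has constant term $1$. This gives two things. First, integrality comes for free. Each $(q-\z_m)$-coefficient of $f_m$ lies in $\BZ[\z_m][\![t^{1/m}]\!]$, and by your rationality computation it equals $P/D^n$ with $P\in\BQ(\z_m)[t^{1/m}]$. Then $P=D^n\cdot(\text{coefficient})$ has coefficients in $\BZ[\z_m]$, so no divided-difference analysis is needed. Second, gluing follows:
\begin{enumerate}
\item Since $D$ is a nonzerodivisor modulo every $p^n$, there is an embedding $R_p[\z_{pm},t^{1/m}]\hookrightarrow\BZ_p[\z_{pm}][\![t^{1/m}]\!]$.
\item The re-expansion of $f_m$ at $\z_{pm}$ converges $p$-adically because $|\z_{pm}-\z_m|_p<1$; this is the only place the $p$-adic topology enters. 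It commutes with the continuous embedding above.
\item Coefficientwise in $t$, both this re-expansion and $\vphi_p f_{pm}$ are the Taylor expansion at $\z_{pm}$ of the same Laurent polynomial in $q$, namely a $t$-coefficient of $f_{\calA,Q}(t^{1/m},q)$. Hence they agree.
\end{enumerate}
This is exactly how the paper closes the proof of Theorem~\ref{thm.1.def}.
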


This basic example can be used to construct many new elements by specializing
various $t_\a=1$ or taking push-forwards.

\begin{example}
\label{ex.fun}
An example of the above theorem is $f(t,q)=(1-t)^{-1}\in\calH_\BZ[t,(1-t^{-1})]$,
since
\be
  f_m
  \=
  \frac{1}{1-t^{1/m}}
  \=
  \frac{1}{1-t}(1+t^{1/m}+t^{2/m}+\cdots+t^{(m-1)/m})
  \in
  \mathbb{Z}[t^{1/m},(1-t)^{-1}]\,.
\ee
\end{example}

Another class of examples, comes from the binomial sums originating from the
application of Lagrange's inversion theorem in Section~\ref{sub.lag}.
Indeed, letting $R$ be the algebra $\BZ[z^{\pm1},\delta^{-1},y^{\pm1}]$ modulo the
Equations~\eqref{eq:nahm}, we have the following:

\begin{theorem}
\label{thm.ct3}\cite{GW:explicit}
For any integer matrix $A$, $Q$ an integral quadratic form with diagonal
entries odd, $\nu$ is an integral linear form with odd entries, and any integral
$s_i$, the $q$-hypergeometric sum
\be
  \sum_{k\in\BZ_{\geq0}^N}
  (-1)^{k_1+\cdots+k_N}
  q^{\frac{1}{2}Q(k)+\frac{1}{2}\nu(k)}
  \prod_{i=1}^N
  \bigg[\!\!\begin{array}{c} s_i+(A^{t}k)_i\\k_i\end{array}\!\!\bigg]_{\!q}
  y_i^{k_i}\,,
\ee
defines an element of the Habiro ring $R/\BZ[y]$ with respect to the Frobenius
lifts $y\mapsto y^p$.
\end{theorem}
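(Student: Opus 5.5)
The statement to prove is Theorem~\ref{thm.ct3}: the $q$-hypergeometric binomial sum lies in the Habiro ring of $R=\BZ[z^{\pm1},\delta^{-1},y^{\pm1}]/(\eqref{eq:nahm})$ over $\BZ[y]$. I would construct the required collection $(f_m)_m$ explicitly, by expanding the sum at each root of unity, and then verify the Frobenius-twisted gluing. The route is that of~\cite{GW:explicit}, adapted to binomials, with Theorem~\ref{thm:lag.inv} supplying the rationality at each stage.

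\textbf{Step 1: the series at $q=1$.} Write $q=e^{\hbar}$. The summand is a product of ratios of $q$-Pochhammer symbols. I would use the standard quantum-dilogarithm asymptotics, writing $(q;q)_k$ as $(x;q)_\infty$-ratios with $x=q^k$, to turn the sum over $k$ into a formal Gaussian integral around the critical point. The critical point equations are exactly the Nahm equations~\eqref{eq:nahm}, with $z_i=q^{k_i}$-type variables.

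Formal Gaussian integration then produces a series $\delta^{-1/2}$-free expression. More precisely, Theorem~\ref{thm:lag.inv} shows that the $\hbar^0$ term equals $\delta^{-1}\prod z_i^{s_i}$, which lies in $R$. The higher coefficients are polynomials in $z^{\pm1}$, $(1-z)^{-1}$ and $\delta^{-1}$.

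The cleaner way to see integrality is to avoid integrals. I would expand $\bigl[\begin{smallmatrix} s+(A^tk)_i\\ k_i\end{smallmatrix}\bigr]_q$ and $q^{Q(k)/2}$ in powers of $(q-1)$ with coefficients that are polynomials in $k$ times the classical binomial. Each coefficient is then $\sum_k P(k)\prod\binom{s_i+(A^tk)_i}{k_i}y^k$, and this equals a differential operator $P(\theta)$ applied to the Lagrange-inversion sums of Theorem~\ref{thm:lag.inv}. That lies in $R$, because $\theta_j z_i\in R$ by implicit differentiation of~\eqref{eq:nahm}, whose Jacobian is $\delta$.

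\textbf{Step 2: the series at $\z_m$.} Put $q=\z_m e^{\hbar}$ and split $k=m\ell+\rho$ with $0\le\rho<m$. The $q$-binomial factorizes by the $q$-Lucas theorem: one factor is an ordinary binomial in $\ell$ at $q^m$, and the other is a $\z_m$-binomial in $\rho$, each corrected by an $\hbar$-series. The sum over $\ell$ is then the Step~1 construction applied with $y_i^m$ in place of $y_i$ and with shifted $s_i$, using the shift-freedom in Theorem~\ref{thm:lag.inv}. It lands in $R$ with $y^{m}$, i.e.\ in the $\varphi_m$-twist of $R$, after adjoining $y^{1/m}$ and $\z_m$. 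The finite sum over $\rho$ introduces only $\z_m$ and $y^{\rho}$. This gives $f_m\in R[\z_m,y^{1/m}][\![q-\z_m]\!]$; the parity hypotheses on $Q$ and $\nu$ make the signs and half-integer exponents combine integrally.

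\textbf{Step 3: gluing (the main obstacle).} I must show that $f_m$, re-expanded at $\z_{pm}$, equals $\varphi_p f_{pm}$ in the $p$-adically completed ring. My plan is to represent both sides as the same $p$-adically convergent resummation of the original $q$-hypergeometric sum. Truncating the sum at $k<p^Nm$ gives a polynomial, for which the identity is tautological. I would then show that the tails tend to zero $p$-adically, uniformly in each $(q-\z_{pm})$-coefficient.

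This uniformity requires $p$-adic estimates of the form $[\,\cdot\,]_q\equiv$ (Frobenius-twisted binomial) modulo $p$, for $q$ near $\z_{pm}$. These are Dwork/Lucas-type congruences, and the Frobenius $y\mapsto y^p$ on $R_p$ (which exists by the étale-ness, using $\delta$ invertible) is exactly what they produce. I expect this step to carry the real work. I would first try to reduce it to the case $\calA$ treated in Theorem~\ref{thm.ct2} via the Lagrange-inversion change of variables, and otherwise imitate the proof in~\cite{GW:explicit} directly.
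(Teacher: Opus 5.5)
Your Steps 1 and 2 match the construction the paper relies on. The statement itself is quoted from \cite{GW:explicit}, and the same argument is carried out in the proof of Theorem~\ref{thm.1.def}: split $k=m\ell+\rho$, factor the $q$-binomials near $\z_m$ as in Corollary~\ref{cor.binom}, and turn the polynomial-in-$\ell$ corrections into $\theta$-derivatives of the Lagrange-inversion sums of Theorem~\ref{thm:lag.inv} with shifted $s_i$.

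Your justification of integrality there is not sufficient. Implicit differentiation of~\eqref{eq:nahm} only shows that ordinary derivatives preserve $R$. The correction polynomials have rational coefficients: already $\left[\begin{smallmatrix}a\\ k\end{smallmatrix}\right]_q=\binom{a}{k}\bigl(1+\tfrac{k(a-k)}{2}(q-1)+\cdots\bigr)$. So you only land in $R\otimes\BQ[\z_m,y^{1/m}]$. The paper gets integrality by noting that the same coefficients also lie in $\BZ[\z_m][\![y^{1/m}]\!]$, and then intersecting. This holds because the coefficient of each power of $y$ is a Laurent polynomial in $q$ with integer coefficients; the parity hypotheses make $\tfrac12Q(k)+\tfrac12\nu(k)$ integral.

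The genuine gap is Step 3, which you leave open, and the mechanism you propose would not work. Truncations of the $y$-series approximate it only $y$-adically, never $p$-adically. For instance, $\frac{1}{1-y}-\sum_{k\le M}y^k=\frac{y^{M+1}}{1-y}$ is nonzero modulo $p$ in $\BZ[y,(1-y)^{-1}]_p$ for every $M$. So there is no uniform $p$-adic tail estimate of the kind you describe, and no Dwork-type congruence is needed.

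The missing observation is that the gluing is automatic coefficientwise in $y$. Each coefficient of $y^k$ is a Laurent polynomial $c_k(q)$. Its Taylor series at $\z_m$, re-expanded at $\z_{pm}$ (convergent since $|\z_{pm}-\z_m|_p<1$), is exactly its Taylor series at $\z_{pm}$. Hence both sides of the gluing identity have the same image in $p$-adic power series in $y^{1/m}$, namely the expansion at $q=\z_{pm}$ of the sum with $y$ replaced by $y^{1/m}$.

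You then transfer this to $R_p$ through the map to power series defined by the unique solution $z_i(y)\in\BZ[\![y]\!]$, $z_i(0)=1$, of~\eqref{eq:nahm}. This map intertwines $\varphi_p$ with $y\mapsto y^p$. Indeed, $z_i(y^p)$ solves the Frobenius-twisted Nahm system and is congruent to $z_i(y)^p$ modulo $p$. Since $\delta$ is invertible, Hensel uniqueness forces $z_i(y^p)$ to be the image of $\varphi_p(z_i)$. The transfer also uses injectivity of this map on $R_p$, which the paper takes for granted. This is exactly the one-line gluing argument at the end of the proof of Theorem~\ref{thm.1.def}, with the auxiliary variable $x$ playing the role of your $y$.
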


\begin{remark}
This example will be fundamentally different from $f_{A,B,\nu,\ve}$.
The main difference being the Frobenius action on the variable $t$.
This highlights the interaction between the $q$-Pochhammer symbols and Frobenius lifts.
\end{remark}

\subsection{Cyclotomic expansions}
\label{sub.syclo}

With the basics on Habiro rings and their elements recalled, we can now prove
Lemma~\ref{lem.unique}. First we need the following elementary.

\begin{proposition}
\label{prop:eval}
If $\Delta(1)=1$ and $R=\BZ[t^{\pm1},\Delta(t)^{-1}]$, then
$\mathrm{ev}_{t=1}:\calH_{R/\BZ[t^{\pm1}]}\to\calH_{\BZ}$.
\end{proposition}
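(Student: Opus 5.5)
The plan is to define $\mathrm{ev}_{t=1}$ componentwise on the defining collections of Definition~\ref{def.habR} and then check that the Frobenius-twisted gluing conditions turn into the untwisted gluing conditions~\eqref{gluefmp}. Given $f=(f_m)_m\in\calH_{R/\BZ[t^{\pm1}]}$ with $f_m\in R[\z_m,t^{1/m}][\![q-\z_m]\!]$, we send it to $(f_m|_{t^{1/m}=1})_m$.

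\textbf{Step 1: the evaluation is defined on each factor.} Since $\Delta(1)=1$, there is a ring map $R=\BZ[t^{\pm1},\Delta(t)^{-1}]\to\BZ$ with $t\mapsto1$. It extends to
\[
R[t^{1/m},\z_m]\to\BZ[\z_m],\qquad t^{1/m}\mapsto 1 .
\]
We must check this is compatible with the relations. The ring $R[t^{1/m}]$ is $R[u]/(u^m-t)$, and $u\mapsto1$ is consistent with $t\mapsto1$. Applied coefficientwise, this gives maps $R[\z_m,t^{1/m}][\![q-\z_m]\!]\to\BZ[\z_m][\![q-\z_m]\!]$.

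\textbf{Step 2: the $p$-adic completions.} We need the same evaluation on $R_p[\z_{pm},t^{1/pm}][\![q-\z_{pm}]\!]$, which is where the gluing identity lives. The map $R\to\BZ$ induces $R/p^nR\to\BZ/p^n$ for every $n$, hence $R_p\to\BZ_p$ on passing to the limit. It extends to the adjoined roots $t^{1/pm}$ and $\z_{pm}$ exactly as in Step 1.

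\textbf{Step 3: the gluing condition.} The condition in Definition~\ref{def.habR} reads
\[
f_m(t,\,q-\z_{pm}+\z_{pm}-\z_m)=\varphi_p f_{pm}(t^p,\,q-\z_{pm}),
\]
with $\varphi_p(t)=t^p$. The key observation is that evaluation at $t=1$ intertwines $\varphi_p$ with the identity, because $1^p=1$. Concretely, we check that the two composites $R_p\to\BZ_p$, namely $\mathrm{ev}_{t=1}\circ\varphi_p$ and $\mathrm{ev}_{t=1}$, agree. Both are continuous ring homomorphisms that send $t$ to $1$. They therefore agree on $\BZ[t^{\pm1}]$, hence on $\Delta^{-1}$ by uniqueness of inverses, hence on $R$, and by $p$-adic continuity on $R_p$. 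The same argument applies on $t^{1/pm}$, since $\varphi_p$ sends it to $t^{1/m}$ and both are evaluated to $1$.

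Applying the evaluation to both sides of the gluing identity therefore gives exactly~\eqref{gluefmp} for the family $(f_m(1,q-\z_m))_m$. So the image lies in $\calH_\BZ$, and the map is a ring homomorphism because it is one componentwise.

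\textbf{Main obstacle.} The delicate point is Step 2 together with the compatibility of $\varphi_p$ with evaluation on the completed ring. In particular we must justify that the formal shift $q-\z_m=(q-\z_{pm})+(\z_{pm}-\z_m)$ commutes with evaluation. This holds because the re-expansion converges $p$-adically, since $|\z_{pm}-\z_m|_p<1$, and the coefficientwise continuous map $R_p\to\BZ_p$ preserves that convergence.
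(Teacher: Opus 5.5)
Your proposal is correct and takes the same approach as the paper, whose proof is the one-line observation that the Frobenius $t\mapsto t^p$ commutes with $\mathrm{ev}_{t=1}$. Your Steps 1--2 and the density/continuity argument on $R_p$ just spell out the well-definedness details the paper leaves implicit.
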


\begin{proof}
This follows from the fact that the Frobenius $t\mapsto t^p$ commutes with
$\mathrm{ev}_{t=1}$.
\end{proof}

With this in hand we can complete the proof of the lemma. 
\begin{proof}[Proof of Lemma~\ref{lem.unique}]
Consider the operator
\be
\label{Top}
(Tf)(t,q) \= \frac{f(t,q)-f(1,q)}{t-1} \,.
\ee
Note that if $f_m(t,q-\z_m) \in \BZ[t^{\pm 1/m}, \z_m,
\tfrac{1}{\Delta(t)}][\![q-\z_m]\!]$, then $(f_m(t,q-\z_m)-f_m(1,q-\z_m))/(t-1)
\in \BZ[t^{\pm 1/m}, \z_m,
\tfrac{1}{\Delta(t)}][\![q-\z_m]\!]$. 

From Proposition~\ref{prop:eval} and Example~\ref{ex.fun}, it is easy to see that
if $f \in \calH_{\BZ[t^{\pm 1}, \Delta(t)^{-1}]/\BZ[t^{\pm 1}]}$,
then $Tf\in \calH_{\BZ[t^{\pm 1}, \Delta(t)^{-1},(t-1)^{-1}]/\BZ[t^{\pm 1}]}$.
Moreover, since $(Tf)_m\in\BZ[t^{\pm 1/m}, \z_m,
\tfrac{1}{\Delta(t)}][\![q-\z_m]\!]$, we see that in fact
$Tf \in \calH_{\BZ[t^{\pm 1}, \Delta(t)^{-1}]/\BZ[t^{\pm 1}]}$.

Hence, we have
\be
\label{ff1}
f(t,q) = a_0(q) + (t^{-1}-1) f_1(t,q)
\ee
with $a_0(q) = f(t,1) \in \calH_\BZ$ and $f_1(t,q) = -t (Tf)(t,q) \in 
\calH_{\BZ[t^{\pm 1}, \Delta(t)^{-1}]/\BZ[t^{\pm 1}]}$. It is easy to see that $f$
uniquely determines $a_0$ and $f_1$. Shifting $f_1(t,q)$ to $f_1(qt,q)$ and
iterating~\eqref{ff1}, it follows that
\be
\label{ff2}
f(t,q) = a_0(q) + (t^{-1}-1) (a_1(q) + (q t^{-1}-1) f_2(t,q)) 
\ee
for unique $a_0(q), a_1(q) \in \calH_\BZ$ and $f_2(t,q)\in
\calH_{\BZ[t^{\pm 1}, \Delta(t)^{-1}]/\BZ[t^{\pm 1}]}$. Continuing by induction, we
obtain an expansion of the form
\be
\label{ff3}
f(t,q) = \sum_{k=0}^\infty (t^{-1};q)_k a_k(q)  
\ee
with $a_k(q)$ uniquely determined by $f(t,q)$. This gives a well-defined injective
ring homomorphism
\be
\label{finj}
\calH_{\BZ[t^{\pm 1}, \Delta(t)^{-1}]/\BZ[t^{\pm 1}]} \hookrightarrow
\varprojlim_k \;\calH_\BZ[t^{\pm1}]/((t^{-1};q)_k) 
\ee
and concludes the first part of the lemma.

For the second part, observe that
\be
f(q^n,q) \= \sum_{k=0}^n (q^{-n};q)_k \, a_k(q), \qquad (n \geq 0) \,.
\ee
Hence if $f(q^n,q) \in \BZ[q^{\pm 1}]$, then it follows by induction on $n$
that $a_n(q) \in \tfrac{1}{(q;q)_n} \BZ[q^{\pm 1}] \cap \calH_\BZ$. But
\be
\tfrac{1}{(q;q)_n} \BZ[q^{\pm 1}] \cap \calH_\BZ \= \BZ[q^{\pm 1}] \,.
\ee
This concludes the proof of the lemma. 
\end{proof}


\section{Proofs}
\label{sec.proofs}

\subsection{Deformation of Theorem~\ref{thm.1}}

Recall the state-sum $f_{A,B,\nu,\ve}(t,q)$ from~\eqref{ABt} of the colored Jones
polynomial of a knot given by the closure of a braid $\beta$ in $r$ strands with $N$
crossings. We will consider
multivariable versions of these state-sums that, in addition to $t$ and $q$,
also depend on $x=(x_1,\dots,x_N)$:
\be
\label{ABxt}
\begin{aligned}
&f_{A,B,\nu,\ve}(x,t,q)\\
&=
t^{\frac{r-1}{2}}\!\!
\sum_{k\in\BZ_{\geq0}^N}
q^{\frac{1}{2}Q(k)-\nu(k)}
\prod_{i=1}^N(-\ve_iq^{\frac{-\ve_i-1}{4}}t^{-\frac{1}{2}})^{k_i}
\bigg[\!\!\begin{array}{c} (A^{t}k)_i \\
k_i\end{array}\!\!\bigg]_{q}\!\!
(q^{-(B^{t} k)_i}t;q^{-1})_{k_i}
t^{-\frac{\ve_i}{2} ((A^{t}k)_i + (B^{t}k)_i+1)} x_i^{k_i}.
\end{aligned}
\ee

Note that
\be
f_{A,B,\nu,\ve}(x,t,q) \in \BZ[t^{\pm 1}, q^{\pm 1}][\![x]\!], \qquad
f_{A,B,\nu,\ve}(x,t,1) \in \BZ[t^{\pm 1}][\![x]\!]
\ee
satisfy $f_{A,B,\nu,\ve}(0,t,q)=1$ and consequently $f_{A,B,\nu,\ve}(0,1,1)=1$.
Moreover, $f_{A,B,\nu,\ve}(1,t,q)=f_{A,B,\nu,\ve}(t,q)$.
Consider the ring
\be
R \= \BZ[z^{\pm1},t^{\pm1},x^{\pm1},\d^{-1}]/(1-z_1+x_1y_1
\prod_jz_j^{A_{1j}},\cdots,1-z_N+x_Ny_N\prod_jz_j^{A_{Nj}})\,,
\ee
with $\d$ as in Equation~\eqref{deltay} and
$y_i=(t^{\ve_i}-1)t^{s(i)d_i/2}$ (as in Lemma~\ref{lem.yt}). The ring $R_p$ has
Frobenius lifts $\varphi_p$ raising $x,t$ to their $p$-th powers.
For an integer $b$ and positive integer $m$, we define
\be
\label{bm}
\langle b\rangle_m\equiv b\pmod{m}, \qquad 0\leq\langle b\rangle_m<m \,.
\ee

\begin{theorem}
\label{thm.1.def}
The formula $f_{A,B,\nu,\ve}(x,t,q)$ defines an element of the Habiro ring $\calH_R$.
Moreover,
\be
\label{trhasse}
\begin{aligned}
\d f_{A,B,\nu,\ve}(x^{\frac{1}{m}},t^{\frac{1}{m}},\z_m)
&\;=\!\!\!\!\!\!
\sum_{n\in(\BZ/m\BZ)^N}
\z_m^{\frac{1}{2}Q(n)-\nu(n)}
\prod_{i=1}^N(-\ve_i\z_m^{\frac{-\ve_i-1}{4}}t^{-\frac{1}{2m}})^{n_i}
\bigg[\!\!\begin{array}{c} \langle(A^{t}n)_i\rangle_m \\
n_i\end{array}\!\!\bigg]_{\z_m}
z_i^{\lfloor\frac{(A^{t}n)_i-n_i}{m}\rfloor}\\
&\qquad\qquad\times(\z_m^{-(B^{t} n)_i}t^{\frac{1}{m}};\z_m^{-1})_{n_i}
t^{-\frac{\ve_i}{2m} ((A^{t}n)_i + (B^{t}n)_i+1)+\frac{r-1}{2m}}
x_i^{\frac{n_i}{m}}\!\!.
\end{aligned}
\ee
\end{theorem}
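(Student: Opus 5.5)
We follow the strategy of \cite{GW:explicit} for Theorem~\ref{thm.ct3}, adapted to carry the $q$-Pochhammer factors $(q^{-(B^tk)_i}t;q^{-1})_{k_i}$ and the Frobenius $t\mapsto t^p$. The first step is to produce, for every $m$, a candidate series $f_m(x,t,q-\z_m)\in R[\z_m,t^{1/m}][\![q-\z_m]\!]$. To do this we substitute $q=\z_m e^{\hbar}$ (or $q=\z_m+\epsilon$) in the summand and write each summation variable as $k_i=n_i+m\ell_i$ with $n\in(\BZ/m\BZ)^N$ represented in $[0,m)$ and $\ell_i\geq 0$. We then factor the $q$-binomials and $q$-Pochhammers via the $q$-Lucas-type identities at $\z_m$:
\be
\bigg[\!\!\begin{array}{c} a \\ b\end{array}\!\!\bigg]_{\z_m}
\=\bigg[\!\!\begin{array}{c} \langle a\rangle_m \\ \langle b\rangle_m\end{array}\!\!\bigg]_{\z_m}
\binom{\lfloor a/m\rfloor}{\lfloor b/m\rfloor},
\qquad
(\z_m^{c}t^{1/m};\z_m^{-1})_{m\ell+n}\=(1-t)^{\ell}(\z_m^{c}t^{1/m};\z_m^{-1})_{n} \,.
\ee
Together with their $\hbar$-deformations, which are handled by the standard asymptotic expansion of $(q;q)$-ratios, this turns the $\ell$-sum at each fixed order in $\hbar$ into a finite combination of derivatives of binomial sums of the form $\sum_\ell\prod_i\binom{s_i+(A^t\ell)_i}{\ell_i}Y_i^{\ell_i}$ with integer shifts $s_i$ depending on $n$. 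Here $Y_i=x_iy_i$ after accounting for the powers $t^{\ell(\dots)}$ and $(1-t^{\ve_i})^{\ell_i}$, using Lemma~\ref{lem.yt} to match the monomials. By Theorem~\ref{thm:lag.inv} each such sum equals $\d^{-1}\prod_i z_i^{s_i}$, with the $z_i$ being the Nahm solution for $Y$. Since $R$ is by definition the ring generated by these $z_i$ with $\d$ inverted, every coefficient lies in $R[\z_m,t^{1/m}]$. At order $\hbar^0$, the shift $s_i=\lfloor((A^tn)_i-n_i)/m\rfloor$ arises from $\lfloor (A^tk)_i/m\rfloor-\ell_i$, and this yields exactly formula~\eqref{trhasse}. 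The exponents of $x$ and $t$ are rescaled to $x^{1/m},t^{1/m}$ so that the result lands in the Frobenius-twisted normalization of Definition~\ref{def.habR}.

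The second step is to check the gluing conditions $f_m(x,q-\z_{pm}+\z_{pm}-\z_m)=\varphi_p f_{pm}(x^p,q-\z_{pm})$ in $R_p[\z_{pm},t^{1/pm}][\![q-\z_{pm}]\!]$. We plan to argue as in the plan of proof: both sides are expansions of the same $q$-hypergeometric series, so the identity holds formally once we justify re-expansion. The approach is to first treat $x$ as a formal variable. In $\BZ[t^{\pm1},q^{\pm1}][\![x]\!]$ both $f_m$ and $f_{pm}$ are, coefficientwise in $x$, the Taylor expansions of the same polynomials in $q$ at $\z_m$ and $\z_{pm}$. For polynomials, the gluing identity is immediate from $|\z_{pm}-\z_m|_p<1$. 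It then remains to show that the rational-in-$z$ expressions obtained in the first step agree with these $x$-adic series after $p$-adic completion. That is, we must show that $\varphi_p$ applied to the Nahm solution at $x^p,t^p$ is compatible with the inclusion $R\to R_p$. This holds because the Nahm solutions are unique power series in $x$ (Theorem~\ref{thm:lag.inv}), so an identity that holds $x$-adically and involves only elements of $R_p$ holds in $R_p$. The fact that $f_{A,B,\nu,\ve}(x,t,q)$ lies in $\calH_R$ then follows, and Proposition~\ref{prop.hab11} shows that the family $(f_m)$ is determined by $f_1$.

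The main obstacle is the gluing step. The $p$-adic convergence of $f_m$ re-expanded at $\z_{pm}$ is not automatic coefficientwise in $x$, since it requires uniform $p$-adic bounds on the $(q-\z_{pm})$-coefficients as rational functions in $R$. The Frobenius twist $t\mapsto t^p$ must also interact correctly with the Pochhammer factors via $(t^{1/pm};\z_{pm}^{-1})_{pm}=1-t^{1/m}$-type identities. We would first try to reduce this to Theorem~\ref{thm.ct3} and Theorem~\ref{thm.ct2} of \cite{GW:explicit} by expanding each $(q^{-(B^tk)_i}t;q^{-1})_{k_i}$ as a $q$-binomial sum in $t$. Such an expansion rewrites $f_{A,B,\nu,\ve}$ as a push-forward of a larger Nahm-type sum whose Habiro property is already known. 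If that fails, we would reproduce their integrality argument directly: bound the denominators of the $\hbar$-coefficients by powers of $\d$ and by factorials that are controlled by $p$-adic valuations.
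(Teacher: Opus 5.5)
Your overall route is the paper's. You split $k=n+m\ell$, use the $q$-Lucas factorizations at $\z_m$, expand the corrections in $q-\z_m$, and evaluate the $\ell$-sums by Theorem~\ref{thm:lag.inv} as derivatives of $\d^{-1}\prod_i z_i^{s_i}$, with the order-zero term giving \eqref{trhasse}. You then glue via the $x$-adic expansion, whose coefficients are Laurent polynomials in $q,t$.

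\textbf{The gap.} In your first step you conclude that every $(q-\z_m)$-coefficient lies in $R[\z_m,t^{1/m}]$ ``since $R$ is generated by the $z_i$ with $\d$ inverted''. That does not follow, for two reasons.
\begin{itemize}
\item The higher-order corrections carry Bernoulli numbers and polynomials in $\BQ[\ell]$. Lagrange inversion plus differentiation therefore only places the coefficients in $R\otimes\BQ[x^{1/m},t^{1/m},\z_m]$.
\item The factors $(q^{-c}t;q^{-1})_{k}$ are not $(q;q)$-ratios. Their expansion involves $\Li_{2-j}(t)$, i.e.\ terms of the form $P_j(\ell)(1-t)^{\ell-j}$. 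Since $1-t$ is not a unit in $R$, summing these naively produces poles at $t=1$.
\end{itemize}

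\textbf{How the paper closes both points.}
\begin{itemize}
\item Poles at $t=1$ (Lemma~\ref{lem:pochhammers}): because $(q^at;q)_k$ is a polynomial in $t$, the coefficient of $(1-t)^{k-j}$ vanishes for $k=0,\dots,j-1$. It is therefore divisible by $k(k-1)\cdots(k-j+1)$, so each correction is $\partial_t^j$ applied to $(1-t)^k$. After summing, it becomes a derivative of $z^s/\d$, which stays in $R$.
\item Rational denominators: since $f_{A,B,\nu,\ve}(x,t,q)\in\BZ[t^{\pm1},q^{\pm1}][\![x]\!]$, every $(q-\z_m)$-coefficient also lies in $\BZ[\z_m,t^{\pm1/m}][\![x^{1/m}]\!]$. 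The intersection of this with $R\otimes\BQ[x^{1/m},t^{1/m},\z_m]$ is contained in $R[x^{1/m},t^{1/m},\z_m]$. This is the same $x$-adic observation you invoke for gluing; you need it already in step one.
\end{itemize}

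\textbf{On gluing.} Your gluing argument is exactly the paper's, and the ``main obstacle'' is overstated. The re-expansion of $f_m$ at $\z_{pm}$ converges automatically because $\z_{pm}-\z_m$ is topologically nilpotent in $\BZ_p[\z_{pm}]$. No reduction to Theorems~\ref{thm.ct2}--\ref{thm.ct3} and no $p$-adic valuation bounds are needed.

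\textbf{Minor slip.} The shift produced by $q$-Lucas is $\lfloor(A^tk)_i/m\rfloor-(A^t\ell)_i=\lfloor(A^tn)_i/m\rfloor$, not $\lfloor(A^tk)_i/m\rfloor-\ell_i$. It agrees with the exponent $\lfloor((A^tn)_i-n_i)/m\rfloor$ in \eqref{trhasse} wherever $\binom{\langle(A^tn)_i\rangle_m}{n_i}_{\z_m}\neq0$.
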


Over the next sections, we will prove this theorem. However, before that, we will
give a lemma needed for the main theorem~\ref{thm.1}.

\begin{lemma}
\label{lem:def2A}
If $A,B,\ve$ come from the braid presentation of a knot $K$ (as described in
Section~\ref{sub.state}), then the evaluation map $x=1$ induces a ring map
\be
  \mathrm{ev}_{x=1}:R\to\BZ[t^{\pm 1}, \Delta_K(t)^{-1}]\,.
\ee
\end{lemma}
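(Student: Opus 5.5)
To define $\mathrm{ev}_{x=1}$ on $R$, I will send $x_i\mapsto 1$, $t\mapsto t$ and $z_i\mapsto t^{\ve_i}$. I then have to check three things: the relations defining $R$ are respected, the inverted elements $z_i$, $t$, $x_i$ map to units, and $\d$ maps to a unit of $\BZ[t^{\pm1},\Delta_K(t)^{-1}]$. Since $R$ is a quotient of a localized Laurent polynomial ring, these checks suffice by the universal property of localization and of quotients. The elements $t^{\pm1}$, $x_i\mapsto 1$ and $z_i\mapsto t^{\ve_i}$ are units in the target, so the second check is immediate.

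\emph{Relations.} With $x_i=1$, the $i$-th relation becomes $1-z_i+y_i\prod_j z_j^{A_{ij}}=0$ with $y_i=(t^{\ve_i}-1)t^{s(i)d_i/2}$. This is exactly the Nahm system~\eqref{eq:nahm} under the specialization~\eqref{yt}. By Lemma~\ref{lem.zt}, $z_i=t^{\ve_i}$ solves it, so each relation maps to $0$ in $\BZ[t^{\pm1}]$.

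One point needs checking: $d_i$ may be odd, so $y_i$ could involve $t^{1/2}$. I will check from the definition of $d_i$ and the monodromy argument in the proof of Lemma~\ref{lem.zt} that $s(i)d_i/2$ is an integer. If it is not, I will adjoin $t^{1/2}$ harmlessly, or note that $R$ was built over $\BZ[t^{\pm1/2}]$, as in~\eqref{ABt}.

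\emph{Invertibility of $\d$.} This is the main step. By Theorem~\ref{thm:lag.inv} with all $s_i=0$, we have $\d(y)\,f_A(y)=1$ as power series in $y$, where $\d(y)$ is the determinant~\eqref{deltay} evaluated at the power-series solution $z(y)$. Specializing $y$ as in~\eqref{yt} sends $z(y)$ to $t^{\ve_i}$, because the solution of the Nahm system in $y$-adic power series is unique and the specialized $y_i$ lie in $(t-1)\BZ[t^{\pm1/2}]$. So the specialization is legitimate in $\BZ[\![t-1]\!]$. Lemma~\ref{lem.yt} gives $f_A(y)|_{\eqref{yt}}=f_{A,B,\nu,\ve}(t,1)$, which equals $1/\Delta_K(t)$ by~\eqref{fAB1}. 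Hence
\[
\d\big|_{x=1,\,z_i=t^{\ve_i}}=\det\bigl(I+\diag(1-t^{\ve_i})\,A\,\diag(t^{-\ve_i})\bigr)
\]
is a Laurent polynomial in $t$ that equals $\Delta_K(t)$ in $\BZ[\![t-1]\!]$. Since $\BZ[t^{\pm1}]\to\BZ[\![t-1]\!]$ is injective, it equals $\Delta_K(t)$ exactly. In particular it is a unit in $\BZ[t^{\pm1},\Delta_K(t)^{-1}]$.

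The delicate part is justifying the substitution of the formal identity at the algebraic point $z=t^{\ve}$. To handle it, I will compare the two evaluations of $\det(I+\diag(1-z)A\diag(z^{-1}))$, one along $z(y)$ and one at $z=t^{\ve}$, through the common ring $\BZ[\![t-1]\!]$. There convergence holds because $y_i\equiv 0 \pmod{t-1}$, and uniqueness of the solution $z\equiv 1$ identifies the two.
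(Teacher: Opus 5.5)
Your proposal is correct and follows the paper's own argument. Lemma~\ref{lem.zt} gives the map $z_i\mapsto t^{\ve_i}$, $x_i\mapsto 1$ compatibly with the relations, and $\d^{-1}=f_A(y)$ (Theorem~\ref{thm:lag.inv} with $s=0$), combined with Lemma~\ref{lem.yt} and \eqref{fAB1}, identifies the image of $\d$; your $(t-1)$-adic uniqueness argument for the specialized Nahm system supplies the justification that the paper's two-line proof leaves implicit.

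One correction, which the paper's proof shares: the image of $\d$ is $\det(I-\fb^\trim_r(\beta))=t^{(r-1-\text{writhe})/2}\Delta_K(t)$, not $\Delta_K(t)$ exactly, because Lemma~\ref{lem.yt} as printed drops this power of $t$ (in Example~\ref{ex.41} one gets $-1+3t-t^2=t\,\Delta_{4_1}(t)$). Since this differs from $\Delta_K(t)$ by a unit of $\BZ[t^{\pm1}]$, the conclusion of the lemma is unaffected.
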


\begin{proof}
Firstly, from Lemma~\ref{lem.zt}, we see that we obtain a map from $R$ to
$\BZ[t^{\pm 1}, \d^{-1}]$.
Then, since $\d^{-1}=f_{A,B,\nu,\ve}(0,t,1)$, it follows from Equation~\eqref{fAB1}
that $\d=\Delta_K(t)$.
\end{proof}

\subsection{Basics on the expansion of the $q$-Pochhammer symbol near
  roots of unity}
\label{sub.qp}

In this section, we review some basic properties of $q$-Pochhammer symbols and
their expansions near roots of unity. To begin, the elementary identity
\be
\label{qpe1}
(t;q)_k
\=
\frac{(t;q)_\infty}{(q^k t;q)_\infty} \qquad (k \in \BZ_{\geq 0})
\ee
allows us to deduce expansions of a finite $q$-Pochhammer symbol from the
corresponding ones of the infinite $q$-Pochhammer symbol.

This is complemented by the elementary but useful congruence product identity, which determines the behavior near $q=\z_m$ by that at $q=1$,
\be
\label{qpe2}
(t;q)_\infty \= \prod_{j=0}^{m-1} (q^jt;q^m)_\infty \qquad (m \in \BZ_{>0}) \,.
\ee

Next, we recall the asymptotic expansions near $1$. This is standard, and a
detailed proof can be found e.g., in~\cite[Thm.2.2]{Ban}.

\begin{lemma}
\label{lem.asy}
For $a \in \BZ$ and $q=e^\hbar$ with $\hbar \to 0$, we have:
\be
\label{qaxinf}
\begin{aligned}
(q^at;q)_\infty
& \sim
\exp\Big(
\sum_{\ell=0}^\infty
\frac{B_\ell(a)}{\ell!}\Li_{2-\ell}(t)\hbar^{\ell-1}
\Big) \,.
\end{aligned}
\ee
\end{lemma}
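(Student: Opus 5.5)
The plan is to take logarithms and reduce the statement to the generating function of the Bernoulli polynomials. For $|x|<1$ and $|q|<1$ (or formally in $x$) we have the standard expansion
\[
\log (x;q)_\infty \= -\sum_{k\geq 1}\frac{x^k}{k(1-q^k)} \,,
\]
obtained by expanding $\log(1-xq^j)$ and summing the geometric series in $j$. We will apply this with $x=q^a t$ and $q=e^\hbar$, which gives
\[
\log (q^at;q)_\infty \= -\sum_{k\geq 1}\frac{t^k}{k}\,\frac{e^{ak\hbar}}{1-e^{k\hbar}} \,.
\]

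The key step is the identity $\frac{u\,e^{au}}{e^u-1}=\sum_{\ell\geq 0}B_\ell(a)\frac{u^\ell}{\ell!}$, applied with $u=k\hbar$. It yields
\[
\frac{e^{ak\hbar}}{1-e^{k\hbar}}\=-\frac{1}{k\hbar}\sum_{\ell\geq0}\frac{B_\ell(a)}{\ell!}(k\hbar)^\ell \,.
\]
Substituting and exchanging the order of summation gives
\[
\log(q^at;q)_\infty \= \sum_{\ell\geq0}\frac{B_\ell(a)}{\ell!}\hbar^{\ell-1}\sum_{k\geq1}k^{\ell-2}t^k\=\sum_{\ell\geq0}\frac{B_\ell(a)}{\ell!}\Li_{2-\ell}(t)\hbar^{\ell-1} \,,
\]
which is exactly~\eqref{qaxinf}. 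As a consistency check, the $\ell=0$ term is the familiar $\Li_2(t)/\hbar$. The $\ell=1$ term is $(a-\tfrac12)\Li_1(t)=-(a-\tfrac12)\log(1-t)$, which matches the expected $(1-t)^{1/2-a}$ prefactor.

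The only real issue is justifying the interchange, since the resulting series in $\hbar$ is divergent and only asymptotic. I would handle it at two levels.

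First, formally: the coefficient of each $t^k$ is an honest convergent power series in $\hbar$ for $|\hbar|<2\pi/k$. Hence the identity holds exactly in $\BQ[a][\![t]\!](\!(\hbar)\!)$ coefficientwise in $t$. The $\Li_{2-\ell}(t)$ for $\ell\geq2$ are rational functions of $t$ with poles only at $t=1$, so each coefficient of $\hbar^{\ell-1}$ lies in $\BQ[a][t,(1-t)^{-1}]$. This formal version is what is used later for the expansions at $q=1$ and, via~\eqref{qpe1} and~\eqref{qpe2}, at roots of unity.

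Second, analytically: for fixed $|t|<1$ and $\hbar\to0$ in a sector with $\mathrm{Re}\,\hbar<0$, I would obtain the asymptotic statement via a Mellin transform, or via Euler--Maclaurin with Bernoulli remainder. Truncating the Bernoulli expansion at order $L$ leaves a remainder bounded by $C_L|k\hbar|^{L}$, uniformly for $|k\hbar|$ in a suitable range. The tail in $k$ is controlled by $|t|^k$. Together these give an error $O(\hbar^{L-1})$.

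This uniformity estimate is the main obstacle. It is standard, and I would cite~\cite[Thm.2.2]{Ban} for the details rather than redo them.
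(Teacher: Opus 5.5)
Your proposal is correct and is essentially the paper's approach. The paper gives no argument beyond citing \cite[Thm.2.2]{Ban}, and your computation (writing $\log(q^at;q)_\infty=-\sum_{k\geq1}\frac{t^k}{k}\frac{e^{ak\hbar}}{1-e^{k\hbar}}$, inserting the Bernoulli generating function, and deferring the uniform remainder estimate to the same reference) is the standard derivation behind that citation; your observation that the identity already holds exactly in $\hbar^{-1}\BQ[a][\![t,\hbar]\!]$, which is what the proof of Lemma~\ref{lem:pochhammers} actually uses, is a useful addition.
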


To understand the expansion of $f_{A,B,\nu,\ve}(x,t,q)$ for $q$ near roots of unity,
we use the following:

\begin{lemma}
\label{lem:pochhammers}
For fixed $m\in\BZ_{>0}$ and $n\in\{0,1,\dots,m-1\}$, $k\in\BZ_{\geq0}$, and
$a,b\in\BZ$, there exists $D_{m,n,b}(\z_m,k,a,t,\partial_t,q-\z_m)\in
\BQ[\z_m,k,a,t^{\frac{1}{m}},\partial_t][\![q-\z_m]\!]$ such that
\be
(q^{am+b}t^{\frac{1}{m}};q)_{km+n}
\=
D_{m,n,b}(\z_m,k,a,t,\partial_t,q-\z_m)
(1-t)^k
\ee
where the coefficient of $(q-\z_m)^\ell$ in $D_{m,n,b}$ has $\partial_t$ degree
at most $\ell$ and $t$-degree strictly less than $1$ and
\be
  D_{m,n,b}(\z_m,k,a,t,\partial_t,0)
  \=
  (\z_m^{b}t^{\frac{1}{m}};\z_m)_n\,.
\ee
\end{lemma}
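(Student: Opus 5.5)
The approach is to split off the ``full periods'' of the Pochhammer symbol, evaluate them at $q=\z_m$ using the congruence product identity~\eqref{qpe2}, and then show that the $q-\z_m$ corrections can be written as a differential operator in $t$ acting on $(1-t)^k$.

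First I split
\[
(q^{am+b}t^{\frac1m};q)_{km+n}\=(q^{am+b}t^{\frac1m};q)_{km}\,(q^{(a+k)m+b}t^{\frac1m};q)_{n}\,.
\]
Write $q=\z_m e^{h}$, so that $q^m=e^{mh}$. The second factor is a power series in $h$. Each coefficient is polynomial in $a,k$ and in $t^{1/m}$, because the only dependence on $a$ and $k$ is through $q^{(a+k)m}=e^{(a+k)mh}$. At $h=0$ this factor equals $(\z_m^b t^{1/m};\z_m)_n$, which is the claimed value of $D_{m,n,b}$ at $q=\z_m$. The first factor I regroup by blocks of $m$ consecutive terms (equivalently, by~\eqref{qpe2}):
\[
(q^{am+b}t^{\frac1m};q)_{km}\=\prod_{l=0}^{k-1}G_h\big(t\,e^{m^2(a+l)h}\big),\qquad
G_h(u)\=\prod_{j=0}^{m-1}\big(1-q^{b+j}u^{\frac1m}\big)\,.
\]
This uses $q^{(a+l)m}t^{1/m}=(t\,q^{(a+l)m^2})^{1/m}$ and $q^{m^2}=e^{m^2h}$. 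At $h=0$ we get $G_0(u)=\prod_j(1-\z_m^{b+j}u^{1/m})=1-u$, so each block contributes $1-t$ at leading order. Hence
\[
G_h(te^{ch})\=(1-t)+\sum_{\ell\ge1}g_\ell(c,t^{\frac1m})h^\ell\,,
\]
where each $g_\ell$ is polynomial in $c$ and $t^{1/m}$, with coefficients in $\BQ[\z_m]$.

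Next I expand the product over $l$ in powers of $h$. A monomial of $h$-degree $\ell$ selects a set $S\subset\{0,\dots,k-1\}$ of $|S|=j\le\ell$ distinct blocks that deviate from $1-t$. It equals $(1-t)^{k-j}$ times $\prod_{l\in S}g_{\ell_l}(m^2(a+l),t^{1/m})$. Summing over all $j$-subsets $S$ gives a polynomial in $a$ and $k$, by power-sum (Faulhaber) identities. The key point is that this polynomial vanishes for $k=0,1,\dots,j-1$, since then there are no $j$-subsets. It is therefore divisible by the falling factorial $k(k-1)\cdots(k-j+1)$. Combined with $(1-t)^{k-j}\,k(k-1)\cdots(k-j+1)=(-\partial_t)^j(1-t)^k$, this expresses the coefficient of $h^\ell$ as an operator of $\partial_t$-degree $\le j\le\ell$ applied to $(1-t)^k$. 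The operator's coefficients are polynomial in $k$, $a$ and $t^{1/m}$. Multiplying by the (operator-free) expansion of the second factor, and re-expanding $h=\log(q/\z_m)$ as a power series in $q-\z_m$ over $\BQ[\z_m]$, yields $D_{m,n,b}$ with the required $\partial_t$-degree bound. The $h^0$ term is exactly $(\z_m^bt^{1/m};\z_m)_n$.

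The remaining point, which I expect to be the main obstacle, is the normalization that the $t$-degree of each coefficient is strictly less than $1$. The $g_\ell$ may contain $t^{i/m}$ with $i\ge m$, i.e.\ full powers of $t$. I would remove these by writing $t=1-(1-t)$ and absorbing each factor $(1-t)$ back into $(1-t)^k$: the identity $(1-t)\,P(k,\partial_t)(1-t)^k=\tilde P(k,\partial_t)(1-t)^{k}$ holds with $\tilde P$ obtained by commuting $(1-t)$ past $\partial_t$, via $(1-t)\partial_t^j(1-t)^k=-\tfrac{k-j}{k+1}\partial_t^{j}(1-t)^{k+1}$-type relations, rewritten in terms of $(1-t)^k$. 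One must check that this reduction preserves polynomiality in $k$ and does not raise the $\partial_t$-degree above $\ell$. I would first try to organize the reduction at the level of $G_h$ itself, by dividing $G_h(u)-(1-u)$ by $1-u$ in the variable $u^{1/m}$ and keeping the remainder of degree $<m$. Only if that fails would I do the bookkeeping on the full expanded operator.
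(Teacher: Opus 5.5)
Your argument is correct and takes a genuinely different, more elementary route than the paper.

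\textbf{The paper's route.} It first treats $m=1$ analytically. It writes $(q^at;q)_k=(q^at;q)_\infty/(q^{k+a}t;q)_\infty$ and uses the Bernoulli--polylogarithm asymptotics of Lemma~\ref{lem.asy} to get $(1-t)^k\sum_{j\le \ell}P_{\ell,j}(a,k)(1-t)^{-j}(q-1)^\ell$ with $P_{\ell,j}$ polynomial. Divisibility of $P_{\ell,j}$ by $k(k-1)\cdots(k-j+1)$ is then forced by the absence of a pole at $t=1$ for every integer $k\ge0$. General $m$ is reduced to this case via~\eqref{qpe2}, splitting into $m$ Pochhammer symbols in base $q^m$ with $a$ shifted by $(b+j)/m$ and $t$ replaced by $\z_m^{b+j}t^{1/m}$.

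\textbf{Your route.} You factor into $k$ full blocks, each equal to $1-t$ at $q=\z_m$, and expand over subsets of deviating blocks. Polynomiality in $k$ comes from iterated Faulhaber sums. The falling factorial appears structurally, because the sum over $j$-subsets is empty for $k<j$. This needs no asymptotics at all. The paper's route, in exchange, yields explicit Bernoulli/polylogarithm coefficients in the same framework it reuses for Lemma~\ref{lem.poch.t1}.

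\textbf{The normalization step.} The step you flagged as the main obstacle does close, but the check you listed is not the real issue. For $j\ge1$ the rule is
\[
(1-t)\,\partial_t^j(1-t)^k=-(k-j+1)\,\partial_t^{j-1}(1-t)^k ,
\]
which automatically preserves polynomiality in $k$ and lowers the $\partial_t$-degree. The only danger is having to absorb more than $j$ factors of $1-t$ into a term carrying $(1-t)^{k-j}$. For instance, a term $t\,(1-t)^k$ with no derivative could not be normalized.

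A degree count rules this out, and you should include it; the paper also asserts the corresponding normalization without spelling it out. In the $h^\ell$-coefficient, the $j$-subset contribution is $k(k-1)\cdots(k-j+1)\,(1-t)^{k-j}\,p_j$. Here $p_j$ has $t^{1/m}$-degree at most $jm+n<(j+1)m$, since each $g$ contributes at most $m$ and the second factor at most $n\le m-1$. Hence $p_j=\sum_{i=0}^{j}c_i\,(1-t)^i$ with $\deg_{t^{1/m}}c_i<m$, and
\[
k(k-1)\cdots(k-j+1)\,(1-t)^{k-j+i}=(k-j+i)(k-j+i-1)\cdots(k-j+1)\,(-\partial_t)^{j-i}(1-t)^k .
\]
This is polynomial in $k$ with $\partial_t$-degree $j-i\le\ell$.

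Your fallback of reducing only at the level of $G_h$ does not finish the job by itself for $m\ge2$. A product of several remainders of $t^{1/m}$-degree $<m$ can again have degree $\ge m$, so the global count above is what is actually needed.
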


\begin{proof}
Equation~\eqref{qpe1} and~\eqref{qaxinf} imply that
\be\label{eq:Ppol}
\begin{aligned}
(q^at;q)_k
&\= \frac{(q^at;q)_\infty}{(q^{k+a}t;q)_\infty}
\\
&\=(1-t)^k
\exp\Big(
\sum_{\ell=2}^\infty
(B_\ell(a)-B_\ell(k+a))\frac{\Li_{2-\ell}(t)}{\ell!}\hbar^{\ell-1}
\Big)\\
&\=
(1-t)^k
\sum_{\ell=0}^\infty
\sum_{j=0}^{\ell}
P_{\ell,j}(a,k)
(1-t)^{-j}
(q-1)^{\ell}\,,
\end{aligned}
\ee
for some polynomials $P_{\ell,j}(a,k)\in\BQ[a,k]$, since the $\ell=0$ term
in~\eqref{qaxinf} cancels and the $\ell=1$ term with
$B_1(a)=-\frac{1}{2}+a$ and $\Li_1(t)=-\log(1-t)$ gives the $(1-t)^k$ prefactor,
$(1-t)^{\ell-1} \Li_{2-\ell}(t) \in \BZ[t]$ for $\ell \geq 2$,
and the $t$-degree of $(q^at;q)_k$ is $k$. 
Given that $(q^at;q)_k\in\BZ[t,q]$ for all $k$, we see that $P_{\ell,j}(a,k)=0$
for $k\in\BZ_{\geq0}$ with $j>k$.
Therefore, we see that
$Q_{\ell,j}(a,k)=P_{\ell,j}(a,k)/k(k-1)\cdots(k-j+1)\in\BQ[a,k]$.
This implies that
\be
(q^at;q)_k
\=
\sum_{\ell=0}^\infty
\sum_{j=0}^{\ell}
Q_{\ell,j}(a,k)
\partial_t^j
(1-t)^k
(q-1)^{\ell}
\ee
with $Q_{\ell,j}(a,k) \in \BQ[a,k]$. 
Therefore,
\be
D_{1,0,0}(1,k,a,t,\partial_t,q-1)
\=\sum_{\ell=0}^\infty\sum_{j=0}^{\ell}Q_{\ell,j}(a,k)\partial_t^j(q-1)^{\ell}
\ee
gives the proof of the lemma when $m=1$.

For the case of general $m>0$, we use Equations~\eqref{qpe2} and~\eqref{eq:Ppol} to
find that there is some $P_{\ell,j,m,b,n}(a,k,t)\in\BQ[\z_m,a,k,t^{\frac{1}{m}}]$
with $t$-degree strictly less than $1$ such that
\be
\begin{aligned}
&(q^{am+b}t^{\frac{1}{m}};q)_{km+n}
\= \prod_{j=0}^{m-1} (q^{am+b+j}t^{\frac{1}{m}};q^m)_{k+\lceil
\frac{n-j}{m}\rceil} \\
&\=
\prod_{j=0}^{m-1}
(1-\z_m^{b+j}t^{\frac{1}{m}})^k
\sum_{\ell=0}^\infty
\sum_{j=0}^{\ell}
P_{\ell,j}(a+\tfrac{b+j}{m}
,k+\lceil
\tfrac{n-j}{m}\rceil)
(1-\z_m^{b+j}t^{\frac{1}{m}})^{-j}
(q^m-1)^{\ell}\\
&\=
\sum_{\ell=0}^\infty
\sum_{j=0}^{\ell}
P_{\ell,j,m,b,n}(a,k,t)
(1-t)^{k-j}
(q-\z_m)^{\ell}\,.
\end{aligned}
\ee
Since, for all $k$ we find that $(q^{am+b}t^{\frac{1}{m}};q)_{km+n}$ has no poles
at $t=1$ and $P_{\ell,j,m,b,n}(a,k,t)$ has $t$-degree less than $1$, we again
see that $P_{\ell,j,m,b,n}(a,k,t)$ must be divisible by $k(k-1)\cdots (k-j+1)$.
\end{proof}

We also need a version of the above lemma when $t=1$.

\begin{lemma}
\label{lem.poch.t1}
For fixed $m\in\BZ_{>0}$ and $n\in\{0,1,\dots,m-1\}$, $k\in\BZ_{\geq0}$, and
$a,b\in\BZ$, 
there exist $D_{m,n,b}(\z_m,k,a,q-\z_m)\in\BQ[\z_m,k,a][\![q-\z_m]\!]$
such that
\be
(q^{am+b};q)_{km+n}
\=
(-1)^km^{2k}(q-\z_m)^k
\frac{\Gamma(a+k+\lceil\tfrac{b+n}{m}\rceil)}{\Gamma(a+\lceil\tfrac{b}{m}\rceil)}
D_{m,n,b}(\z_m,k,a,q-\z_m)\,,
\ee
and
\be
D_{m,n,b}(\z_m,k,a,0)
\=
(\z_m^{b};\z_m)_n\,.
\ee
\end{lemma}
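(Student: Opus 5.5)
We prove Lemma~\ref{lem.poch.t1} by following the proof of Lemma~\ref{lem:pochhammers}, applied to the $t=1$ specialization. The difference is that the factor $(1-\z_m^{b+j}t^{1/m})$ which previously produced $(1-t)^k$ now vanishes exactly for the one residue $j$ with $\z_m^{b+j}=1$. That factor must be handled by a Gamma-function computation instead of the $\Li$-expansion.

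\textbf{Splitting by residues.} First apply the congruence identity~\eqref{qpe2} together with~\eqref{qpe1} to write
\be
(q^{am+b};q)_{km+n}\=\prod_{j=0}^{m-1}(q^{am+b+j};q^m)_{k+\lceil\frac{n-j}{m}\rceil}\,.
\ee
We split the factors according to whether $m\mid b+j$. For the $m-1$ indices with $\z_m^{b+j}\neq1$, the base $q^{am+b+j}$ tends to a nontrivial root of unity as $q\to\z_m$. Lemma~\ref{lem.asy}, used with $q^m=e^{m\hbar}$ and $t$ specialized to $\z_m^{b+j}$, then shows that each such factor equals
\be
(1-\z_m^{b+j})^{k}\,\big(1-\z_m^{b+j}\big)^{\lceil\frac{n-j}{m}\rceil}\,\bigl(1+O(q-\z_m)\bigr)\,.
\ee
The higher coefficients are polynomials in $k$ and $a$, because $\Li_{2-\ell}(\z_m^{b+j})$ is finite for $\ell\geq 2$ and the Bernoulli polynomials are polynomial in their arguments. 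Multiplying over these $j$ uses the identity $\prod_{c\not\equiv0}(1-\z_m^c)=m$. This produces $m^k$ times an element of $\BQ[\z_m,k,a][\![q-\z_m]\!]$, and its constant term, combined with the leftover factors, assembles into $(\z_m^b;\z_m)_n$ up to the part coming from the special residue.

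\textbf{The special residue.} For the unique $j_0$ with $b+j_0=mc$, where $c=\lceil b/m\rceil$, the factor is $(q^{m(a+c)};q^m)_{k'}$ with $k'=k+\lceil\frac{n-j_0}{m}\rceil$. This is a $q^m$-Pochhammer of the form $(Q^{\alpha};Q)_{k'}$ with $Q=q^m\to1$. Write $1-Q^{\alpha+i}=(1-Q)[\alpha+i]_Q$ and note that
\be
1-q^m\=-m\z_m^{-1}(q-\z_m)\bigl(1+O(q-\z_m)\bigr)\,.
\ee
Expanding $\prod_i[\alpha+i]_Q/(\alpha+i)$ via $\log[x]_Q-\log x=\sum_\ell c_\ell x^\ell(\log Q)^\ell$ yields power sums in $i$. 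These are polynomials in $k,a$, so this factor equals
\be
(-1)^{k'}m^{k'}(q-\z_m)^{k'}\frac{\Gamma(a+c+k')}{\Gamma(a+c)}\bigl(1+O(q-\z_m)\bigr)
\ee
with polynomial coefficients. One checks that $c+\lceil\frac{n-j_0}{m}\rceil=\lceil\frac{b+n}{m}\rceil$, which is exactly the Gamma ratio in the statement. The stray $\z_m^{-k}$ and the extra powers coming from $\lceil\frac{n-j_0}{m}\rceil\in\{0,1\}$ are absorbed into $D_{m,n,b}$. In the case $k'=k+1$, the extra factor $(q-\z_m)$ is absorbed as well. This is consistent with $(\z_m^b;\z_m)_n=0$ exactly when $n>j_0$, which matches the required constant term.

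\textbf{Main obstacle.} The hard part is the bookkeeping needed to make $D_{m,n,b}$ a series whose coefficients are genuinely polynomial in $(k,a)$, uniformly in $k$. This requires checking that the prefactor has exactly the form $(-1)^km^{2k}(q-\z_m)^k\,\Gamma(\cdots)/\Gamma(\cdots)$. The constants must be tracked carefully: $m^k$ comes from the nontrivial residues and $m^k$ from $1-q^m$, and together these give the stated $m^{2k}$. The $\z_m$-powers must also be gathered so that the remaining constant term is exactly $(\z_m^b;\z_m)_n$. I expect to settle this by comparing with the exact finite product at small $k$. Polynomiality then follows, as in the proof of Lemma~\ref{lem:pochhammers}, from the fact that Bernoulli-polynomial differences $B_\ell(x+k)-B_\ell(x)$ are polynomial in $k$.
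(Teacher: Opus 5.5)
\textbf{The gap is the step ``the stray $\z_m^{-k}$ \dots\ are absorbed into $D_{m,n,b}$.''}

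Your expansion $1-q^m=-m\z_m^{-1}(q-\z_m)(1+O(q-\z_m))$ is correct. It gives the special residue the prefactor $(-m\z_m^{-1}(q-\z_m))^{k'}$. The nontrivial residues cannot compensate for the $\z_m^{-k}$: their constant terms multiply to $m^k\prod_{j\neq j_0,\,j<n}(1-\z_m^{b+j})$, which contains no power of $\z_m^{k}$.

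But $\z_m^{-k}$ cannot be moved into $D$. It is not a polynomial in $k$. Moreover, when $n\le j_0$ it would make the constant term $\z_m^{-k}(\z_m^b;\z_m)_n$ rather than $(\z_m^b;\z_m)_n$.

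In fact the lemma as literally written is false. Take $m=2$, $a=1$, $b=n=0$, $k=1$. Then $(q^2;q)_2=(1-q^2)(1-q^3)=4(q+1)+O((q+1)^2)$. The right-hand side, however, is $-4(q+1)D$ with $D(0)=1$. So the small-$k$ comparison you planned in your last paragraph would expose this discrepancy rather than settle it.

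\textbf{What is true, and how your argument compares with the paper's.} The paper's proof sets $q=\z_m e^{\hbar}$ and obtains the prefactor $(-\hbar)^k m^{2k}$ before any conversion to $q-\z_m$. So what it actually establishes is the identity with $(q-\z_m)^k$ replaced by $\hbar^k$. Equivalently, keep $(q-\z_m)^k$ but replace $(-1)^k$ by $(-\z_m^{-1})^k$. Indeed $\hbar^k=\z_m^{-k}(q-\z_m)^k(1+O(q-\z_m))^k$, and only the last factor, whose coefficients are polynomial in $k$, may go into $D$.

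With that correction your argument is sound, and it takes a different route from the paper's:
\begin{itemize}
\item The paper writes the finite product as $(q^{am+b};q)_\infty/(q^{(a+k)m+b+n};q)_\infty$. It then imports the Gamma-function asymptotics of $(q^b;q)_\infty$ near $\z_m$ from \cite[Thm.3.2]{Ban}.
\item You stay with finite products and need Lemma~\ref{lem.asy} only at $t=\z_m^{b+j}\neq 1$. You then obtain both the Gamma ratio and the polynomial dependence on $(k,a)$ elementarily, from power sums of $\log([x]_Q/x)$. This is more self-contained.
\end{itemize}

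One small slip in that step: $\log[x]_Q-\log x=\sum_{\ell\geq1}c_\ell(x^\ell-1)(\log Q)^\ell$, where the $-1$ comes from the denominator $1-Q$. This does not affect polynomiality.
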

\begin{proof}
Suppose that $q=\z_me^\hbar$.
Then, from Equations~\eqref{qpe2} and~\eqref{qaxinf} and~\cite[Thm.3.2]{Ban}
we find that
\be
\label{eq:qpinfm}
\begin{aligned}
  &(q^b;q)_\infty
  \;\sim\;
  \sqrt{2\pi}\frac{(-m\hbar)^{\frac{1}{2}-\lceil
      \frac{b}{m}\rceil}}{\Gamma(\lceil\tfrac{b}{m}\rceil)}
  \prod_{\langle-b\rangle_m\neq j=0}^{m-1}
  (1-\z_m^{b+j})^{\frac{1}{2}-\frac{b+j}{m}}
  \exp\Big(\frac{\pi^2}{6m\hbar}\\
  &-\sum_{\ell=2}^\infty
  B_{\ell}(\lceil\tfrac{b}{m}\rceil)
  \frac{B_{\ell-1}(1)}{\ell-1}\frac{(m\hbar)^{\ell-1}}{\ell!}+\!\!\!\!\!\!
  \sum_{\langle-b\rangle_m\neq j=0}^{m-1}
  \sum_{\ell=2}^{\infty}\frac{B_{\ell}(\frac{b+j}{m})}{\ell!}
  \Li_{2-\ell}(\z_m^{b+j})(m\hbar)^{\ell-1}\Big) \,.
\end{aligned}
\ee
Hence, we have
\be
\begin{aligned}
  (q^{am+b};q)_{km+n}
  &\=
  \frac{(q^{am+b};q)_\infty}{(q^{(a+k)m+b+n};q)_{\infty}}\\
  &\;\in\;
  m^{2k+\lceil\frac{b+n}{m}\rceil-\lceil\frac{b}{m}\rceil}
  (-\hbar)^{k+\lceil\tfrac{b+n}{m}\rceil-\lceil\frac{b}{m}\rceil}
  \frac{\Gamma(a+k+\lceil\frac{b+n}{m}\rceil)}{\Gamma(a+\lceil\tfrac{b}{m}\rceil)}\\
  &\qquad\qquad\times
  \bigg(
  \prod_{\langle-b\rangle_m\neq j=0}^{n-1}
  (1-\z_m^{b+j})
  +\hbar\BQ[\z_m,k,a][\![\hbar]\!]\bigg)\,,
\end{aligned}
\ee
where we use the equality
\be
  \prod_{\langle-b\rangle_m\neq j=0}^{m-1}(1-\z_m^{b+j})
  \=m\,.
\ee
\end{proof}

\begin{corollary}
\label{cor.binom}
For fixed $m\in\BZ_{>0}$, $b\in\BZ$, and $n\in\{0,1,\dots,m-1\}$, we have
\be
  \bigg[\!\!\begin{array}{c}mk+b\\
  m\ell+n\end{array}\!\!\bigg]_{\!q}
  \;\in\;
  \binom{k+\lfloor\tfrac{b-n}{m}\rfloor}{\ell}
  \bigg(
  \bigg[\!\!\begin{array}{c}\langle b\rangle_m\\
  n\end{array}\!\!\bigg]_{\!\z_m}
  +\hbar\BQ[\z_m,k,\ell][\![\hbar]\!]
  \bigg)\,.
\ee
\end{corollary}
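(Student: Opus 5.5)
Write $q=\z_m e^{\hbar}$ and express the $q$-binomial as a ratio of finite $q$-Pochhammer symbols,
\[
\bigg[\!\!\begin{array}{c}mk+b\\ m\ell+n\end{array}\!\!\bigg]_{\!q}
\=\frac{(q;q)_{mk+b}}{(q;q)_{m\ell+n}\,(q;q)_{m(k-\ell)+b-n}}\,,
\]
and expand each factor with Lemma~\ref{lem.poch.t1}, taking $a=0$ and $b=1$ in the notation of that lemma. To apply the lemma, every index must be written in the form $m\kappa+\nu$ with $0\leq\nu<m$. So we write $b=m\lfloor b/m\rfloor+\langle b\rangle_m$, and $b-n=m\lfloor\tfrac{b-n}{m}\rfloor+\langle b-n\rangle_m$. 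The first factor then has index $m(k+\lfloor b/m\rfloor)+\langle b\rangle_m$, the second $m\ell+n$, and the third $m(k-\ell+\lfloor\tfrac{b-n}{m}\rfloor)+\langle b-n\rangle_m$.

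Lemma~\ref{lem.poch.t1} gives each factor as $(-1)^\kappa m^{2\kappa}(q-\z_m)^\kappa\,\Gamma(\kappa+\lceil\tfrac{1+\nu}{m}\rceil)/\Gamma(1)$ times a series whose constant term is $(\z_m;\z_m)_\nu$. Since $0\leq\nu<m$ we have $\lceil\tfrac{1+\nu}{m}\rceil=1$, so the Gamma ratio is simply $\kappa!$.

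The first bookkeeping step is the exponents of $(q-\z_m)$, $-1$ and $m^2$. Their total is $\kappa_1-\kappa_2-\kappa_3$, which equals $\lfloor b/m\rfloor-\lfloor\tfrac{b-n}{m}\rfloor$. This vanishes when $\langle b\rangle_m\geq n$ and equals $1$ otherwise.

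\begin{itemize}
\item In the case $\langle b\rangle_m\geq n$, the factorials combine to $\binom{k+\lfloor (b-n)/m\rfloor}{\ell}$, because $\lfloor b/m\rfloor=\lfloor (b-n)/m\rfloor$. The leading constant is $(\z_m;\z_m)_{\langle b\rangle_m}/\big((\z_m;\z_m)_n(\z_m;\z_m)_{\langle b\rangle_m-n}\big)=\big[\begin{smallmatrix}\langle b\rangle_m\\ n\end{smallmatrix}\big]_{\z_m}$, as required.
\item In the case $\langle b\rangle_m<n$, there is one extra power of $\hbar$ (up to a unit). The target constant $\big[\begin{smallmatrix}\langle b\rangle_m\\ n\end{smallmatrix}\big]_{\z_m}$ is $0$ by the convention on binomials, so it suffices to show that the whole expression lies in $\binom{k+\lfloor(b-n)/m\rfloor}{\ell}\hbar\BQ[\z_m,k,\ell][\![\hbar]\!]$. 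The factorials give $(k+\lfloor b/m\rfloor)!/\big(\ell!\,(k-\ell+\lfloor b/m\rfloor-1)!\big)$. This is $(k+\lfloor b/m\rfloor)\binom{k+\lfloor b/m\rfloor-1}{\ell}$, a polynomial in $k$ times the required binomial.
\end{itemize}

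It remains to check that the higher-order coefficients are polynomial in $k,\ell$. Lemma~\ref{lem.poch.t1} states that the series $D$ has coefficients in $\BQ[\z_m,\kappa,a]$, and each $\kappa$ is affine in $k,\ell$. The only nonpolynomial danger is dividing by the two series coming from the denominator factors. Their constant terms $(\z_m;\z_m)_\nu$ with $\nu<m$ are nonzero, hence invertible in $\BQ(\z_m)$, so the inverses are again series with polynomial coefficients.

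The main obstacle I expect is uniformity: the conversion between $(q-\z_m)$ and $\hbar$, and the sign and $m^{2\kappa}$ factors, must cancel exactly. In the first case they do, since the exponents sum to zero. In the second case the single leftover factor $-m^2(q-\z_m)=-m^2\z_m\hbar(1+O(\hbar))$ is a legitimate element of $\hbar\BQ[\z_m][\![\hbar]\!]$. Finally, I would double-check the edge cases where some $\kappa$ is negative, for instance $\ell>k+\lfloor(b-n)/m\rfloor$. There both sides should vanish by the convention, and I would confirm this by noting that the left-hand $q$-binomial is $0$ in exactly those cases.
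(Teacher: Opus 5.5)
Your proof is correct and is essentially the paper's argument. Both expand via Lemma~\ref{lem.poch.t1} and track $\lfloor b/m\rfloor-\lfloor\tfrac{b-n}{m}\rfloor\in\{0,1\}$; the only difference is that you split the $q$-binomial into three $(q;q)$-factorials (so you need the lemma only with $a=0$, $b=1$), whereas the paper writes it as $(q^{m(k-\ell)+b-n+1};q)_{m\ell+n}/(q;q)_{m\ell+n}$ and expands numerator and denominator.

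Since the net power of $q-\z_m$ in your computation is $0$ or $1$, it also does not matter that the lemma is stated with $(q-\z_m)^k$ while its proof actually produces $\hbar^k$ (these differ by $\z_m^k(1+O(\hbar))$).
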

\begin{proof}
This follows from the fact that
\be
\begin{small}
\begin{aligned}
  &\bigg[\!\!\begin{array}{c}mk+b\\
  m\ell+n\end{array}\!\!\bigg]_{\!q}
  \=
  \frac{(q^{m(k-\ell)+b-n+1};q)_{m\ell+n}}{(q;q)_{m\ell+n}}\\
  &\;\in\;
  \frac{(k+\lfloor\frac{b}{m}\rfloor)!}{\ell!(k-\ell+\lfloor\frac{b-n}{m}\rfloor)!}
  (-m\hbar)^{\lfloor\frac{b}{m}\rfloor-\lfloor\frac{b-n}{m}\rfloor}
  \bigg(\frac{\prod_{\langle n-b-1\rangle_m\neq j=0}^{n-1}
  (1-\z_m^{b-n+1+j})}{(\z_m;\z_m)_n}
  +\hbar\BQ[\z_m,k,\ell][\![\hbar]\!]\bigg)\\
  &\=
  \binom{k+\lfloor\tfrac{b-n}{m}\rfloor}{\ell}((k+\lfloor\tfrac{b}{m}\rfloor)
  (-m\hbar))^{\lfloor\frac{b}{m}\rfloor-\lfloor\frac{b-n}{m}\rfloor}
  \bigg(\frac{\prod_{\langle n-b-1\rangle_m\neq j=0}^{n-1}
  (1-\z_m^{b-n+1+j})}{(\z_m;\z_m)_n}
  +\hbar\BQ[\z_m,k,\ell][\![\hbar]\!]\bigg).
\end{aligned}
\end{small}
\ee
\end{proof}
  
The methods developed in~\cite{GSWZ,GW:explicit} can then be applied to prove
Theorem~\ref{thm.1.def}.

\begin{proof}[Proof of Theorem~\ref{thm.1.def}]
Lemmas~\ref{lem:pochhammers}~and~\ref{lem.poch.t1} and Corollary~\ref{cor.binom},
imply that
\be
\begin{aligned}
&f_{A,B,\nu,\ve}(x^{1/m},t^{1/m},\z_m+q-\z_m)\\
&\=
\sum_{k\in\BZ_{\geq0}^N}
\sum_{n\in(\BZ_{\geq0}\cap[0,m))^N}
q^{\frac{1}{2}Q(mk+n)-\nu(mk+n)}
\prod_{i=1}^N(-\ve_iq^{\frac{-\ve_i-1}{4}}t^{-\frac{1}{2m}})^{mk_i+n_i}
\bigg[\!\!\begin{array}{c} (mA^{t}k+A^{t}n)_i \\
k_i\end{array}\!\!\bigg]_{q}\\
&\qquad\qquad\times(q^{-(m B^{t} k+B^{t} n)_i}t;q^{-1})_{k_i}
t^{-\frac{\ve_i}{2} ((A^{t}k+A^{t}\frac{n}{m})_i + (B^{t}k+B^{t}\frac{n}{m})_i
  +\frac{1}{m})} x_i^{k_i+\frac{n_i}{m}}\\
\\
&\;\in\;
\sum_{n\in(\BZ_{\geq0}\cap[0,m))^N}
\sum_{k\in\BZ_{\geq0}^N}
\Bigg(\z_m^{\frac{1}{2}Q(n)-\nu(n)}
\prod_{i=1}^N(-\ve_i\z_m^{\frac{-\ve_i-1}{4}}t^{-\frac{1}{2m}})^{n_i}
\bigg[\!\!\begin{array}{c} \langle(A^{t}n)_i\rangle_m \\
n_i\end{array}\!\!\bigg]_{\z_m}
z_i^{\lfloor\frac{(A^{t}n)_i-n_i}{m}\rfloor}\\
&\times(\z_m^{-(B^{t} n)_i}t^{\frac{1}{m}};\z_m^{-1})_{n_i}
t^{-\frac{\ve_i}{2m} ((A^{t}n)_i + (B^{t}n)_i+1)} x_i^{\frac{n_i}{m}}
+(q-\z_m)\BQ[\z_m,t^{1/m},x^{1/m},k,\partial_t][\![q-\z_m]\!]\Bigg)\\
&\times\Bigg(\prod_{i=1}^N(-\ve_it^{-\frac{1}{2}})^{k_i}
\binom{(A^{t}k)_i+\lfloor\frac{(A^{t}n)_i-n_i}{m}\rfloor}{k_i}
(1-t)^{k_i}
t^{-\frac{\ve_i}{2}(A^{t}k+B^{t}k)_i} x_i^{k_i}\Bigg)\\
&\subseteq
R\otimes\BQ[x^{1/m},t^{1/m},\z_m][\![q-\z_m]\!]\,,
\end{aligned}
\ee
where we used the fact that $\diag(Q)\equiv(\ve+1)/2\pmod{2}$ and
$(-1)^{mk}q^{mk(mk+1)/2}=(-1)^k+\mathrm{O}(q-\z_m)$, Theorem~\ref{thm:lag.inv} and
the fact that the polynomials in $k$ induce $x$-derivatives of $z^s/\d$, which
are elements of $R$.

Since $f_{A,B,\nu,\ve}(x,t,q) \in \BZ[t^{\pm 1}, q^{\pm 1}][\![x]\!]$, it follows that
the coefficients of the powers of $q-\z_m$ in the series 
$f_{A,B,\nu,\ve}(x^{1/m},t^{1/m},\z_m+q-\z_m)$ are in
$\BZ[\z_m,t^{\pm1/m}][\![x^{1/m}]\!]$. On the other hand, we just showed they are
elements of $R\otimes\BQ[x^{1/m},t^{1/m},\z_m]$.
However,
\be
\BZ[\z_m,t^{\pm1/m}][\![x^{1/m}]\!] \cap R\otimes\BQ[x^{1/m},t^{1/m},\z_m]
\subset R[x^{1/m},t^{1/m},\z_m] \,.
\ee
It follows that 
$f_{A,B,\nu,\ve}(x^{1/m},t^{1/m},\z_m+q-\z_m) \in R[x^{1/m},t^{1/m},\z_m][\![q-\z_m[\!]$
defines a collection of series at roots of unity.
Since the coefficients in the power series expansion around $x=0$ are polynomials
in $\BZ[q^{\pm1},t^{\pm1}]$ and hence glue, we see that the collection
$f_{A,B,\nu,\ve}(x^{1/m},t^{1/m},\z_m+q-\z_m)$ also glues with a Frobenius twist.
\end{proof}

\subsection{Final proofs}
\label{sub.final}

In this last section, we give the proof of our main theorem and its corollaries.

\begin{proof}[Proof of Theorem~\ref{thm.1}]
The existence part of the map $J$ follows from Theorem~\ref{thm.1.def} and
Lemma~\ref{lem:def2A}. The topological invariance of the map $J$ follows from the
fact that $J(t,q)$ is uniquely determined by its evaluation $J(q^{n-1},q)$ for
$n \geq 1$, as follows from the injections
\be
\label{injects}
\calH_{\BZ[t^{\pm 1}, \Delta(t)^{-1}]/\BZ[t^{\pm 1}]} 
\hookrightarrow \BZ[t^{\pm 1}, \Delta(t)^{-1}][\![q-1]\!]
\stackrel{t \mapsto q^n}{\hookrightarrow} \BQ[n][\![q-1]\!] \,.
\ee
This concludes the proof of Theorem~\ref{thm.1}.
\end{proof}

Corollary~\ref{cor.cyclo} is a consequence of Lemma~\ref{lem.unique},
applied to $f(qt,q)$
instead of $f(t,q)$. 

\begin{proof}[Proof of Corollary~\ref{cor.hab}]
Since $f_{K,1}(t,q-1)$ and $J^\loop_K(t,q)$ are uniquely determined by
their evaluation at $t=q^{n-1}$ for $n>0$, and the latter agree, it follows
that $f_{K,1}(t,q-1) = J^\loop_K(t,q)$.
\end{proof}

\begin{proof}[Proof of Corollary~\ref{cor.lap1}]
First, we show that the maps~\eqref{Omdef} are well-defined. This follows from
the fact that the composite maps
\be
\calL_{\pm 1} :   \calH_{\BZ[t^{\pm 1}, \Delta(t)^{-1}]/\BZ[t^{\pm 1}]}
\hookrightarrow
\varprojlim_n
\calH_{\BZ}[t^{\pm1}]/(t^{-1};q)_n 
\to \calH_\BZ
\ee
are well defined where the map on the left is given in~\eqref{eq.unique} and
the one on the right is given in~\cite[Sec.3.1]{BCL}. 
The top map of the diagram~\eqref{cd2} is the one of Theorem~\ref{thm.1}.
The left vertical map is the Dehn-filling map.
The existence of the bottom horizontal map was shown in~\cite{BCL}, where also
the commutativity of the diagram was proven along the way. 
\end{proof}

\begin{proof}[Proof of Corollary~\ref{cor.0surgery}]
This follow from the commutativity of the outer diagram below and the injectivity of the map
$\calH_{\calO_\BK[\mathrm{disc}(\BK)^{-1}]/\BZ}\hookrightarrow \calO_\BK[\mathrm{disc}(\BK)^{-1}][\![q-1]\!]$.
\be
\begin{tikzpicture}
\draw(0,2) node {$\{\mathrm{Knots}\}$};
\draw(4,2) node {$\calH_{\BZ[t^{\pm 1}, \Delta(t)^{-1}]/\BZ[t^{\pm 1}]}$};
\draw(0,0) node {$\{M \,\, | \,\, H_1(M,\BZ)=\BZ \}$};
\draw(4,0) node {$\calH_{\calO_\BK[\mathrm{disc}(\BK)^{-1}]/\BZ}$};
\draw[->](1,2)--(2.2,2);
\draw(1.6,2.3) node {$J$};
\draw[->](0,1.5)--(0,0.5);
\draw(-0.6,1) node {$\mathrm{Dehn}_0$};
\draw[->](4,1.5)--(4,0.3);
\draw(4.3,1) node {$\calL_0$};
\draw[->](2.1,0)--(2.7,0);
\draw(4,-1) node {$\calO_\BK[\mathrm{disc}(\BK)^{-1}][\![q-1]\!]$};
\draw[->](0,-0.3)--(2.1,-1);
\draw[->](5.7,2) to[out angle = 0, in angle = 0, curve through={(6.2,1)}] (5.9,-1);
\draw(6.2,0.75) node[right] {$\calL_0$};
\draw[->](4,-0.3)--(4,-0.7);
\end{tikzpicture}
\ee
\end{proof}


\appendix

\section{The Burau representation and the Alexander polynomial}

In this appendix we discuss for completeness, some well-known facts about the
Burau representation in its original and reduced versions, and the Alexander
polynomial.

We follow the notation from Section~\ref{sub.q=1}. 
Taking the trace of $y\fb_r(\beta)$, where the first strand is colored by $\ell$
and the $i$-th strand for $i=2,\dots,r$ is colored by $\nu_i(k)+\ell$, gives 
\be
\label{app1}
\tr(S(y\fb_r(\beta)))
  \=
  \sum_{(\ell,k)\in\BZ_{\geq0}^{N+1}}
  \prod_{i=1}^N(-\ve_it^{-\frac{1}{2}})^{k_i}
  \binom{(A^{t}k)_i+\ell}{k_i}
  (1-t)^{k_i}
  t^{-\frac{\ve_i}{2} ((A^{t}k)_i + (B^{t}k)_i+2\ell)}
  y^{\nu(k)+r\ell}\,.
\ee
From MacMahon's theorem, it follows that
\be
\label{app2}
\tr(S(y\fb_r(\beta)))=\det(I-y\fb_r(\beta))^{-1} \,.
\ee
Notice that $\sum_{\ell=0}^\infty
\ell^dy^{r\ell}=\Li_{-d}(y^r)\in(1-y^r)^{-d-1}\BZ[y^r]$ is the $(-d)$-polylogarithm
that satisfies
\be
\label{app3}
  \underset{y=1}{\Res}\;
  y^c\,\Li_{-d}(ty^r)\;dy
  \=(-r)^{-1}\Big(-\frac{c+1}{r}\Big)^dt^{-\frac{c+1}{r}}\,.
\ee
This, together with~\eqref{app1} and~\eqref{app2} implies that
\be
\begin{aligned}
  &-r\;\underset{y=1}{\Res}
  \;\det(I-y\fb_r(\beta))^{-1}dy\\
  &\=
  \sum_{k\in\BZ_{\geq0}^N}
  \prod_{i=1}^N(-\ve_it^{-\frac{1}{2}})^{k_i}
  \binom{(A^{t}k)_i-\frac{\nu(k)+1}{r}}{k_i}
  (1-t)^k_i
  t^{-\frac{\ve_i}{2} ((A^{t}k)_i + (B^{t}k)_i-2\frac{\nu(k)+1}{r})} \,.
%
\end{aligned}
\ee
On the other hand, taking the trace over all but the $i$-th stand of the strand
equals to taking the trace of all but the first stand (as follows by Schur's lemma
of the corresponding state-sums). This and the definition of $f_{A,B,\nu,\ve}(t,1)$
from~\eqref{AB1} implies that for all $i=1,\dots,r$ we have:
\be
\begin{aligned}
  &\sum_{(\ell,k)\in\BZ_{\geq0}^{N+1}}^{\nu_i(k)+\ell\=\mathrm{const.}}
  \prod_{i=1}^N(-\ve_it^{-\frac{1}{2}})^{k_i}
  \binom{(A^{t}k)_i+\ell}{k_i}
  (1-t)^{k_i}
  t^{-\frac{\ve_i}{2} ((A^{t}k)_i + (B^{t}k)_i+2\ell)}\\
  &\=
  t^{i-\frac{r-1}{2}+\frac{1}{2}\sum_i\ve_i}f_{A,B,\nu,\ve}(t,1)\,.
\end{aligned}
\ee
Summing over $i=1,\dots,r$ it follows that for all integers $\mu_i \in \BZ$ for
$i=1,\dots,r$, and all $\ell \in \BZ$ we have
\be
\begin{aligned}
&\sum_{k\in\BZ_{\geq0}^N}
\prod_{i=1}^N(-\ve_it^{-\frac{1}{2}})^{k_i}
\binom{(A^{t}k)_i+\ell-\sum_{j=1}^{r-1}\mu_j\nu_j(k)}{k_i}
(1-t)^k_i
t^{-\frac{\ve_i}{2} ((A^{t}k)_i + (B^{t}k)_i+2\ell-2\sum_{j=1}^{r-1}\mu_j(\nu_j(k)))}\\
&\=
(1+\mu_1(t^{-1}-1)+\cdots+\mu_{r-1}(t^{1-r}-1))^{-1}\,t^{-\frac{r-1}{2}
  +\frac{1}{2}\sum_i\ve_i}\,f_{A,B,\nu,\ve}(t,1)\,,
\end{aligned}
\ee
Specializing to $\mu_i=1/r$ and $\ell=0$ and using~\eqref{A2} gives 
\be
\begin{aligned}
-\underset{y=1}{\Res}
\;\det(I-y\fb_r(\beta))^{-1}dy
&\=
(1+t+\cdots+t^{r-1})^{-1}
\,t^{\frac{r-1}{2}+\frac{1}{2}\sum_i\ve_i}\,f_{A,B,\nu,\ve}(t,1)\\
&\= (1+t+\cdots+t^{r-1})^{-1}
t^{\frac{r-1}{2}+\frac{1}{2}\sum_i\ve_i}
\Delta_K(t)^{-1}\\
&\=
\det(I-\fb_r^{\mathrm{red}}(\beta))^{-1}\,.
\end{aligned}
\ee


\bibliographystyle{plain}
\bibliography{biblio}

\end{document}